\numberwithin{equation}{section}
\theoremstyle{plain}
\theoremstyle{plain}
\newtheorem{lemma}{Lemma}[section]
\newtheorem{corollary}{Corollary}[section]
\newtheorem{proposition}{Proposition}[section]
\theoremstyle{definition}
\newtheorem{assumption}{Assumption}
\newtheorem{definition}{Definition}
\newtheorem{remark}{Remark}[section]
\theoremstyle{definition}
\newtheorem{assumpA}{Assumption}
\newtheorem{assumpLW}{Assumption}
\newtheorem*{assumption*}{Assumption}
\newtheorem*{proof*}{Proof of}
\renewcommand\tilde{\widetilde}
\renewcommand\hat{\widehat}
\renewcommand\bar{\overline}
\DeclareMathOperator{\ran}{range}
\DeclareMathOperator{\spn}{span}
\DeclareMathOperator{\rank}{rank}
\DeclareMathOperator{\sgn}{sgn}
\newcommand{\dn}{d_{\hspace{-0.025em}\scalebox{0.65}{\text{$N$}}}}
\newcommand{\ds}{d_{\hspace{-0.025em}\scalebox{0.65}{\text{$S$}}}}
\newcommand{\dnn}{\scalebox{0.65}{\text{$N$}}}
\newcommand{\dss}{\scalebox{0.65}{\text{$S$}}}
\DeclareRobustCommand\citepos
\def\NAT@nmfmt##1{{\NAT@up##1's}}%
\let\NAT@ctype\z@\NAT@partrue
\newcommand{\pto}{\stackrel{\to}{p}} \newcommand{\dto}{\stackrel{\to}{d}} 
\newcommand{\truc}[1]{\lfloor #1 \rfloor}
\def\dto{\underset{d}\rightarrow}
\def\pto{\underset{p}\rightarrow}
\newcommand{\argmax}{\operatornamewithlimits{argmax}}
\newcommand{\commWS}[1]{{\leavevmode\color{black}#1}}
\newcommand{\commHS}[1]{{\leavevmode\color{black}#1}}
\newcommand{\commRV}[1]{{\leavevmode\color{black}#1}}
\newcommand{\commRVV}[1]{{\leavevmode\color{black}#1}}
\newcommand{\Rlogo}{\protect\includegraphics[height=1.8ex,keepaspectratio]{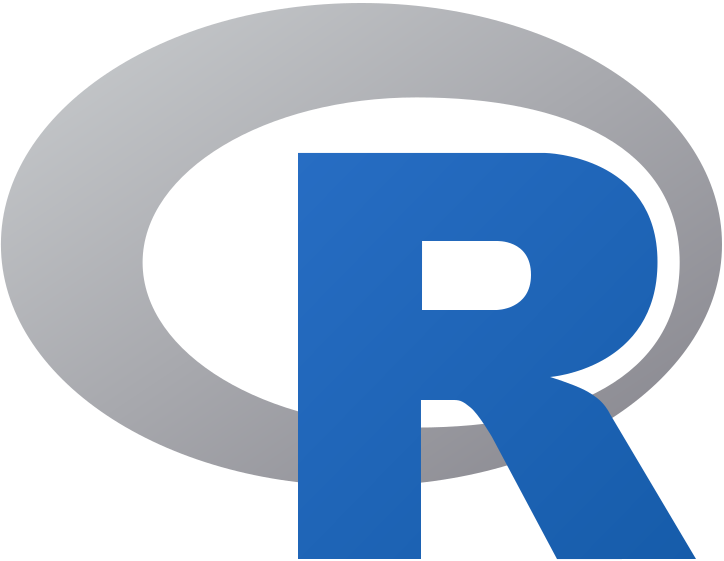}}
\begin{document}
	
	\begin{frontmatter}
		\title{Fractionally integrated curve time series with cointegration}
		\runtitle{Fractionally integrated curve time series}
		
		\begin{aug}
			\author{\fnms{Won-Ki} \snm{Seo}\thanksref{t1}\ead[label=e1]{won-ki.seo@sydney.edu.au}}
			\and
			\author{\fnms{Han Lin} \snm{Shang}\thanksref{t2}\ead[label=e2]{hanlin.shang@mq.edu.au}}
			
			\address{University of Sydney\\
				Macquarie University\\
				\printead{e1,e2}}
			
			
			
			\thankstext{t1}{Won-Ki Seo is the corresponding author. Data and \Rlogo\ code used in this paper are available at \url{https://github.com/hanshang/FCIFTS}. This paper is accepted at the \it{Electronic Journal of Statistics}. \normalfont The authors express deep appreciation to the editor and anonymous referees for their invaluable and insightful suggestion.}
			\thankstext{t2}{Shang's research is supported in part by Australian Research Council Discovery Project DP230102250}
			\runauthor{Seo and Shang}
			
		\end{aug}
		
		\begin{abstract}
			We introduce methods and theory for fractionally cointegrated curve time series. We develop a variance-ratio test to determine the dimensions associated with the nonstationary and stationary subspaces. For each subspace, we apply a local Whittle estimator to estimate the long-memory parameter and establish its consistency. A Monte Carlo study of finite-sample performance is included, along with two empirical applications.
		\end{abstract}
		
		\begin{keyword}[class=MSC]
			\kwd[Primary ]{37M10}
			\kwd[; secondary ]{62F03}
		\end{keyword}
		
		\begin{keyword}
			\kwd{fractional cointegration}
			\kwd{long memory time series}
			\kwd{functional data} 
			\kwd{functional principal component analysis}
			\kwd{limit theorems}
		\end{keyword}
		\tableofcontents
	\end{frontmatter}
	
	\section{Introduction}\label{sintro} 

	Functional time series analysis balances functional data and time series analyses. Similar to univariate and multivariate time series, a temporal dependence structure exists in functional observations. For example, intraday volatility functions are serially dependent and often exhibit long-memory features \citep{Lobato97}. Time series of airway pressure are used to monitor patients undergoing mechanical ventilation, with series exhibiting periodically strong dependence \citep{BNF+23}. 
	
In functional time series analysis, most studies assume stationarity over the short-range temporal dependence (see, e.g., \cite{BYZ10, Bosq2000, FV06,  Hoermann2010, Horvath2012, Horvath2014, Klepsch2017a, Laurini2014}). Only in recent years has there been some development on long-memory functional time series models (see, e.g., \cite{Li2020a, Li2021, Li2020, RP20}). The long-memory functional time series describes processes with greater persistence than short-range dependent ones, such that, in the stationary case, autocovariances decay very slowly, and the spectral density is unbounded, especially at frequency zero. While \cite{Li2020a, Li2021} consider inference and estimation of a long-memory parameter in stationary curve time series, \cite{Li2020} studies inferential results for nonstationary curve time series. Based on the mean squared error, \cite{Shang20, Shang22} evaluate and compare various long-memory parameter estimators for stationary and nonstationary curve time series, respectively. In these comparisons, the local Whittle estimator of \cite{robinson1995gaussian} is recommended. While \cite{Li2021} presents the asymptotic properties of the local Whittle estimator, \cite{Shang22b} applies a sieve bootstrap method from \cite{Paparoditis18} to nonparametrically construct the confidence intervals of the memory parameter.
	
	Given the recent surge of interest in functional time series analysis, cointegration methods have been extended to a functional time series setting by \cite{Chang2016}. They define cointegration for curve time series and develop statistical methods based on functional principal component analysis. \commRVV{\cite{Beare2017b} and \cite{Seo2019} provide suitable notions of cointegration in a Hilbert space and a Bayes Hilbert space, respectively. \cite{Beare2017b} and \cite{Beare2020} study a general solution to a functional autoregressive law of motion allowing for nonstationarity and cointegration and present extensions of the Granger-Johansen representation theorem; see also \cite{Franchi2020}, \cite{Seo2022}, and \cite{Seo2023} for similar theoretical results in a more general setting.}
	
	We study a fractionally cointegrated curve time series by developing inferential and estimation methods for such a time series. The curve time series consists of nonstationary and stationary components. For each component, we estimate the long-memory parameter via the local Whittle estimator. Through a variance-ratio test, we determine the subspaces spanned by nonstationary and stationary components. We compare our method with the existing one based on the eigenvalue-ratio estimator by \cite{Li2020a}. In addition, this paper develops statistical methods for the case when the stationary component can further be decomposed into the long-memory and short-memory components. 
	
	In Section~\ref{sec:2}, we present our notations and preliminaries. In Section~\ref{sec_frac}, we introduce the fractionally cointegrated functional time series. The estimation procedure is given in Section~\ref{sec_stat}. Illustrated by a series of simulation studies in Section~\ref{sec_simulation}, we evaluate the estimation accuracy of the proposed method and compare the result with \cite{Li2020}. The empirical performance of our proposed method is also validated through application to a Swedish human mortality data set in Section~\ref{sec:mortality} and a Canadian yield curve data set in Section~\ref{sec_yield}. In Section~\ref{sec:conclude}, we conclude and present some ideas on how the methodology can be further extended.

\section{Preliminaries}\label{sec:2}
	
In the subsequent discussion, we assume that the curve-valued time series $Z_t$ of interest takes values in the Hilbert space $\mathcal H$ of square-integrable functions defined on $[0,1]$. We let $\langle h_1,h_2 \rangle$ denote the inner product of $h_1,h_2 \in \mathcal H$, and then let $\|h\|$ denote the norm of $h \in \mathcal H$, which is defined by $\langle h,h \rangle^{1/2}$.   Given a set $G \subseteq \mathcal H$, we let $G^\perp$ denote the orthogonal complement to $G$.	We denote by $\mathcal L_{\mathcal H}$ the space of bounded linear operators acting on $\mathcal H$ equipped with the uniform norm $\|A\|_{\mathcal L_{\mathcal H}}=\sup_{\|h\|\leq1}\|Ah\|$. The adjoint $A^\ast$ of a linear operator $A\in\mathcal L_{\mathcal H}$ is the unique linear operator satisfying \(\langle Ah_1,h_2 \rangle=\langle h_1,A^\ast h_2 \rangle\) for all $h_1,h_2\in \mathcal H$. We will say that an operator $A\in\mathcal L_{\mathcal H}$ is nonnegative (resp.\ positive) definite if \(\langle Ah,h\rangle \geq 0\) (resp.\ \(\langle Ah,h\rangle > 0\)) for all nonzero \(h\in \mathcal H\). We let $\otimes$ denote the tensor product of elements in $\mathcal H$, that is, $h_1 \otimes h_2(\cdot)$ denotes the linear map given by $\langle h_1, \cdot \rangle h_2$ for any $h_1,h_2\in \mathcal H$. We let the range of $A\in \mathcal L_{\mathcal H}$ be denoted by \(\ran A\). The dimension of $\ran A$ is called the rank of $A$, denoted by $\rank A$. We will consider convergence of a sequence of random bounded linear operators as the sample size $T$ tends to infinity. \phantomsection\label{rvlabel5}\commRV{For such a sequence of operators $A_j$, we write $A_j \pto A$ to denote the convergence in probability of $A_j$ to $A$ with respect to the uniform operator norm, that is, 
$$A_j \pto A \quad \Leftrightarrow  \quad \|A_j-A\|_{\mathcal L_{\mathcal H}} \pto 0.$$}
	
We define the I$(d)$ property of a time series, taking values in $\mathcal H$. As a crucial building block, we first introduce the I$(0)$ property adopted from \cite{Beare2017b}.  
\begin{definition}[I$(0)$-ness] \label{defi0}
The time series $X_t$ taking values in $\mathcal H$ is said to be I$(0)$, and denoted $X_t \in$ I$(0)$, if 
\begin{inparaenum}
\item[(i)] it allows the representation
\begin{equation*}
X_t = \sum_{j=0}^\infty \psi_j \varepsilon_{t-j}, 
\end{equation*}
where $\psi_j \in \mathcal L_{\mathcal H}$ for all $j$ and $\varepsilon_t$ is an i.i.d.\ sequence with \commWS{$\mathbb{E}\varepsilon_t = 0$} and positive definite covariance \commWS{$C_{\varepsilon}$, defined as $\mathbb{E}[\varepsilon_t\otimes \varepsilon_t]$}, and 
\item[(ii)] $\sum_{j=0}^{\infty} \psi_j$ is a nonzero element in $\mathcal L_{\mathcal H}$.
\end{inparaenum} 
\end{definition}
The I$(0)$ time series is necessarily stationary, and its long-run covariance $\sum_{s=-\infty}^\infty  \mathbb{E}[X_{t-s}\otimes X_t]$ is a well-defined bounded linear operator (see, e.g., \cite{Beare2017b}). \phantomsection\label{rv203}In this paper, an I(0) time series is also referred to as a short-range dependent (SRD) process, as in the literature (see, e.g., \cite{Li2020a}). Based on the I$(0)$ property, we define $\mathcal H$-valued I$(d)$ processes, which will subsequently be considered as follows: 
	\begin{definition}[I$(d)$-ness] \label{def1}
		\commRV{For $d \geq 0$, the time series $Y_t$ is said to be I$(d)$ (or equivalently, fractionally integrated of order $d$), and denoted $Y_t \in$ I$(d)$,} \commRVV{if  $\Delta^d Y_t$ is I$(0)$}, where  $\Delta^{d}$ is a power series of the lag operator defined by 
		\begin{equation*}
			\Delta^{d} = \sum_{j=0}^{\infty} \frac{\Gamma(j-d)}{\Gamma(-d)\Gamma(j+1)} L^j.
		\end{equation*}
	\end{definition}
	\commRVV{Cointegration is a property of a multivariate nonstationary dynamic system, indicating the existence of a stationary linear combination of the variables in the system. Cointegration (or the so-called cointegrating relationship) between the variables is often interpreted as their stable long-run dynamic relationship, and various empirical examples have been proposed, especially in studies of macroeconomic dynamics. The notion of cointegration has been recently extended to and studied in a Hilbert space setting by \cite{Beare2017b}, \cite{Li2020}, \cite{Nielsen2019}, \cite{Nielsen2023},  \cite{Seo2022} and \cite{Seo2020}.}  Extending these former notions of cointegration, we may define fractional cointegration in $\mathcal H$ as follows: 
	\begin{definition}[Fractional cointegration] \label{deffc}
	Suppose that  $Y_t\in$ I$(\dn)$ and there exists a projection $\mathcal P$ such that $\mathcal PY_t \in$ I$(\ds)$ for some $\dn\geq 1/2$,  $\ds\in [0,1/2)$. We then say that $Y_t$ is (fractionally) cointegrated and call $v \in \ran \mathcal P$ a (fractional) cointegrating vector.
	\end{definition}
	If $\dn \geq 1/2$, an I$(\dn)$ time series $Y_t$ taking values in $\mathcal H$ is nonstationary. \commRV{However, given that we can find a subspace $\ran \mathcal P \subset \mathcal H$ such that $\mathcal PY_t$ is stationary process, Definition~\ref{deffc} suitably extends the conventional notion of cointegration in a Euclidean space.}
	\section{Fractionally cointegrated functional time series} \label{sec_frac}
	
	We consider modeling nonstationary dependent curve-valued observations but exhibiting stable long-run linear relationships as fractionally cointegrated time series. A potential example of such a time series may be yield curves over time; it turns out that this time series tends to evolve like a nonstationary process \commRVV{(see the working paper version of \citep{Li2020} and \citep{Nielsen2023})}, but due to \commWS{the expectations hypothesis}, many linear functionals of such time series are expected not to exhibit nonstationarity; \phantomsection\label{rv201} see e.g. 
\cite{GT18}, \cite{HAG92} and \cite[Section 6]{Nielsen2010}. 
	
	Even if Definition~\ref{deffc} gives us a proper notion of $\mathcal H$-valued time series allowing fractional cointegration, the definition itself is, of course, not sufficient for the inferential methods to be developed. For statistical analysis, we employ the following assumptions for the observed time series $Z_t$:
	\begin{assumption}\label{assum1} 
		The observed time series $Z_t$, taking values in $\mathcal H$, satisfies the following:
		\begin{enumerate}[(a)] 
			\item \label{assum1a} $Z_t = \mu + Y_t$ for some $\mu \in \mathcal H$.
			\item \label{assum1b}  For some $\dn \in (1/2,3/2)$, $\ds \in [0,1/2)$, there exists an orthogonal projection $P$ and an I(0) sequence $X_t$ given by $X_t= \sum_{j=0}^\infty \psi_j \varepsilon_{t-j}$ satisfying   
\commRV{		
	\begin{align}
				\Delta^{\dn} P(Y_t-Y_0) &= P X_t  \mathds{1}\{t\geq 1\}, \label{eqstart1add} \\
				\Delta^{\ds} (I-P)Y_t & =  (I-P)X_t, \label{eqstart2add} 
			\end{align}}where $\sum_{j=0}^\infty j\|\psi_j\|^2 < \infty$, and 	$\varepsilon_t$ satisfies that  $\mathbb{E}\|\varepsilon_t\|^{\tau} < \infty$ for some $\tau> \max\{4,2/(2\dn-1)\}$, and $\mathds{1}\{\cdot\}$ denotes binary indicator function.
			\item\label{assum1c} $\rank P\sum_{j=0}^\infty \psi_j = q_{\dnn} < \infty$.		 
		\end{enumerate} 
	\end{assumption}
	
	Some comments on Assumption~\ref{assum1} are in order. First, in most empirical applications, a functional time series tends to have a nonzero intercept. Thus, in~(\ref{assum1a}), we assume that the observed time series is given by the sum of an I$(\dn)$ process $Y_t$ and an unobserved intercept $\mu \in \mathcal H$. Moreover, of course, it might sometimes be of interest to consider a linear time trend component; even if we do not explicitly deal with this case,  most of the results to be subsequently given may be extended to accommodate this possibility with moderate modifications. We describe the cointegrating properties of the stochastic part of the observed time series $Z_t$ in~(\ref{assum1b}) with some other necessary conditions for our mathematical development. Here we restrict our interest to the case with $\dn \in (1/2,3/2)$, which seems relevant in most empirical applications involving nonstationary \commWS{fractionally} integrated time series. \commRV{Note that the process given in \eqref{eqstart1add} is nonstationary since $\dn>1/2$ and thus the trucation operator $\mathds{1}\{t\geq 1\}$ needs to be used to have a well-defined process (see, e.g., Section 4 of \cite{Li2020})}. \phantomsection{\label{rvlabel0}	\commRV{If we let $\Delta^{d}_+$ denote the truncated fractional difference operator defined by $\Delta^{d}_+ =	\sum_{j=0}^{t-1} \frac{\Gamma(j-d)}{\Gamma(-d)\Gamma(j+1)} L^j$, then \eqref{eqstart1add} and \eqref{eqstart2add} can be conveniently rewritten as	\begin{align}
				P(Y_t-Y_0) &= \Delta^{-\dn}_+ P X_t, \label{eqstart1} \\
				(I-P)Y_t & = \Delta^{-\ds} (I-P)X_t = \sum_{j=0}^\infty b_j  \varepsilon_{t-j}, \label{eqstart2}
			\end{align}
where $b_j= \sum_{k=0}^\infty \frac{\Gamma(j-k+\ds)}{\Gamma(\ds)\Gamma(j-k+1)}(I-P)\psi_j$.} 	Note that we require $(I-P)Y_t$ to be \commRVV{a stationary long-range dependent (LRD) process with $\ds \in (0,1/2)$, or short-range dependent (SRD) process with $\ds =0$}. Given that any orthogonal projection may be understood as a bipartite decomposition of a Hilbert space, what~\eqref{eqstart1} and~\eqref{eqstart2} imply is that our observed time series may be understood as the sum of two heterogeneous components: the nonstationary component $P(Y_t-Y_0)$ and the stationary component $(I-P)Y_t$. Associated with this condition,~(\ref{assum1c}) identifies the collection of the cointegrating vectors as $\ran (I-P)$; under this condition, $\langle Y_t,v \rangle$ is stationary if and only if $v \in \ran (I-P)$ (see Proposition 3.1 of \cite{Beare2017b}). 	However, given that
\begin{inparaenum}
\item[(i)] \phantomsection\label{rv202} our I$(\ds)$ property does not exclude the possibility of an additional memory deduction (on $\ran (I-P)$) and
\item[(ii)] $\ran (I-P)\sum_{j=0}^\infty \psi_j$ may not be equal to $\ran (I-P)$, there may exist another orthogonal projection $Q$ such that $\ran Q \subset \ran (I-P)$ and $QY_t \in$ I($d_{\ell}$) for $d_{\ell} \in [0,\ds]$. That is, the time series $(I-P)Y_t$ is a quite general stationary process. Given this time series, we are interested in identifying the nonstationary and stationary components from the observed time series, which will be discussed in the next section.
	\end{inparaenum}		

\begin{remark} \label{remempirical} 
	We assume that the functional time series of interest contains a finite dimensional nonstationary component. This assumption appears reasonable in many empirical applications, as evidenced by \cite{Chang2016}, \cite{Nielsen2019}, and \cite{Seo2020}. 
These studies provide strong statistical evidence supporting the existence of finite dimensional I(1) components in various time series, including cross-sectional densities of individual earnings, return densities of a financial asset, age-specific employment rates, and intraday electricity consumption. Moreover, for certain economic functional time series, the assumption $q_{\dnn} < \infty$ is natural. As a representative example, let $Y_t$ be a sequence of yield curves over time, which is a well-known example of nonstationary functional time series.  According to the expectations hypothesis of the term structure of interest rates (see, e.g., Section 6 of \cite{Nielsen2010}), we can reasonably assume that $q_{\dnn}< \infty$. Specifically, bonds with different maturities will be nearly perfect substitutes for each other and, thus, their nonstationarity can reasonably be attributed to a small number of nonstationary components. This argument can be extended to other examples where $Y_t(u)$ and $Y_{t}(u')$ are substitutes for each other for $u,u' \in [0,1]$.   
\end{remark}

\begin{remark} \label{remfarima}
The nonstationary component $P(Y_t-Y_0)$ may be understood as a natural extension of the FARIMA($p,d,q$) processes considered in \cite{Li2020a}. Specifically, consider the following FARMA process: 
\begin{align}
\Phi(L)V_t = \Theta (L) \varepsilon_t,
\end{align}
where $\Phi(L) = I-\Phi_1L - \cdots - \Phi_p L^p$,  $\Theta(L) = I-\Theta_1L - \cdots - \Theta_q L^q$, and $\Phi_j$ and $\Theta_j$ are all bounded linear operators. We let \commRVV{$W_t$} be the process defined by 
\begin{align}
\Delta^{d}\commRVV{W_t} = V_t.
\end{align}
If $d \in (-1/2,1/2)$ and $V_t$ is stationary I(0), then \commRVV{$W_t$} becomes a FARIMA($p,d,q$) process considered in \cite{Li2020a}. On the other hand, if $d=0$ (and thus $\Delta^d=I$) but $\Phi(L)$ allows a unit root instead, then \commRVV{$W_t$} becomes a cointegrated time series considered in \cite{Beare2020}.  Now suppose that  $d \in (-1/2,1/2)$ and $\Phi(L)$ allows a unit root, and the AR operators $\Phi_j$ are compact (note that this compactness assumption is quite common in the literature). We then know from the Granger-Johansen representation theorem (see e.g., Theorem 3.1 of \cite{Beare2020}) that $\Delta V_t$ can be written as $\Delta V_t=\Upsilon \varepsilon_t + V^S_t - V^S_{t-1}$ for $t\geq 1$,  some finite rank operator $\Upsilon$, and an I(0) stationary sequence $V^S_t$. Thus \commRVV{$W_t$} defined by $\Delta^{\dn} \commRVV{W_t} = \Delta V_t \mathds{1}\{t\geq 1\}$ for $\dn:=d+1 \in (1/2,3/2)$ only contains a finite dimensional I($\dn$) component; that is,  $q_{\dnn} < \infty$ as in our Assumption~\ref{assum1}. In fact, this result holds under even weaker conditions, requiring neither compactness of $\Phi_j$ nor the Hilbert space structure of $\mathcal H$; see, for example, \cite{Seo2022} and \cite{Seo2023}.     
\end{remark}

Sometimes, practitioners may also be interested in the case where the stationary part of $Y_t$ can be decomposed into the SRD and LRD components. We will also  consider this case by imposing  the following additional conditions on the stationary component:
	\begin{assumpA}\label{assum2add} 
		The observed time series $Z_t$ satisfies Assumption~\ref{assum1}, and there exists an orthogonal projection $Q$ such that 
\commRV{		
\begin{align*} 
    QP &= 0\\ 
		\Delta^{\ds}	QY_t &=  Q X_t, \\
			(I-Q)(I-P)Y_t &= (I-Q)(I-P) X_t, 	 
		\end{align*}}
		and
		\begin{equation*}
			\rank Q\sum_{j=0}^\infty \psi_j= q_{\dss} < \infty.
		\end{equation*}
	\end{assumpA}
		Under Assumption~\ref{assum2add}, the time series $\{Z_t\}_{t\geq 1}$ satisfying Assumption~\ref{assum1} can be decomposed into three different components: $q_{\dnn}-$dimensional nonstationary, $q_{\dss}-$dimensional LRD and infinite-dimensional SRD components. \phantomsection\label{rvlabel7}\commRV{In empirical applications involving nonstationary functional time series, even after extracting the nonstationary component, it is reasonable to expect that the resulting residual time series may still contain an LRD component. Assumption~\ref{assum2add} is to accommodate this possibility to our model.} In this case, practitioners may be interested in decomposing the nonstationary component from the stationary component (given by the sum of the LRD and SRD components) and in decomposing the LRD component from the SRD component. Moreover, the memory parameters $\dn$ and $\ds$ may also be of interest in practice. We will discuss these issues.
	
	It is useful to introduce some additional notation and terminology. Under Assumption~\ref{assum1}, we have the bipartite decomposition $\mathcal H = \ran P \oplus \ran (I-P)$. As clarified above, the collection of cointegrating vectors is given by $\ran (I-P)$, which is called the cointegrating space and denoted by $\mathcal H_S$. The orthogonal complement to $\mathcal H_S$ is called the dominant subspace (as in \cite{Li2020}) and denoted by $\mathcal H_N$. If Assumption~\ref{assum2add} is satisfied, then $\mathcal H_S$ can also be decomposed into $Q\mathcal H_S$ and $(I-Q)\mathcal H_S$, which are called the LRD and SRD subspaces and denoted by $\mathcal H_{LRD}$ and $\mathcal H_{SRD}$. To sum up, we have
	\begin{align*}
		\mathcal H &= \mathcal H_N \oplus \mathcal H_S \quad \hspace{.8in} \text{under Assumption~\ref{assum1}}, \\
		\mathcal H &= \mathcal H_N \oplus \mathcal H_{LRD} \oplus \mathcal H_{SRD} \quad \text{under Assumption~\ref{assum2add}},
	\end{align*}
\phantomsection\label{rvlabel4}\commRV{where, obviously,}
	\begin{align*}
\mathcal H_N &= \ran P, \,\,\, \mathcal H_S = \ran (I-P), \\ 
	\mathcal H_{LRD} &= \ran Q, \,\,\, \mathcal H_{SRD} = \ran (I-Q)(I-P).
	\end{align*}
\begin{remark}\label{remaddRV2} 
The finite dimensionality of the dominant component is a prevalent assumption in the literature concerning stationary or nonstationary integrated time series of possibly fractional order; see \cite{Chang2016}, \cite{Li2020a}, \cite{Nielsen2019}, \cite{Nielsen2023} and \cite{Seo2020}. This assumption is incorporated into our context (as reflected in Assumptions~\ref{assum1} and~\ref{assum2add}), positioning our paper between existing works and presenting it as an extension, at least to some extent. 
\end{remark}

\begin{remark}\label{remaddRV2add} 
\commRVV{As noted by an anonymous referee, to whom we are indebted, it may be of interest to explore the LRD component $QY_t$ with additional potential memory reduction on $\mathcal{H}_{LRD}$, unlike in Assumption \ref{assum2add}. While this could be a more realistic assumption in practice, developing statistical methods for such a general LRD component is beyond the scope of the present paper. This topic is left for future study, pending further theoretical advancements in the literature on LRD functional time series. }  
\end{remark}


\section{Statistical methods}\label{sec_stat} 
	
	To make our statistical inference invariant with respect to a (possibly) nonzero intercept $\mu$, we will consider $Z_t^0$ or $\bar{Z}_t$ depending on the context, which is defined as follows: for $t\geq 1$,  
	\begin{equation}\label{eqzbar} 
		Z_t^0 = Z_t - Z_0 \quad \text{and} \quad	\bar{Z}_t = Z_t - \frac{1}{T}\sum_{t=1}^T Z_t, 
	\end{equation} 
	where we assume that $Z_0$ is observed. Of course, in practice, $Z_t^0$ will be replaced by $Z_t-Z_1$ by putting the first observation aside to initialize the time series. Thus, no essential restriction is placed by using $Z_t^0$ in analysis. 
	
\subsection{Decomposition of $\mathcal H_N$ and $\mathcal H_S$}\label{sec_fracdecom} 
	
In this section, we consider the decomposition of the nonstationary and stationary components, which essentially boils down to identifying the cointegrating space $\mathcal H_S$ or the dominant subspace $\mathcal H_N$. As may be deduced from the existing literature (see, e.g., \cite{Chang2016,Li2020,Nielsen2019}), the dominant subspace $\mathcal H_N$ can be estimated by the span of the eigenvectors corresponding to the first $q_{\dnn}$ largest eigenvalues of a certain sample operator. For this reason, the estimation of $\mathcal H_S$ reduces to the estimation of $q_{\dnn}$, which will be subsequently discussed. The quantity $q_{\dnn}$ itself may be of interest to practitioners since it represents the number of linearly independent fractional unit root processes embedded into the time series; in the literature considering $n$-dimensional vector-valued fractionally integrated time series, the quantity $n-q_{\dnn}$ is commonly called the (fractional) cointegrating rank and has attracted significant attention. We will develop statistical inference on $q_{\dnn}$ in this section and obtain the desired decomposition.  
	
	\subsubsection{Eigenvalue-ratio-based estimation of $q_{\dnn}$} \label{sec_eresti}
	\commHS{We will first consider an eigenvalue-ratio-based estimator, similar to the estimator of the dimension of the dominant subspace proposed by \citep{Li2020}}. In our Monte Carlo study, this estimator performs worse in finite samples compared to our second estimator, which is obtained using the proposed sequential testing procedure. Nevertheless, the subsequent discussion becomes a crucial input to the aforementioned testing procedure and helps form a deeper understanding of fractionally cointegrated time series. 
	
	Under Assumption~\ref{assum1}, an element $v$ included in the dominant subspace $\mathcal H_N$ is differentiated with any other element $\tilde{v} \in \mathcal H_S$ in the sense that the sample variance of $\langle Y_t,v\rangle$ tends to be higher than that of $\langle Y_t,\tilde{v}\rangle$; more specifically, we have
	\commRV{\begin{equation*}
		\frac{T^{-1}\sum_{t=1}^T \langle Y_t,v\rangle^2}{T^{-1}\sum_{t=1}^T \langle Y_t,\tilde{v}\rangle^2}  \pto  \infty.
	\end{equation*}}Based on the above intuition combined with Lemma~\ref{lemwkc} and the asymptotic properties of the covariance operator of nonstationary fractionally integrated functional time series, we may establish the following result: 
	\begin{proposition}\label{prop3} 
		Suppose that Assumption~\ref{assum1} holds, $K$ is a finite integer satisfying $K>q_{\dnn}$ and the $(K-q_{\dnn})$-th largest eigenvalue of $\mathbb{E}[(I-P)Y_t\otimes (I-P)Y_t]$ is nonzero and distinct from the next one. Let $(\hat{\mu}_1,\ldots,\hat{\mu}_K)$ be the ordered (from the largest)  eigenvalues  of the sample covariance operator of $Z_t$ given by  
		\begin{equation*}
			\hat{C}_{\bar{Z}}=	\frac{1}{T} \sum_{t=1}^T \bar{Z_t} \otimes \bar{Z}_t. 
		\end{equation*}
		Then the following holds:
		\begin{enumerate}[(i)] 
			\item\label{eqde1} ${\hat{\mu}_j}/{\hat{\mu}_{j+1}}  \pto \infty$ if $j=q_{\dnn}$ while ${\hat{\mu}_j}/{\hat{\mu}_{j+1}}  = O_p(1)$ if $j\neq q_{\dnn}$.
			\item The corresponding eigenvectors $(\hat{v}_1,\ldots,\hat{v}_{q_{\dnn}})$ satisfy that 
			\begin{align}
				\sum_{j=1}^{q_{\dnn}} \hat{v}_j \otimes \hat{v}_j   \pto  P. \label{eqde3}
			\end{align}
		\end{enumerate}
	\end{proposition}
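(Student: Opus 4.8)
The plan is to analyze the asymptotic behavior of the sample covariance operator $\hat{C}_{\bar Z}$ by separating the nonstationary and stationary parts of $Y_t$ via the projection $P$. First I would note that since $\bar Z_t = \bar Y_t$ (the intercept $\mu$ cancels under demeaning), we can write $\hat C_{\bar Z} = \hat C_{\bar Y}$ and decompose $\bar Y_t = P\bar Y_t + (I-P)\bar Y_t$. The key structural input is Assumption~\ref{assum1}: $P(Y_t - Y_0) = \Delta^{-d}_+ P X_t$ is a (truncated) fractionally integrated process of order $d \in (1/2,3/2)$ living in the $q_d$-dimensional space $\ran(P\sum_j\psi_j)$ (for large $t$ the effective range of $PX_t$ is this subspace), while $(I-P)Y_t$ is a stationary process with finite covariance $\Sigma_S := \mathbb{E}[(I-P)Y_t\otimes(I-P)Y_t]$. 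The heuristic ratio displayed just before the proposition — that $T^{-1}\sum_t\langle Y_t,v\rangle^2$ diverges at rate $T^{2d-1}$ for $v\in\mathcal H_N$ but is $O_p(1)$ for $v\in\mathcal H_S$ — is exactly what Lemma~\ref{lemwkc} (referenced in the text, on the weak convergence / normalization of the nonstationary covariance operator) should supply.

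The main steps I would carry out are: (1) Establish that $T^{-(2d-1)}\hat C_{P\bar Y}$ converges in probability (or in distribution, with a positive-definite-on-$\ran P$ limit) to a random operator $\mathcal G$ whose range is almost surely exactly the $q_d$-dimensional subspace $\ran(P\sum_j\psi_j) =: \mathcal R$; this uses the functional-CLT / fractional-Brownian-motion scaling for $\Delta^{-d}_+ PX_t$ together with $\sum_j j\|\psi_j\|^2<\infty$ and the moment condition $\mathbb{E}\|\varepsilon_t\|^\tau<\infty$. (2) Show the cross term $T^{-(2d-1)}\cdot T^{-1}\sum_t (P\bar Y_t\otimes(I-P)\bar Y_t + \text{adjoint})$ is $o_p(1)$, and that $T^{-(2d-1)}\hat C_{(I-P)\bar Y}\pto 0$ since $\hat C_{(I-P)\bar Y}\pto\Sigma_S = O_p(1)$ and $2d-1>0$. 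Hence $T^{-(2d-1)}\hat C_{\bar Z}\pto \mathcal G$ with $\rank\mathcal G = q_d$. (3) Apply a perturbation/continuity argument for eigenvalues and eigenprojections (e.g. Weyl's inequality and the Davis–Kahan-type bound for self-adjoint compact operators): the top $q_d$ eigenvalues of $\hat C_{\bar Z}$ are $\Theta_p(T^{2d-1})$ while $\hat\mu_{q_d+1}$ and below converge (after \emph{no} rescaling) to the eigenvalues of $\Sigma_S$ restricted appropriately — in particular $\hat\mu_{q_d+1}\pto \lambda_{?}$, using the hypothesis that the relevant eigenvalue of $\Sigma_S$ is nonzero and simple to guarantee $\hat\mu_{q_d}/\hat\mu_{q_d+1}\to\infty$ and, for $j<q_d$, that both are of the same order $T^{2d-1}$ so the ratio is $O_p(1)$; for $j>q_d$ both are $O_p(1)$ with nonzero limits (here the distinctness assumption prevents a degenerate $0/0$). (4) For part (ii), since the eigenprojection onto the top $q_d$ eigenvalues of $T^{-(2d-1)}\hat C_{\bar Z}$ converges to the eigenprojection of $\mathcal G$ onto its nonzero eigenvalues, and that projection is the orthogonal projection onto $\overline{\ran\mathcal G}=\mathcal R\subseteq\ran P$ — but $\mathcal R$ need not equal $\ran P$! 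So I would instead argue directly: the span of $(\hat v_1,\dots,\hat v_{q_d})$ consistently estimates $\overline{\ran(T^{-(2d-1)}\hat C_{P\bar Y})}$, and one must check this equals $\ran P$ rather than the smaller $\mathcal R$.

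The hard part, and the subtle point, is precisely step (4): the limiting rescaled operator $\mathcal G$ from the nonstationary part has range only $\mathcal R = \ran(P\sum_j\psi_j)$, which by Assumption~\ref{assum1c} is $q_d$-dimensional but is a \emph{subspace} of $\ran P$ that may be strictly smaller if $\ran P$ is infinite-dimensional (or just larger than $q_d$). Yet the proposition claims $\sum_{j=1}^{q_d}\hat v_j\otimes\hat v_j\pto P$, the full projection onto $\ran P$. The resolution must be that under Assumption~\ref{assum1}, $\ran P$ is itself $q_d$-dimensional — i.e. $P(Y_t - Y_0)=\Delta^{-d}_+PX_t$ forces $\ran P\subseteq\overline{\ran(P\sum\psi_j)}$ because the only way $PY_t$ can genuinely be I$(d)$ in every nonstationary direction of $\ran P$ is if $P\sum\psi_j$ has full rank on $\ran P$; equivalently $\dim\ran P = q_d$. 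I would need to extract this fact carefully from the representation \eqref{eqstart1} and the definition of I$(0)$-ness (condition (ii): $\sum\psi_j\neq 0$, together with $C_\varepsilon$ positive definite), showing $\mathcal R = \ran P$, after which step (3)'s eigenprojection convergence delivers \eqref{eqde3} immediately. The remaining pieces — Weyl-type eigenvalue bounds, the functional CLT scaling, and the $o_p$ bound on cross terms — are routine given Lemma~\ref{lemwkc} and standard results on fractionally integrated Hilbert-space processes, so I would cite those and keep the write-up focused on the rank/range bookkeeping.
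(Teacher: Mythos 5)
Your overall route is the same as the paper's: split $\hat{C}_{\bar{Z}}$ into its $P$- and $(I-P)$-blocks, use the weak convergence $T^{1/2-d}\bar{Z}_{\lfloor Tr\rfloor}\Rightarrow \Omega\bar{W}_d(r)$ from Lemma~\ref{lemwkc} to show that $T^{-2d}\sum_{t}P\bar{Z}_t\otimes P\bar{Z}_t$ converges to $\int_0^1\Omega\bar{W}_d(s)\otimes\Omega\bar{W}_d(s)\,ds$, invoke the ergodic LLN for the stationary block, show the cross terms are negligible at the $T^{2d-1}$ scale, and then pass to eigenvalues and eigenprojections by perturbation arguments. The paper does the last step through Lemmas 4.3 and 4.4 of \cite{Bosq2000} (viewing $(\hat{v}_{q_d+1},\ldots,\hat{v}_K)$ as eigenvectors of $(I-\hat{P}_{q_d})\hat{C}_{\bar{Z}}(I-\hat{P}_{q_d})$) and defers the eigenvalue order-of-magnitude bookkeeping to the operator-matrix computation in the proof of Proposition~\ref{prop4}, whereas you would use Weyl and Davis--Kahan bounds; that difference is immaterial.

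The genuine problem is your step (4). You are right that the rescaled limit of the nonstationary block has range $\ran\Omega=\ran\bigl(P\sum_{j}\psi_j\bigr)$, a $q_d$-dimensional subspace of $\ran P$, so \eqref{eqde3} requires $\ran\Omega=\ran P$, i.e.\ $\dim\ran P=q_d$. But your proposed resolution --- extracting this equality from \eqref{eqstart1} and the definition of I$(0)$-ness --- cannot succeed, because Assumption~\ref{assum1} as stated does not imply it. For example, let $e_1,e_2$ be orthonormal elements of $\ran P$ with $\dim\ran P=2$, and take $\psi_0=I$, $\psi_1=-e_2\otimes e_2$, $\psi_j=0$ otherwise: then $X_t$ is I$(0)$, $(I-P)X_t=(I-P)\varepsilon_t$, all of \eqref{eqstart1}--\eqref{eqstart2} and Assumption~\ref{assum1}\ref{assum1c} hold with $q_d=1$, yet $(\sum_j\psi_j)^{\ast}e_2=0$, so $\langle Y_t,e_2\rangle$ has integration order $d-1<1/2$, $\ran\Omega=\spn\{e_1\}$, and $\hat{v}_1\otimes\hat{v}_1$ converges to $e_1\otimes e_1\neq P$. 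The conclusion \eqref{eqde3} therefore holds only under the implicit identification $q_d=\dim\mathcal H_N$, equivalently $\Omega$ positive definite on $\ran P$ --- which is exactly what Remark~\ref{remev1} asserts and what the paper's own proof silently uses when it claims that the first $q_d$ eigenvectors converge to a random orthonormal basis of $\ran P$. So you should record this as an additional (implicit) hypothesis rather than attempt to prove it; once it is in place, your steps (1)--(3) deliver the proposition essentially as in the paper.
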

	Some direct consequences of the results given in Proposition~\ref{prop3} are given as follows:
	\begin{corollary} \label{cor1}
		Let everything be as in Proposition~\ref{prop3}. Then the following hold.
		\begin{enumerate}[(i)]
			\item\label{cor1a} $\hat{q}_{\dnn}:= \argmax_{1\leq j\leq K} \left(\frac{\hat{\mu}_j}{\hat{\mu}_{j+1}}\right)  \pto q_{\dnn}$.
			\item\label{cor1b}	$\hat{P}:=	\sum_{j=1}^{\hat{q}_{\dnn}} \hat{v}_j \otimes \hat{v}_j   \pto  P$.
		\end{enumerate}
	\end{corollary}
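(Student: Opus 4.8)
Both parts fall out of Proposition~\ref{prop3} with only routine book-keeping, so the plan is short. For part (i) I would show directly that $\mathbb{P}(\hat{q}_d = q_d)\to 1$; since $\hat{q}_d$ is valued in $\{1,\dots,K\}$ and $q_d$ is a fixed integer, this is equivalent to $\hat{q}_d\pto q_d$. Write $R_j := \hat{\mu}_j/\hat{\mu}_{j+1}$ (well defined with probability tending to one, as the sample covariance operator $\hat C_{\bar Z}$ generically has rank $T-1\to\infty$, so its top eigenvalues are strictly positive for $T$ large). By Proposition~\ref{prop3}(i) we have $R_{q_d}\pto\infty$ while $R_j = O_p(1)$ for each $j\neq q_d$. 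Fix $\delta>0$. Because the index set $\{1,\dots,K\}\setminus\{q_d\}$ is finite, the $O_p(1)$ bounds can be combined into a single constant $M$ and threshold $T_0$ with $\mathbb{P}(R_j>M)<\delta/K$ for all $j\neq q_d$ and $T\geq T_0$, whence $\mathbb{P}(\max_{j\neq q_d}R_j>M)<\delta$ by the union bound. Simultaneously $\mathbb{P}(R_{q_d}\leq M)\to 0$. On the intersection of the two complementary events, $R_{q_d}>M\geq R_j$ strictly for every $j\neq q_d$, so $q_d$ is the unique maximiser of $j\mapsto R_j$; hence $\liminf_T\mathbb{P}(\hat{q}_d=q_d)\geq 1-2\delta$, and letting $\delta\downarrow 0$ gives part (i).

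For part (ii) the plan is to transfer the convergence in Proposition~\ref{prop3}(ii) across the event $\{\hat{q}_d=q_d\}$. Put $\tilde P := \sum_{j=1}^{q_d}\hat v_j\otimes\hat v_j$, so $\tilde P\pto P$ by Proposition~\ref{prop3}(ii), and observe that $\hat P = \tilde P$ on $\{\hat{q}_d=q_d\}$. Then for every $\epsilon>0$,
\[
\{\|\hat P - P\|_{\mathcal L_{\mathcal H}}>\epsilon\}\subseteq\{\|\tilde P - P\|_{\mathcal L_{\mathcal H}}>\epsilon\}\cup\{\hat{q}_d\neq q_d\},
\]
and the probabilities of the two events on the right tend to $0$ by $\tilde P\pto P$ and by part (i) respectively; hence $\mathbb{P}(\|\hat P - P\|_{\mathcal L_{\mathcal H}}>\epsilon)\to 0$, i.e.\ $\hat P\pto P$.

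I do not expect a genuine obstacle, since all the analytic content — the asymptotics of $\hat C_{\bar Z}$ and the behaviour of its eigenvalues and eigenvectors — is already established in Proposition~\ref{prop3}. The only points requiring a little care are (a) merging the finitely many $O_p(1)$ bounds into one constant $M$ uniformly over $j\neq q_d$, which is immediate because $K$ is fixed and finite, and (b) checking that $\hat{q}_d$ and $\hat P$ are well defined with probability tending to one — a unique maximiser and no boundary tie $\hat\mu_{q_d}=\hat\mu_{q_d+1}$ — both of which follow from the divergence statement $R_{q_d}\pto\infty$ in Proposition~\ref{prop3}(i).
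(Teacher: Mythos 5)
Your proof is correct and matches the paper's intent: the paper presents Corollary~\ref{cor1} as a direct consequence of Proposition~\ref{prop3} with the routine details omitted, and your argument (uniform $O_p(1)$ bound over the finitely many $j\neq q_d$ versus divergence of the ratio at $j=q_d$, then transferring Proposition~\ref{prop3}(ii) across the event $\{\hat q_d=q_d\}$) is exactly the standard fill-in. No gaps.
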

	Note that Proposition~\ref{prop3} requires a careful choice of $K$ satisfying some mathematical conditions, which is crucial to achieve the consistency results in Corollary~\ref{cor1} (see Remark~\ref{remev1}). However, such a choice can be obtained without difficulty (see Remark~\ref{remchoose}). \commWS{It is also worth emphasizing that our results given in Proposition~\ref{prop3} and Corollary~\ref{cor1} require that only the $(K-q_{\dnn})$-th  eigenvalue of $\mathbb{E}[(I-P)Y_t\otimes (I-P)Y_t]$ is different from the next one; that is, the results allow the case where some of the first $(K-q_{\dnn})$ eigenvalues are tied. Of course, this requirement does not seem to be restrictive in practice.} Given that any closed subspace of $\mathcal H$ may be identified as the unique orthogonal projection onto the space,~(\ref{cor1b}) in Corollary~\ref{cor1} may be understood as the convergence of $\ran \hat{P}$ (resp.\ $\ran (I-\hat{P})$) to $\mathcal H_N$ (resp.\ $\mathcal H_S$), and we thus may write
	\begin{equation*}
		\ran \hat{P} \pto \mathcal H_N \quad \text{and} \quad  \ran (I-\hat{P}) \pto \mathcal H_S.
	\end{equation*}
	
	\begin{remark} \label{remev1}
		In our proof of Proposition~\ref{prop3}, it is shown that, for some $\Omega$ which is symmetric and positive definite on $\mathcal H_N$, 
		\begin{align}
			T^{1-2\dn} \hat{\mu}_j  &\dto    \text{$j$-th largest eigenvalue of } \int_{0}^1 \Omega \bar{W}_{\dn}(s) \otimes \Omega \bar{W}_{\dn}(s)ds,   \label{eqconsis1}  
		\end{align}
	\text{jointly for $j\leq q_{\dnn}$}, and 
	\begin{align}
			\hat{\mu}_j  &\pto   \text{$j$-th  largest eigenvalue of }  \mathbb{E}[(I-P){Y}_t\otimes (I-P){Y}_t],    \label{eqconsis2}
		\end{align}
	\text{jointly for $j>q_{\dnn}$}. In \eqref{eqconsis1}, $\bar{W}_{\dn}$ is a demeaned Type II  fractional Brownian motion of order $\dn$ defined on $\mathcal H_N$, which will be introduced in detail in Section~\ref{sec_appen}. 
		The results given in Proposition~\ref{prop3}(\ref{eqde1})  are consequences of the above convergence results. Moreover, this shows why we require the $(K-q_{\dnn})$-th eigenvalue of $\mathbb{E}[(I-P)Y_t\otimes (I-P)Y_t]$ to be nonzero in Proposition~\ref{prop3}; if the  $(K-q_{\dnn})$-th eigenvalue is zero, then $\hat{\mu}_{K-1}/\hat{\mu}_{K} \pto \infty$, which is not desirable for consistency of $\hat{q}_{\dnn}$. 
	\end{remark}
	\begin{remark}\label{remprop3}
		\commHS{The estimator given in Proposition~\ref{prop3} may be understood as a tailored version of the eigenvalue-ratio estimator of the dimension of the dominant subspace proposed by \cite{Li2020}.} It is worth noting two important differences in theoretical and practical aspects. First, due to cointegration, we can explain more about the role of $K$ (an upper bound of $q_{\dnn}$, which needs to be chosen by a researcher), while its role is not sufficiently discovered in the setting of \cite{Li2020}. Due to this, the estimator of \cite{Li2020} requires an additional and arbitrary penalty parameter to suppress the possibility that two small eigenvalues result in a large ratio and hence may give a misleading estimate. On the other hand, we can provide a feasible and less arbitrary way to choose $K$ (see Remark~\ref{remchoose} below).
	\end{remark}
	
	
	\begin{remark}\label{remchoose}
		In Proposition~\ref{prop3}, $K$ needs to be greater than $q_{\dnn}$. We know from Remark~\ref{remev1} that the first $q_{\dnn}$ largest eigenvalues of $\hat{C}_{\bar{Z}}$ have bigger stochastic orders than the remaining eigenvalues. It thus may not be difficult to conjecture a slightly bigger integer than $q_{\dnn}$ from the estimated eigenvalues, and $K$ can be set to such an integer.  Note that we also require the $(K-q_{\dnn})$-th eigenvalue of $\mathbb{E}[(I-P)Y_t\otimes (I-P)Y_t]$ to be nonzero and distinct from the next one.  Given that we consider a functional time series, $\mathbb{E}[(I-P)Y_t\otimes (I-P)Y_t]$ tends to allow many nonzero eigenvalues in most empirical applications. Moreover, violation of this condition may be avoided by checking if $\hat{\mu}_{K}-\hat{\mu}_{K+1}$ is bounded away from zero; see~\eqref{eqconsis2}. \commRV{As an extreme case, if the eigenvalues of $\mathbb{E}[(I-P)Y_t\otimes (I-P)Y_t]$ are all nonzero and distinct, we do not require any condition on $K$  in our asymptotic analysis; that is, $K$ can be any arbitrary finite integer greater than $q_{\dnn}$.} 
		\end{remark}

Even if we can consistently estimate $q_{\dnn}$ (and thus $P$) based on Proposition~\ref{prop3}, practitioners may be more interested in a statistical test for $q_{\dnn}$, which demonstrates how strongly the data support a certain hypothesis about $q_{\dnn}$. In the next section, we provide a variance-ratio-type test for $q_{\dnn}$ that can be applied to our functional time series setting and propose an alternative estimator $\bar{q}_{\dnn}$ obtained by sequential application of the test. Our simulation results show that this new estimator tends to outperform $\hat{q}_{\dnn}$.

\subsubsection{Variance-ratio test on $q_{\dnn}$} \label{sec_vrtest}

The limiting behavior of the sample covariance operator $\hat{C}_{\bar{Z}} = T^{-1}\sum_{t=1}^T \bar{Z}_t \otimes \bar{Z}_t$ under the existence of cointegration enables us to implement a statistical test about $q_{\dnn}$, which will be discussed in this section. 

As the first step to developing our test, we consider a fractionally integrated variable $\widetilde{Z}_t$ as follows:
\begin{equation}\label{eqztilde}
	\widetilde{Z}_t = \Delta_+^{-\alpha} \bar{Z}_t.
\end{equation}
\commRV{The constant $\alpha > 0$ is specific to the user. As will be discussed later, the selection of $\alpha$ affects the limiting distribution and appears to be important for the finite sample properties of our test to be detailed. However, from a theoretical perspective, $\alpha$ is permitted to take any positive real value (see Remark~\ref{remdalpha} for a more detailed discussion).}   
For any positive integer $K$, let $\hat{P}_K$ denote the orthogonal projection given by
\begin{align} \label{defpk}
	\hat{P}_K = \sum_{j=1}^{K} \hat{v}_j \otimes \hat{v}_j,
\end{align}
where $(\hat{v}_1,\ldots,\hat{v}_K)$ is the eigenvectors corresponding to the first $K$ largest eigenvalues of $\hat{C}_{\bar{Z}}$. Let $A_T$ and $B_T$ be defined by 
\begin{equation*}
	A_T = \sum_{t=1}^T 	\hat{P}_K\bar{Z}_t \otimes	\hat{P}_K \bar{Z}_t, \quad B_T = \sum_{t=1}^T 	\hat{P}_K\widetilde{Z}_t \otimes	\hat{P}_K \widetilde{Z}_t.
\end{equation*}
We then define the following generalized eigenvalue problem:
\begin{equation}\label{eqv}
	\hat{\nu}_j  B_T \hat{w}_j=  A_T \hat{w}_j, \quad \hat{\nu}_1\leq \hat{\nu}_2\leq\ldots \leq \hat{\nu}_K, \quad   \hat{w}_j \in \ran \hat{P}_K.
\end{equation}
Since the domain and the codomain of each of $A_T$ and $B_T$ are restricted to the span of the first $K$ eigenvectors of $\hat{C}_{\bar{Z}}$, we may compute $K$ (almost surely) positive eigenvalues from~\eqref{eqv}. Our main result in this section is given as follows: in the proposition below, $B_{\delta}(s)$ denotes a $q_{\dnn}$-dimensional type II fractional standard Brownian motion defined by $B_{\delta}(0) = 0$ almost surely and $B_{\delta}(s)=\frac{1}{\Gamma(\delta)}\int_{0}^s (s-r)^{\delta-1}dW_0(r)$ for  $s>0$ and the standard Brownian motion $W_0$.
\begin{proposition}\label{prop4} 
	Suppose that Assumption~\ref{assum1} holds,  $K$ is a finite integer satisfying $K>q_{\dnn}$ and the $(K-q_{\dnn})$-th largest eigenvalue of $\mathbb{E}[(I-P)Y_t\otimes (I-P)Y_t]$ is nonzero and distinct from the next one, and $\alpha>0$. \commRV{Let $(\nu_{1,\alpha},\ldots,\nu_{q_{\dnn},\alpha})$} be the ordered eigenvalues (from the smallest) of 
	\begin{align}\label{eqprop4}
		\left(\int_{0}^{1}  \tilde{B}_{\dn+\alpha}(s)  \tilde{B}_{\dn+\alpha}(s)'ds \right)^{-1} \int_{0}^{1}  \bar{B}_{\dn}(s)  \bar{B}_{\dn}(s)'ds, 
	\end{align}
	where\commRV{
	\begin{align*}
		\bar{B}_{\dn}(s) &= 	 B_{\dn}(s) - \int_{0}^1 B_{\dn}(r)dr,\\
		\widetilde{B}_{\dn+\alpha}(s) &=  B_{\dn+\alpha}(s) - \left(\int_{0}^1 B_{\dn+\alpha}(r)dr\right)\left(\int_{0}^s \frac{(s-r)^{\alpha-1}}{\Gamma(\alpha)}dr\right).
	\end{align*}}
	Then \commRV{
	\begin{equation*} 
		T^{2\alpha} (\hat{\nu}_1,\ldots,\hat{\nu}_{q_{\dnn}})   \dto  (\nu_{1,\alpha},\ldots,\nu_{q_{\dnn},\alpha})
	\end{equation*}}
	and 
	\begin{equation*}
		T^{2\alpha}\hat{\nu}_{q_{\dnn}+j}   \pto  \infty, \quad j =1,\ldots,K-q_{\dnn}.
	\end{equation*}
\end{proposition}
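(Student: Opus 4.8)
The strategy is to combine a joint functional central limit theorem for the nonstationary part of the two filtered series with the rate-quantified consistency of $\hat P_K$ coming from Proposition~\ref{prop3}, Corollary~\ref{cor1} and Remark~\ref{remev1}, and then to read off the generalized eigenvalue asymptotics in~\eqref{eqv} from a block decomposition of $A_T$ and $B_T$. First one reduces to $\mu=0$: since $\Delta_+^{-\alpha}$ is a scalar filter in $L$, the intercept enters $\widetilde Z_t=\Delta_+^{-\alpha}\bar Z_t$ only through the term $\mu(\Delta_+^{-\alpha}\mathbf{1})_t$ from $\Delta_+^{-\alpha}Z_t$ and the identical term removed via $\bar Z(\Delta_+^{-\alpha}\mathbf{1})_t$, which cancel, so $A_T,B_T,\hat P_K$ and the $\hat\nu_j$ do not depend on $\mu$. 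Then, under Assumption~\ref{assum1}, using Lemma~\ref{lemwkc}, an invariance principle for fractionally integrated $\mathcal H$-valued processes, and $\Delta_+^{-\alpha}\Delta_+^{-d}=\Delta_+^{-(d+\alpha)}+(\text{an asymptotically negligible truncation remainder})$, I would establish the joint convergence, on the nonstationary component,
\begin{equation*}
\big(T^{1/2-d}\,P\bar Z_{\lfloor T\cdot\rfloor},\ T^{1/2-d-\alpha}\,P\widetilde Z_{\lfloor T\cdot\rfloor}\big)\ \dto\ \big(\Omega\bar B_d,\ \Omega\widetilde B_{d+\alpha}\big),
\end{equation*}
where $\Omega$ is the rank-$q_d$ operator of Remark~\ref{remev1} (so $\ran\Omega=\mathcal H_N$), the centering terms in $\bar B_d$ and $\widetilde B_{d+\alpha}$ arising from the two demeanings; simultaneously $T^{-1}\sum_{t=1}^T(I-P)Y_t\otimes(I-P)Y_t\pto C:=\mathbb E[(I-P)Y_t\otimes(I-P)Y_t]$, and, when $d-b+\alpha\ge1/2$, $\Delta_+^{-\alpha}(I-P)Y_t$ is I$(d-b+\alpha)$ with partial-sum normalization $T^{1/2-(d-b+\alpha)}$.

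Second, by Proposition~\ref{prop3}, Corollary~\ref{cor1} and Remark~\ref{remev1} (plus standard spectral perturbation), $\hat P_K\pto P_K:=P+\Pi_K$ with $\Pi_K$ the spectral projection of $C$ onto its first $K-q_d$ eigenvectors (well defined under the hypothesis on the $(K-q_d)$-th eigenvalue); the eigenvalue separation --- nonstationary eigenvalues $\Theta_p(T^{2d-1})$, the rest $O_p(1)$, cross-block $O_p(T^{d-1})$ --- gives $\|P\hat v_j\|=O_p(T^{-d})$ for $j>q_d$ and $\|(I-P)\hat v_j\|=O_p(T^{1-2d})$ for $j\le q_d$. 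With $\hat P:=\sum_{j\le q_d}\hat v_j\otimes\hat v_j$ and the split $\hat P_K=\hat P+(\hat P_K-\hat P)$, these rates give (jointly) $T^{-2d}A_T\dto\Omega M_A\Omega^\ast$ and $T^{-2d-2\alpha}B_T\dto\Omega M_B\Omega^\ast$ in $\mathcal L_{\mathcal H}$, where $M_A=\int_0^1\bar B_d(s)\bar B_d(s)'ds$ and $M_B=\int_0^1\widetilde B_{d+\alpha}(s)\widetilde B_{d+\alpha}(s)'ds$ are a.s.\ positive-definite $q_d\times q_d$ matrices. Quantitatively, in a basis of $\ran\hat P_K$ adapted to $\ran\hat P\oplus\ran(\hat P_K-\hat P)$ and with $D_A=\mathrm{diag}(T^dI_{q_d},T^{1/2}I_{K-q_d})$, $D_B=\mathrm{diag}(T^{d+\alpha}I_{q_d},T^mI_{K-q_d})$, $m:=\max\{1/2,d-b+\alpha\}$, the normalized $\widetilde{\mathcal A}:=D_A^{-1}A_T D_A^{-1}$ converges to a block-diagonal positive-definite matrix $\mathrm{diag}(\mathcal A_{11},\mathcal A_{22})$ --- with $\mathcal A_{11}$ a coordinate representation of $\Omega M_A\Omega^\ast$ on $\mathcal H_N$, and $\mathcal A_{22}$ one of $\Pi_K C\Pi_K$ (positive definite) --- while $\widetilde{\mathcal B}:=D_B^{-1}B_T D_B^{-1}$ converges to a (possibly full) positive-definite matrix $\mathcal B$ with $(1,1)$-block $\mathcal B_{11}$ a coordinate representation of $\Omega M_B\Omega^\ast$.

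Third, factoring $D_B$ out of both sides of $\det(A_T-\hat\nu B_T)=0$ (restricted to $\ran\hat P_K$) reduces the $\hat\nu_j$ to the generalized eigenvalues of the pencil $(E\widetilde{\mathcal A}E,\ \widetilde{\mathcal B})$, $E:=D_B^{-1}D_A=\mathrm{diag}(T^{-\alpha}I_{q_d},\,T^{1/2-m}I_{K-q_d})$; since $d-b<1/2$ one has $T^{-\alpha}\ll T^{1/2-m}$. An Ostrowski-type bound, plus the fact that for bounded candidate eigenvalues $\lambda$ the divergent $(2,2)$-block of $E\widetilde{\mathcal A}E$ forces the eigenvector to localize to the first $q_d$ coordinates, then yields $T^{2\alpha}(\hat\nu_1,\dots,\hat\nu_{q_d})\dto$ the ordered generalized eigenvalues of $(\mathcal A_{11},\mathcal B_{11})$, i.e.\ of $(\Omega M_A\Omega^\ast,\Omega M_B\Omega^\ast)$, while $T^{2\alpha}\hat\nu_{q_d+j}$ diverges at rate $T^{\min\{2\alpha,\,1-2d+2b\}}\to\infty$ for $j=1,\dots,K-q_d$ (here $1-2d+2b>0$ again because $d-b<1/2$). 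Finally, substituting $w=\Omega u$ and using injectivity of $\Omega$ reduces $\Omega M_A\Omega^\ast w=\nu\,\Omega M_B\Omega^\ast w$ to $M_A\tilde u=\nu M_B\tilde u$, so the limiting eigenvalues coincide with those of $M_B^{-1}M_A$ in~\eqref{eqprop4}, and the nuisance operator $\Omega$ cancels.

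The main obstacle is the interface of the last two steps: securing sharp enough eigenvector perturbation rates, verifying uniformly in $t$ that every off-diagonal and cross-contaminating block of $A_T$ and $B_T$ is negligible relative to the leading $T^{2d}$ (resp.\ $T^{2d+2\alpha}$) block on $\ran\hat P_K$, and then handling the generalized eigenvalue problem when $B_T$ is nearly rank-deficient on $\ran\hat P_K$ --- which is exactly why one must pin down the precise $\Theta_p(T)$ order (bounded below) of the stationary block of $A_T$ and, in the regime $d-b+\alpha\ge1/2$ where $\widetilde{\mathcal B}$ acquires a non-negligible off-diagonal coupling, use the eigenvector-localization argument to show that this coupling does not contaminate the limit of the $q_d$ smallest eigenvalues.
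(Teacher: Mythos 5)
Your plan follows the same skeleton as the paper's proof: decompose $A_T$ and $B_T$ on $\ran\hat P_K$ into blocks along $\ran P$ and $\ran(I-P)$, use Lemma~\ref{lemwkc} for the nonstationary blocks, the Nielsen (2010)-type bounds for the cross and stationary blocks of $B_T$, the ergodic LLN for the stationary block of $A_T$, the consistency $\hat P_K\pto P_K=P+\sum_{j\le K-q_d}v_j^S\otimes v_j^S$, and finally the cancellation of $\Omega$ so that the limit is distributionally that of \eqref{eqprop4}. Where you diverge is the endgame, and this is exactly where the paper is simpler: instead of analysing the pencil $(A_T,B_T)$ directly — which forces you to introduce the block-dependent normalization $D_B$ with $m=\max\{1/2,d-b+\alpha\}$, to argue that $\widetilde{\mathcal B}$ has a nondegenerate positive-definite limit, and then to invoke an Ostrowski-type bound plus eigenvector localization to keep the near-rank-deficient stationary block of $B_T$ from contaminating the $q_d$ smallest eigenvalues — the paper simply inverts the problem. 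It studies $\hat\tau_j=(T^{2\alpha}\hat\nu_j)^{-1}$, i.e.\ the generalized eigenvalues with the roles of $A_T$ and $B_T$ swapped, using a single scaling $D_T=\operatorname{diag}(T^{-d}I_1,T^{-1/2}I_2)$. Then $D_TA_TD_T$ converges to a block-diagonal operator that is invertible on $\ran P_K$ (nonstationary block $\mathcal A_d$, stationary block $P_K\mathbb E[(I-P)Y_t\otimes(I-P)Y_t]P_K$), while $T^{-2\alpha}D_TB_TD_T$ converges to a rank-$q_d$ limit ($\mathcal A_{d+\alpha}$ in the nonstationary block, zero elsewhere, precisely because $\psi=\max\{d-b+\alpha,1/2\}<1/2+\alpha$ when $d-b<1/2$). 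Continuity of eigenvalues then gives at once $\hat\tau_j\pto 0$ for $j>q_d$ (your divergence claim) and $(\hat\tau_1,\dots,\hat\tau_{q_d})\dto$ eigenvalues of $\mathcal A_d^{-1}\mathcal A_{d+\alpha}$, with no localization argument and no need for your unproven quantitative eigenvector rates $\|P\hat v_j\|=O_p(T^{-d})$, $\|(I-P)\hat v_j\|=O_p(T^{1-2d})$ — the paper only ever uses $\hat P_K\pto P_K$ together with sandwich identities such as $\hat P_K^N\tilde Z_t\otimes\hat P_K^N\tilde Z_t=\hat P_K\bigl(P\tilde Z_t\otimes P\tilde Z_t\bigr)\hat P_K$.

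Two specific cautions if you pursue your version rather than the inversion trick. First, your assertion that $\widetilde{\mathcal B}=D_B^{-1}B_TD_B^{-1}$ converges to a positive-definite limit is not established (and is not needed by the paper): in the regime $d-b+\alpha\ge 1/2$ the stationary block only admits the order bound $O_p\bigl(T^{2\psi}(\log T)^{\mathds{1}\{\psi=1/2\}}\bigr)$ from Lemma 6(e) of Nielsen (2010) — note the log factor at the boundary — and no joint weak limit for the cross blocks is derived; your argument should be rewritten so that only these $O_p$ bounds are required. Second, your eigenvector-localization/Ostrowski step, which you yourself flag as the main obstacle, is precisely the part the paper's reformulation dissolves; since the rest of your blocks and rates match the paper's (including the divergence rate $T^{\min\{2\alpha,\,1-2(d-b)\}}$, which is consistent with, though sharper than, what the paper states), adopting the inverted pencil would close the remaining gaps with no extra machinery.
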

	The asymptotic results given in Proposition~\ref{prop4} enable us to implement a more detailed statistical inference on $q_{\dnn}$ beyond consistent estimation of it. Specifically, let us consider the following null and alternative hypotheses:
	\begin{equation}\label{eqhypo2}
		H_0: q_{\dnn} = q \quad \text{v.s.} \quad 	H_1: q_{\dnn} < q.
	\end{equation}
	Based on the asymptotic results given in Proposition~\ref{prop4}, we know that, for example,  
	\begin{align}
		\Lambda_{q,\alpha}^{0} = T^{2\alpha} \max_{1\leq j\leq q} \hat{\nu}_j  \quad \text{and} \quad	\Lambda_{q,\alpha}^{1} = T^{2\alpha}\sum_{j=1}^{q} \hat{\nu}_j \label{stats}
	\end{align}
	have well-defined limiting distributions under $H_0$ while they diverge to infinity under $H_1$. Using these statistics, we may easily evaluate the plausibility of the null hypothesis. Moreover, as an alternative way to estimate $q_{\dnn}$, we may sequentially examine~\eqref{eqhypo2} for $q = q_{\max},q_{\max}-1,\ldots,1$, where $q_{\max}$ is a reasonable upper bound. In practice, $q_{\max}$ may be chosen based on the estimated eigenvalues of $\hat{C}_{\bar{Z}}$ (see  Remark~\ref{remchoose}) or can be set to $\hat{q}_{\dnn}+\epsilon$ using the modified eigenvalue ratio estimator of \cite{Li2020} and small finite integer $\epsilon$. This sequential procedure is consistent in the following sense:

	\begin{corollary} \label{cor1prop4} 
		Suppose that the assumptions in Proposition~\ref{prop4} hold, and let $\bar{q}_{\dnn}$ be the estimator obtained from this sequential procedure with fixed significance level $\eta>0$. Then, 
		\begin{equation*}
			\mathbb{P}(\bar{q}_{\dnn}=q_{\dnn}) \to 1- \eta \cdot \mathds{1}\{q_{\dnn}\geq 1\}. 
		\end{equation*}
		By letting $\eta \to 0$ as $T \to \infty$, we have $\mathbb{P}(\bar{q}_{\dnn}=q_{\dnn}) \to 1$ for all possible values of $q_{\dnn}$. 
	\end{corollary}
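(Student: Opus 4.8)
\textbf{Proof proposal for Corollary~\ref{cor1prop4}.}

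The plan is to analyze the sequential testing procedure step by step, using the asymptotic results of Proposition~\ref{prop4} to control the behavior of the test statistics at each stage. First I would describe the procedure explicitly: starting from $q = q_{\max}$, we test $H_0 : q_d = q$ against $H_1 : q_d < q$ using, say, the statistic $\Lambda_{q,\alpha}^0$ or $\Lambda_{q,\alpha}^1$ from~\eqref{stats}; we reject if the statistic exceeds the critical value $c_{q,\eta}$ (the $(1-\eta)$-quantile of the limiting distribution under $H_0$), and in that case decrement $q$ and test again; we set $\bar{q}_d$ equal to the first value of $q$ at which we fail to reject. The key observation is that the event $\{\bar{q}_d = q_d\}$ is the intersection of the events ``reject at every $q$ with $q_d < q \leq q_{\max}$'' and ``fail to reject at $q = q_d$.''

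The main steps are then as follows. First, for each $q > q_d$, I would show that $\mathbb{P}(\text{reject at step } q) \to 1$: under such a $q$, the test statistic involves at least one eigenvalue $\hat{\nu}_{q_d + j}$ with $j \geq 1$, and by the second display of Proposition~\ref{prop4} we have $T^{2\alpha}\hat{\nu}_{q_d+j} \pto \infty$, so the statistic diverges and exceeds any fixed critical value with probability tending to one. Since $q_{\max}$ is finite (and fixed, or an upper bound we can take to be attained with probability $\to 1$), the probability of rejecting at \emph{all} such finitely many steps also tends to one by a union bound over the finitely many ``failure'' events. Second, at $q = q_d$ (assuming $q_d \geq 1$, so that the procedure actually reaches this step and there is something to test), the statistic $T^{2\alpha}\Lambda_{q_d,\alpha}^{\bullet}$ converges in distribution to the corresponding functional of the generalized eigenvalues $(\nu_1,\ldots,\nu_{q_d})$ in~\eqref{eqprop4}, so by construction of the critical value $\mathbb{P}(\text{fail to reject at } q_d) \to 1 - \eta$. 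Combining these, $\mathbb{P}(\bar{q}_d = q_d) \to (1)(1-\eta) = 1 - \eta$ when $q_d \geq 1$. When $q_d = 0$, the procedure rejects at every step $q = q_{\max}, \ldots, 1$ with probability $\to 1$ and terminates at $\bar{q}_d = 0$ by convention, giving $\mathbb{P}(\bar{q}_d = 0) \to 1$; this is exactly the indicator adjustment $1 - \eta \cdot \mathds{1}\{q_d \geq 1\}$. Finally, for the last sentence, if $\eta = \eta_T \to 0$ slowly enough that the critical value $c_{q_d, \eta_T}$ still grows slower than $T^{2\alpha}\hat{\nu}_{q_d+j}$ diverges (which holds since the divergence is polynomial in $T$ while $c_{q_d,\eta_T}$ depends only on quantiles of a fixed distribution and thus grows at most at a rate we control), the first-type error vanishes while rejection at the over-specified steps still occurs, yielding $\mathbb{P}(\bar{q}_d = q_d) \to 1$ for every value of $q_d$.

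I expect the main obstacle to be the careful bookkeeping around the step $q = q_d$ and the role of $q_{\max}$: one must verify that the procedure indeed reaches step $q_d$ with probability tending to one (which follows from the consistency of rejection at all larger steps), and one must be slightly careful that $q_{\max}$ is chosen as a genuine upper bound, i.e., $\mathbb{P}(q_{\max} \geq q_d) \to 1$ — if $q_{\max}$ is random (e.g., $\hat{q}_d + \epsilon$), this requires invoking Corollary~\ref{cor1}\ref{cor1a}. A secondary technical point is the $\eta \to 0$ argument: one needs the rate at which $\eta_T \to 0$ to be compatible with the $T^{2\alpha}$ divergence rate, so that the critical value does not outpace the statistic at the over-specified steps; since $T^{2\alpha}\hat{\nu}_{q_d+j} \pto \infty$ at a rate dictated by the projection errors and the fractional integration, a sufficiently slow $\eta_T \to 0$ (e.g., any $\eta_T$ bounded below by a negative power of $T$ smaller than $2\alpha$) will do, and I would state this as the precise requirement rather than grinding through the constants.
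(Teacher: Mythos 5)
Your proposal is correct and follows exactly the argument the paper has in mind: the paper's own proof of Corollary~\ref{cor1prop4} is omitted as an immediate consequence of Proposition~\ref{prop4}, and the details you supply (divergence of $T^{2\alpha}\hat{\nu}_{q_d+j}$ forcing rejection at every over-specified step, convergence in distribution at $q=q_d$ giving acceptance probability $1-\eta$, the $q_d=0$ case handled by termination at zero, and a slowly vanishing $\eta_T$ for consistency) are precisely the standard bookkeeping that fills in that omission. No gaps worth flagging; your caveats about $q_{\max}$ being a valid upper bound and the rate at which $\eta_T\to 0$ are sensible refinements of what the paper leaves implicit.
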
 
\commRV{
Even though the estimator of $q_{\dnn}$ in Corollary~\ref{cor1prop4} is constructed using sequential tests, it is important to note that the testing procedure guarantees correct size asymptotically. This property has been well-documented in previous works, such as \cite{Johansen1995}, \cite{Nielsen2010}, and \cite{Nielsen2019}, which studied similar procedures.} 
	Our proof of Proposition~\ref{prop4} also shows that the first $q_{\dnn}$ eigenvectors computed from~\eqref{eqv} converge to a random orthonormal basis of $\mathcal H_N$. Therefore, we can also obtain the following: 
	\begin{corollary}\label{cor2prop4}
		Suppose that the assumptions in Proposition~\ref{prop4} hold and \commRV{let $\tilde{q}_{\dnn}$ be any consistent estimator of $q_{\dnn}$}. Then,
		\begin{equation*}
			\widetilde{P} = \sum_{j=1}^{\tilde{q}_{\dnn}} \hat{w}_j \otimes \hat{w}_j  \pto  P.
		\end{equation*}
	\end{corollary}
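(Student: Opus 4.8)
The plan is to read off Corollary~\ref{cor2prop4} from the eigenvector convergence stated just before it --- that the first $q_d$ generalized eigenvectors $\hat w_1,\dots,\hat w_{q_d}$ obtained from~\eqref{eqv} converge to a (random) orthonormal basis of $\mathcal H_N$ --- after a routine truncation that removes the randomness of $\tilde{q}_d$. For the truncation, note that $q_d$ is a fixed nonnegative integer and $\tilde{q}_d\pto q_d$, so $\mathbb P(\tilde{q}_d=q_d)\to1$; writing $\widetilde P^\ast:=\sum_{j=1}^{q_d}\hat w_j\otimes\hat w_j$, we have $\widetilde P=\widetilde P^\ast$ on $\{\tilde{q}_d=q_d\}$, while on the complementary event $\|\widetilde P\|_{\mathcal L_{\mathcal H}}$ is bounded in probability because each $\hat w_j$ has unit norm and (by the next step) converges. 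A union bound then reduces the claim to $\widetilde P^\ast\pto P$.

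For $\widetilde P^\ast\pto P$ I would start from the asymptotic separation in Proposition~\ref{prop4}: $T^{2\alpha}\hat\nu_j=O_p(1)$ for $j\le q_d$ but $T^{2\alpha}\hat\nu_{q_d+j}\pto\infty$ for $j=1,\dots,K-q_d$, so the group of the $q_d$ smallest generalized eigenvalues of~\eqref{eqv} is asymptotically isolated and the associated $q_d$-dimensional eigenspace is stable. The proof of Proposition~\ref{prop4} delivers the joint convergence in probability, up to orientation, of $(\hat w_1,\dots,\hat w_{q_d})$ to some $(w_1,\dots,w_{q_d})$ that is, almost surely, an orthonormal basis of $\mathcal H_N$; the limit is random only because the limiting problem~\eqref{eqprop4} is driven by the Type~II fractional Brownian motions $\bar B_d$ and $\widetilde B_{d+\alpha}$. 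Since $(h_1,\dots,h_{q_d})\mapsto\sum_j h_j\otimes h_j$ is continuous from $\mathcal H^{q_d}$ into $\mathcal L_{\mathcal H}$, the continuous mapping theorem gives $\widetilde P^\ast\pto\sum_{j=1}^{q_d} w_j\otimes w_j$; and any orthonormal basis of $\mathcal H_N=\ran P$ generates precisely the orthogonal projection $P$ (independently of which basis is realized), so $\widetilde P^\ast\pto P$.

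Should one wish to avoid using that eigenvector limit as a black box, it can be re-derived along the lines of the proof of Proposition~\ref{prop3}: pass to the (random) coordinates that $\hat P_K$ induces between $\ran P$ and $\ran(\hat P_K-P)\subseteq\mathcal H_S$ inside $\ran\hat P_K$ --- legitimate because $\hat P_K\pto P+P_K^{\mathcal H_S}$ for a fixed rank-$(K-q_d)$ projection $P_K^{\mathcal H_S}\subseteq\mathcal H_S$, by Proposition~\ref{prop3}, \eqref{eqconsis2} and the gap condition --- and use the joint functional central limit theorem for the normalized sample paths of $\hat P_K Z_t$ and $\hat P_K\widetilde Z_t=\hat P_K\Delta_+^{-\alpha}\bar Z_t$ to see that, after the respective scalings $T^{-2d}$ and $T^{-2(d+\alpha)}$, the $\mathcal H_N$-blocks of $A_T$ and $B_T$ converge to the nondegenerate random operators underlying~\eqref{eqprop4}, while the $\mathcal H_S$- and cross-blocks vanish under these normalizations --- the inequality $d-b<1/2$ (stationarity of $(I-P)Y_t$) being exactly what forces the stationary-block generalized eigenvalues to exceed the $O_p(T^{-2\alpha})$ nonstationary-block ones. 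This confines $\hat w_1,\dots,\hat w_{q_d}$ to $\mathcal H_N$ asymptotically, and with $\dim\mathcal H_N=q_d$ and the normalization in~\eqref{eqv} they become an asymptotically orthonormal basis of $\mathcal H_N$.

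The step I expect to be the genuine obstacle is exactly the one handled inside the proof of Proposition~\ref{prop4}: showing that these generalized eigenvectors, suitably normalized, become \emph{$\mathcal H$-orthonormal} in the limit (not merely orthogonal in the $B_T$-geometry natural to a generalized eigenproblem) and span exactly $\mathcal H_N$. The delicacy is that $B_T$ is nearly singular relative to $A_T$ along $\mathcal H_N$ --- their ratio there being $O_p(T^{-2\alpha})$ --- so the generalized eigenproblem is ill-conditioned in the limit; one must control the blocks of $A_T$ and $B_T$ in coordinates adapted to the \emph{random} projection $\hat P_K$, propagate the convergence of $\hat P_K$ through, and invoke a joint functional central limit theorem for $\hat P_K Z_t$ and $\hat P_K\widetilde Z_t$. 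Granting that, Corollary~\ref{cor2prop4} itself reduces to the continuous-mapping step and the $\tilde{q}_d=q_d$ truncation above.
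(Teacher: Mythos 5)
Your proposal is correct and follows essentially the same route as the paper: the paper's proof likewise invokes the eigenvalue/eigenvector convergence established inside the proof of Proposition~\ref{prop4} (the limiting eigenvalues of $\mathcal A_d^{-1}\mathcal A_{d+\alpha}$ being almost surely distinct, so Lemma 4.3 of Bosq (2000) gives convergence of $\hat{w}_1,\ldots,\hat{w}_{q_d}$ to a random orthonormal basis of $\mathcal H_N$), after which $\sum_{j=1}^{\tilde q_d}\hat w_j\otimes\hat w_j\pto P$ follows by continuity. Your explicit truncation on the event $\{\tilde q_d=q_d\}$ is left implicit in the paper but is the same idea, so there is no substantive difference in approach.
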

	Suppose that practitioners are only interested in consistent estimation of $\mathcal H_N$ or $\mathcal H_S$; they then might prefer to use the eigenvalue-ratio estimator $\hat{q}_{\dnn}$ developed in Section~\ref{sec_eresti} since it is much easier to implement. However, our simulation study shows that $\bar{q}_{\dnn}$ substantially outperforms $\hat{q}_{\dnn}$. Since the estimation of $\mathcal H_N$ (or $\mathcal H_S$) can be affected by inaccuracy in the estimator of its dimension, this result \commWS{supports} the use of the testing procedure given in Corollary~\ref{cor1prop4} in practice.   

\begin{remark} \label{remdalphaadded}
In Corollary~\ref{cor1prop4}, $q_{\max}$ is required to be a reasonable upper bound for $q_{\dnn}$. As suggested by \citet[Remark 6]{Nielsen2019} for their testing procedure to determine the number of I(1) stochastic trends, if the testing procedure yields $\bar{q}_{\dnn}=q_{\max}$, it is advisable to restart with a higher value of $q_{\max}$ to mitigate the risk of selecting a value that is too small.
\end{remark}

\begin{remark} \label{remdalpha}
	The limiting distribution given in~Proposition~\ref{prop4} depends on $\alpha$ and $\dn$.  Note that $\dn$ is an unknown parameter of interest. Therefore, in implementing the proposed test in practice, $\dn$ needs to be replaced by a consistent estimator of $\dn$, such as the local Whittle estimator that we will consider later in Section~\ref{sec_est_d}. On the other hand, $\alpha$ is known and needs to be chosen by the researcher. Even if our asymptotic theory allows for any $\alpha>0$, it would be prudent to use $\alpha$ which is not very close to $0$ given the nature of the test. In our simulation study, we consider an ad-hoc choice $\alpha=0.5$ and compute the test statistics using $T\hat{\nu}_j$ for $j=1,\ldots,q_{\dnn}$. We find that the test with this choice of $\alpha$ performs reasonably and present a sensitivity analysis in Tables~\ref{tab1addrv1},~\ref{tabsizepowerrv1} and~\ref{tabsizepowerrv2}.   
\end{remark}

\begin{remark} \label{remadd}
	If we consider a finite-dimensional Euclidean space setting, our test based on $\Lambda_{q,\alpha}^1$ reduces to the test of \cite{Nielsen2010} developed for fractionally cointegrated time series. Even if there are some moderate differences in the cointegrating properties assumed in the present paper and that of \cite{Nielsen2010} (e.g., in that paper, the considered time series is written as the sum of the nonstationary and asymptotically stationary components), our tests developed in this section may be viewed as generalizations of \citepos{Nielsen2010}'s test to some degree.  
\end{remark}


\subsection{Decomposition of $\mathcal H_{LRD}$ and $\mathcal H_{SRD}$} \label{sec_further}

We, in this section, consider the estimation of $\mathcal H_{LRD}$ and $\mathcal H_{SRD}$ in the case where $\mathcal H_{S}$ can be further decomposed as in Assumption~\ref{assum2add}; of course, this requires a consistent estimator of $\mathcal H_{S}$ in advance. The variance-ratio test developed in Section~\ref{sec_vrtest} cannot be directly used for this problem since it requires nonstationarity of the underlying time series. As an alternative method, we provide a consistent estimator of $q_{\dss}$, similar to the eigenvalue-ratio estimator considered in Section~\ref{sec_eresti}. 

Suppose that $P$ is known. We then know that the long-run variance of $\langle Y_t,v\rangle$ for $v \in \mathcal H_{LRD} = \ran Q (=\ran Q(I-P))$ is unbounded while that of $\langle Y_t, \tilde{v}\rangle$ for $\tilde{v} \in  \mathcal H_{SRD} = \ran (I-Q) (I-P)$ is bounded \commRV{(see e.g., Section 2.1 of \cite{Li2020a})}. Using this property, we may distinguish $v \in \mathcal H_{LRD}$ from any element in $\mathcal H_{SRD}$. Our proposed estimator of $q_{\dss}$ is obtained by extending this idea, and then $\mathcal H_{LRD}$ can also be estimated by the span of certain $q_{\dss}$ eigenvectors as in Section~\ref{sec_eresti}. \commWS{Of course, in practical applications, $P$ is unknown. This issue can be addressed by replacing $P$ with its consistent estimator, as indicated by the subsequent asymptotic results.}

Let $\Lambda_0$ denote the operator defined by 
\begin{equation*}
	\Lambda_0 =  \sum_{s=-\infty}^\infty \mathbb{E}[(I-Q) (I-P) Y_{t-s}\otimes (I-Q) (I-P) Y_t],
\end{equation*}
which is the population long-run covariance of the SRD component of $Y_t$ and a well-defined bounded linear operator. We also let $\hat{\Lambda}$ be the sample operator defined by
\begin{equation} \label{eqslrv}
	\hat{\Lambda} =  \sum_{s=-T+1} ^{T-1}  \left (1-\frac{|s|}{h}\right ) \hat{C}_s,
\end{equation}  
where 	
\begin{align*}
	\hat{C}_s = 
	\begin{cases}
		T^{-1}\sum_{t=s+1} ^T   \bar{Z}_{t-s} \otimes  \bar{Z}_t, \quad &\text{ if } s \geq 0,\\
		T^{-1}\sum_{t=-s+1} ^T  \bar{Z}_t\otimes   \bar{Z}_{t+s}, \quad &\text{ if } s < 0.
	\end{cases} 
\end{align*} 
We then establish the following result:
\begin{proposition} \label{prop6} 
	Suppose that Assumption~\ref{assum2add} holds, $h=o(T^{1/2})$, $K$ is a finite integer satisfying $K>q_{\dss}$ and the $(K-q_{\dss})$-th largest eigenvalue of $\Lambda_0$ is nonzero and distinct from the next one. Let $(\hat{\mu}_1,\ldots,\hat{\mu}_K)$ be the ordered (from the largest) eigenvalues of $(I-\overline{P})\hat{\Lambda}(I-\overline{P}^\ast)$ for any  $\overline{P} \pto P$ as $T \to\infty$. Then the following hold: 
	\begin{enumerate}[(i)] 
		\item\label{eqdel1} ${\hat{\mu}_j}/{\hat{\mu}_{j+1}}  \pto \infty$ if $j=q_{\dss}$ while ${\hat{\mu}_j}/{\hat{\mu}_{j+1}}  = O_p(1)$ if $j\neq q_{\dss}$.
		\item The corresponding eigenvectors $(\hat{v}_1,\ldots,\hat{v}_{q_{\dss}})$ of $(I-\overline{P})\hat{\Lambda}(I-\overline{P}^\ast)$ satisfy that 
		\begin{equation*}
			\sum_{j=1}^{{q}_{\dss}} \hat{v}_j \otimes \hat{v}_j   \pto  Q. 
		\end{equation*}
	\end{enumerate}
\end{proposition}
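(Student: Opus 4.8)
The plan is to follow the architecture of the proof of Proposition~\ref{prop3}: reduce the feasible operator $(I-\overline{P})\hat{\Lambda}(I-\overline{P}^{\ast})$ to its oracle counterpart $(I-P)\hat{\Lambda}(I-P)$, exhibit inside the latter a single term whose scale grows with the bandwidth $h$ and whose range is $\mathcal H_{LRD}$, and then read off the two claims from a spectral-perturbation argument as in Proposition~\ref{prop3}.

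\emph{Step 1: plug-in reduction.} Since $(I-P)(h_1\otimes h_2)(I-P)=((I-P)h_1)\otimes((I-P)h_2)$, the oracle operator $(I-P)\hat{\Lambda}(I-P)$ is exactly the Bartlett-type long-run covariance estimator built from the stationary part $(I-P)\bar{Z}_t$, so that the heavy nonstationary block of $\hat{\Lambda}$ is annihilated identically. Writing $I-\overline{P}=(I-P)+(P-\overline{P})$ and expanding gives $(I-\overline{P})\hat{\Lambda}(I-\overline{P}^{\ast})$ as $(I-P)\hat{\Lambda}(I-P)$ plus three remainders, each carrying a factor $P-\overline{P}$. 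The worst, $(P-\overline{P})\hat{\Lambda}(P-\overline{P})^{\ast}$, has norm at most $\|P-\overline{P}\|^{2}\,\|\hat{\Lambda}\|$; using $\|\hat{\Lambda}\|=O_p(hT^{2d})$ --- which follows from $\max_{t}\|P\bar{Z}_t\|=O_p(T^{d-1/2})$ and the Type~II fractional Brownian motion asymptotics already exploited in Proposition~\ref{prop3} and Remark~\ref{remev1} --- together with the convergence rate of $\overline{P}$ to $P$ (which is $O_p(T^{-b})$ for the estimators $\hat{P}$ and $\widetilde{P}$ of Sections~\ref{sec_eresti}--\ref{sec_vrtest}) and the restriction $h=o(T^{1/2})$, this remainder is $O_p(hT^{2(d-b)})$; the two mixed remainders $(I-P)\hat{\Lambda}(P-\overline{P})$ and its adjoint admit the same bound because $I-P$ also kills the heaviest block of $\hat{\Lambda}$ from the left. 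As Step~2 shows, these remainders are negligible relative to the leading term since $hT^{2(d-b)}=o(Th^{2(d-b)})$.

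\emph{Step 2: leading term and perturbation.} Under Assumption~\ref{assum2add}, $(I-P)\bar{Z}_t$ splits (up to its sample mean) into the LRD component $\Delta^{-(d-b)}Q(I-P)X_t$, which has memory $d-b\in(0,1/2)$ and lies in $\mathcal H_{LRD}$, and the SRD component $(I-Q)(I-P)X_t$, which lies in $\mathcal H_{SRD}$; accordingly $(I-P)\hat{\Lambda}(I-P)$ splits into an LRD block, an SRD block and two cross blocks. The SRD block is a Bartlett estimator of the long-run covariance of an SRD functional process; it stays at the baseline order $T$, with its $T$-normalized version converging to $\Lambda_0$ restricted to $\mathcal H_{SRD}$. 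The LRD block is a Bartlett estimator applied to a long-memory functional process: by the functional counterpart of the classical scalar fact that such an estimator applied to a series of memory $\delta\in(0,1/2)$ has exact order $h^{2\delta}$, its norm is $\asymp Th^{2(d-b)}$ and $(Th^{2(d-b)})^{-1}$ times it converges in probability to a nonnegative operator of rank $q_{d-b}$ with range $\mathcal H_{LRD}=\ran Q(I-P)$, the low-frequency content being carried by $Q(I-P)\sum_{j}\psi_j$. The two cross blocks are $o_p(Th^{2(d-b)})$, hence negligible against the LRD block, and --- the LRD block being nondegenerate on $\mathcal H_{LRD}$ --- they do not lift the eigenvalues living on $\mathcal H_{SRD}$ above the baseline (a Schur-complement cancellation); this is where the hypothesis that the $(K-q_{d-b})$-th eigenvalue of $\Lambda_0$ is nonzero and distinct from the next one is used, mirroring the analogous hypothesis in Proposition~\ref{prop3}. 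Combining Steps~1 and~2, the first $q_{d-b}$ eigenvalues of $(I-\overline{P})\hat{\Lambda}(I-\overline{P}^{\ast})$ are $\asymp Th^{2(d-b)}$ with pairwise ratios $O_p(1)$, while the $(q_{d-b}+1)$-st through $K$-th are $\asymp T$ with $T$-normalized limits equal to positive eigenvalues of $\Lambda_0$; since the two conclusions involve only ratios of eigenvalues and spectral projections, the common factor is immaterial. This gives $\hat{\mu}_{q_{d-b}}/\hat{\mu}_{q_{d-b}+1}\asymp h^{2(d-b)}\pto\infty$ and $\hat{\mu}_j/\hat{\mu}_{j+1}=O_p(1)$ for $j\neq q_{d-b}$, which is~(i); and, the gap between the $q_{d-b}$-th eigenvalue and the remaining ones diverging, the spectral-projection argument used for the eigenvector part of Proposition~\ref{prop3} gives $\sum_{j=1}^{q_{d-b}}\hat{v}_j\otimes\hat{v}_j\pto Q(I-P)$, which is~(ii).

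\emph{Main obstacle.} The crux is Step~2 --- the long-memory asymptotics of the Bartlett-type long-run covariance estimator in the functional setting: that $(Th^{2(d-b)})^{-1}$ times the LRD block converges to a rank-$q_{d-b}$ operator whose range is exactly $\mathcal H_{LRD}$, and that the SRD block, the cross blocks and the nonstationary residual leaking through $P-\overline{P}$ are all of strictly smaller order than the LRD block. Step~1 is more mechanical, but its bookkeeping is precisely where the bandwidth condition $h=o(T^{1/2})$ and the convergence rate of $\overline{P}$ get consumed.
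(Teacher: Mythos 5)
Your overall architecture mirrors the paper's: reduce the feasible operator to the oracle $(I-P)\hat{\Lambda}(I-P)$, let the LRD block dominate at the Bartlett rate $h^{2(d-b)}$ with limiting range $\mathcal H_{LRD}$, let the SRD block converge to $\Lambda_0$, and finish with an eigenvalue/eigenprojection perturbation argument as in Proposition~\ref{prop3}. However, there is a genuine gap exactly where you yourself flag ``the main obstacle'': the entire analytic content of the proposition is the limit theory for the kernel long-run covariance estimator built from the stationary part --- namely that $h^{-2(d-b)}(I-P)\hat{\Lambda}(I-P)$ converges in probability to a rank-$q_{d-b}$ operator whose eigenvectors span $\ran Q(I-P)$, and that the SRD block converges to $\Lambda_0$ under $h=o(T^{1/2})$. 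You assert this as ``the functional counterpart of the classical scalar fact'' without proof or a citable source, whereas the paper obtains precisely these two limits from Proposition 2(i) of \cite{Li2021} (see also Proposition 2 of \cite{Li2020a}) and Theorem 4.2 of \cite{Hoermann2010}; the bandwidth condition $h=o(T^{1/2})$ is consumed by those cited results, not (as you suggest) mainly by the Step-1 bookkeeping. Without either a proof of these block limits or an appeal to such results, Steps 1--2 do not add up to a proof.

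A second, more local problem is your Step 1: you bound the plug-in remainders via $\|P-\overline{P}\|^2\|\hat{\Lambda}\|$ with $\|\hat{\Lambda}\|=O_p(hT^{2d})$ and a claimed rate $\|P-\overline{P}\|=O_p(T^{-b})$. The proposition is stated for \emph{any} $\overline{P}\pto P$ --- no rate is assumed --- and no such rate for $\hat{P}$ or $\widetilde{P}$ is established anywhere in the paper (only consistency is proved), so your argument would at best prove a weaker statement tied to a particular, unverified rate; with mere consistency your bound $o_p(1)\cdot O_p(hT^{2d})$ does not beat the leading order. The paper instead works directly with the normalized operators $h^{-2(d-b)}(I-\overline{P})\hat{\Lambda}(I-\overline{P})$ and, for the lower eigenvalues, with $(I-\hat{Q}_0)(I-\overline{P})\hat{\Lambda}(I-\overline{P})(I-\hat{Q}_0)$ where $\hat{Q}_0=\sum_{j=1}^{q_{d-b}}\hat{v}_j\otimes\hat{v}_j$, so that only $\overline{P}\pto P$ is invoked (admittedly briefly), and the eigenvalue condition on $\Lambda_0$ enters to keep $\hat{\mu}_K$ bounded away from zero and to make the relevant eigenprojections well defined (see Remark~\ref{remev2}), rather than through a Schur-complement cancellation as you describe. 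If you supply the two block limits (by proof or citation) and rework Step 1 so that it needs only consistency of $\overline{P}$, your outline becomes essentially the paper's proof.
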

Some direct consequences of Proposition~\ref{prop6} are given as follows:
\begin{corollary} \label{cor3}
	Let everything be as in Proposition~\ref{prop6}. Then the following hold.
	\begin{enumerate}[(i)]
		\item\label{cor3a} $	\hat{q}_{\dss}= \argmax_{1\leq j\leq K} \left(\frac{\hat{\mu}_j}{\hat{\mu}_{j+1}}\right)\pto  q_{\dss}$.
		\item\label{cor3b}	$\sum_{j=1}^{\hat{q}_{\dss}} \hat{v}_j \otimes \hat{v}_j  \pto  Q$ and $I - \overline{P} -	\sum_{j=1}^{\hat{q}_{\dss}} \hat{v}_j \otimes \hat{v}_j    \pto  (I-Q) (I-P)$.
	\end{enumerate}
\end{corollary}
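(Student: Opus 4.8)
The plan is to derive both parts directly from Proposition~\ref{prop6}, in exactly the way Corollary~\ref{cor1} is obtained from Proposition~\ref{prop3}. Since almost all of the work is carried out in Proposition~\ref{prop6}, what remains is bookkeeping with the random index $\hat q_{d-b}$ and an elementary operator identity.

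For part~\ref{cor3a}, I would first note that $(I-\overline{P})\hat{\Lambda}(I-\overline{P}^\ast)$ has finite rank, so the eigenvalue $\hat{\mu}_{K+1}$ entering the ratio $\hat{\mu}_K/\hat{\mu}_{K+1}$ is well defined as the $(K+1)$-th largest eigenvalue (this is the analogue of the situation in Proposition~\ref{prop3}, where $\hat C_{\bar Z}$ is finite rank). By Proposition~\ref{prop6}\ref{eqdel1}, $\hat{\mu}_{q_{d-b}}/\hat{\mu}_{q_{d-b}+1}\pto\infty$, while $\hat{\mu}_j/\hat{\mu}_{j+1}=O_p(1)$ for each of the finitely many indices $j\in\{1,\ldots,K\}\setminus\{q_{d-b}\}$; taking the maximum over this finite index set preserves the $O_p(1)$ bound. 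Hence for any $M>0$ the event $\{\hat{\mu}_{q_{d-b}}/\hat{\mu}_{q_{d-b}+1}>M>\max_{1\le j\le K,\,j\neq q_{d-b}}\hat{\mu}_j/\hat{\mu}_{j+1}\}$ has probability tending to one, and on that event the unique maximiser of $j\mapsto\hat{\mu}_j/\hat{\mu}_{j+1}$ over $1\le j\le K$ equals $q_{d-b}$. Therefore $\mathbb{P}(\hat{q}_{d-b}=q_{d-b})\to1$, which is exactly $\hat{q}_{d-b}\pto q_{d-b}$ because $\hat{q}_{d-b}$ is integer valued.

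For part~\ref{cor3b}, I would condition on the event $E_T=\{\hat{q}_{d-b}=q_{d-b}\}$, which satisfies $\mathbb{P}(E_T)\to1$ by part~\ref{cor3a}. On $E_T$ one has $\sum_{j=1}^{\hat{q}_{d-b}}\hat{v}_j\otimes\hat{v}_j=\sum_{j=1}^{q_{d-b}}\hat{v}_j\otimes\hat{v}_j$, and the latter converges in probability to $Q(I-P)$ by Proposition~\ref{prop6}(ii). A union bound over $E_T^c$ and the event $\{\|\sum_{j=1}^{q_{d-b}}\hat{v}_j\otimes\hat{v}_j-Q(I-P)\|_{\mathcal L_{\mathcal H}}>\varepsilon\}$ then yields $\sum_{j=1}^{\hat{q}_{d-b}}\hat{v}_j\otimes\hat{v}_j\pto Q(I-P)$. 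The second assertion follows by combining this with the hypothesis $\overline{P}\pto P$ and the triangle inequality, since $I-\overline{P}-\sum_{j=1}^{\hat{q}_{d-b}}\hat{v}_j\otimes\hat{v}_j\pto I-P-Q(I-P)=(I-P)-Q(I-P)=(I-Q)(I-P)$.

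I do not expect a genuine obstacle here: the content is essentially a clean corollary of Proposition~\ref{prop6}. The only points requiring care are (i) that $\hat{\mu}_{K+1}$ is meaningful, which uses finiteness of the rank of the relevant operator, and (ii) that the random upper limit $\hat{q}_{d-b}$ in $\sum_{j=1}^{\hat{q}_{d-b}}\hat{v}_j\otimes\hat{v}_j$ must be dealt with by first establishing part~\ref{cor3a} and then transferring the convergence across the high-probability event $E_T$, rather than arguing directly.
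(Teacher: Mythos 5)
Your proposal is correct and takes essentially the same route as the paper, whose proof of Corollary~\ref{cor3} simply notes that it follows immediately from Proposition~\ref{prop6}; you supply the standard details (argmax consistency from the divergent-versus-$O_p(1)$ ratio dichotomy in Proposition~\ref{prop6}\ref{eqdel1}, then transferring Proposition~\ref{prop6}(ii) across the high-probability event $\{\hat{q}_{d-b}=q_{d-b}\}$). Only tidy one quantifier: since $\max_{1\le j\le K,\, j\neq q_{d-b}}\hat{\mu}_j/\hat{\mu}_{j+1}$ is merely $O_p(1)$ (tight), the event you display does not have probability tending to one for every fixed $M$; rather, for each $\varepsilon>0$ choose $M=M_\varepsilon$ so that this maximum exceeds $M$ with probability at most $\varepsilon$, which together with $\hat{\mu}_{q_{d-b}}/\hat{\mu}_{q_{d-b}+1}\pto\infty$ still gives $\mathbb{P}(\hat{q}_{d-b}=q_{d-b})\to 1$.
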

In Proposition~\ref{prop6} and Corollary~\ref{cor3},  $\overline{P}$ may be replaced by $\hat{P}$ or $\widetilde{P}$ obtained earlier. The role of $K$ in the estimation of $q_{\dss}$ and $Q$ is somewhat similar to that as described in Remark~\ref{remev1}, which will be detailed in Remark~\ref{remev2} below.
\begin{remark} \label{remev2}
	In our proof of Proposition~\ref{prop6}, we show that  $ h^{-2\ds} (\hat{\mu}_1,\ldots,\hat{\mu}_{q_{\dss}})$ converge in probability to the eigenvalues of a well-defined linear operator while  $(\hat{\mu}_{q_{\dss}+1},\ldots,\hat{\mu}_{K})$ converge to some eigenvalues of $\Lambda_0$.  This shows why we require the $(K-q_{\dss})$-th largest eigenvalue of $\Lambda_0$ to be distinct from the next one; if there is no such a distinction, $\hat{\mu}_{K-1}/\hat{\mu}_{K} \pto \infty$, which is not desirable to establish consistency of $\hat{q}_{\dss}$. 
\end{remark}

\subsection{Estimation of the memory parameters}\label{sec_est_d}

As shown in Section~\ref{sec_vrtest}, a consistent estimator of the memory $\dn$ is necessary to implement our variance-ratio test in practice \phantomsection\label{rvlabel6}\commRV{(see Remark~\ref{remdalpha})}. Moreover, practitioners may be interested in $\dn$ and $\ds$ \commWS{for their own sake}. In this section, we briefly discuss estimation results for these memory parameters via the local Whittle method. A more detailed discussion of our estimation results can be found in Appendix~\ref{sec_appen_d}. 

For convenience, we let $\hat{d}_{LW}(z_t)$ denote the local Whittle estimator computed from a time series $z_t$ with tuning parameter $m$ (depending on the sample size $T$) and a proper range of admissible values (this range depends on $z_t$ and will be detailed in Appendix~\ref{sec_appen_d}). \commRV{We postpone  the detailed discussion on the local Whittle estimation of the memory parameter to Section~\ref{sec_lw}}. 

\subsubsection{Estimation of $\dn$}\label{sec_est_d1}
\commRV{With a simplifying assumption that  $\psi_j = \phi_j A$ for some $\phi_j \in \mathbb{R}$ and $A \in \mathcal L_{\mathcal H}$ \phantomsection\label{rvlabel8}(\commRV{this assumption might look restrictive, but it is still more general than the assumption, requiring scalar coefficients, in the recent article by \cite{Li2021} that develops the local Whittle estimator for  LRD functional time series)} and some standard regularity conditions imposed on the time series $\langle	X_t ,v \rangle$  for $v$ satisfying $\mathbb{P}(v \notin  \mathcal H_{S})=1$, we note that for any $v \in \mathcal H$, $\langle Z^0_t,  v\rangle$ is I$(\dn)$ as long as $v \notin \mathcal H_{S} =\ran (I-P)$. Given that $\langle Z^0_t,v \rangle $ is a univariate I$(\dn)$ process, our goal reduces to the estimation of the memory parameter of $\langle Z^0_t,v \rangle$. 

We establish that}
\begin{equation}\label{eqdhat1}
	\hat{d}_{LW}(\langle	{Z}^0_t ,v \rangle) \pto  \dn  
\end{equation}
for $\dn \in (1/2,1]$ if $m$ grows with an appropriate rate  (see Proposition~\ref{prop1}(\ref{prop1a})).  Unfortunately, $\hat{d}_{LW}(\langle	{Z}^0_t ,v \rangle)$ is not consistent if  $\dn>1$  (see Proposition~\ref{prop1}(\ref{prop1b})), but in this case, we may use the following result for consistent estimation:
\begin{equation}\label{eqdhat1a}
	1+ \hat{d}_{LW}(\langle	\Delta {Z}_t ,v \rangle) \pto  \dn, 
\end{equation}
where $\dn$ can be all possible values in $(1/2, 3/2)$ (see Proposition~\ref{prop1}(\ref{prop1c})).

For the consistency results given in~\eqref{eqdhat1} and~\eqref{eqdhat1a},  $v$ is required not to be included in $ \mathcal H_{S}$ with probability one. Choosing such $v$ may not be difficult in practice since the probability that any $v$, randomly picked from $\mathcal H$, is exactly orthogonal to $ \mathcal H_{N}$ is zero. In practical implementation, we may conveniently set $v = \sum_{j=1}^J a_j v_j$ for some orthonormal set $\{v_j\}_{j=1}^J$ (e.g.,  the first $J$ elements of the Legendre polynomial basis of $\mathcal H$) and nonzero coefficients $\{a_j\}_{j=1}^J$. This choice is valid as long as at least one $v_j$ is not exactly orthogonal to $ \mathcal H_{N}$, and thus will be valid even with a moderate integer $J$. {If we choose $v$ in this way and thus $v \notin \mathcal H^N$, the proposed estimator in \eqref{eqdhat1} or \eqref{eqdhat1a} might underestimate $\dn$ in a finite sample since $\dn$ is the maximal memory of the time series $\langle Y_t,v \rangle$, which is as expected from the discussion in \cite{Li2020} on their infeasible local Whittle estimator requiring a choice of $v \in \mathcal H_N$. 
To avoid this possibility in practice, it seems prudent to consider a few different choices of $v$, say $v^{(\ell)}$ for $\ell=1,\ldots, L$, and then disregard choices that give us relatively smaller estimates. In our numerical studies in Section~\ref{sec_simulation}, we set $v=v^{(\ell)}$, which maximizes $\hat{d}_{LW} (\langle {Z}^0_t,v^{(\ell)} \rangle)$ from a finite set of $v^{(\ell)}$ constructed from the first few Legendre polynomial basis functions, and our simulation results show that this approach works well even though the functional time series is generated by a different basis system; see Section~\ref{sec_simulation}.}  

Asymptotic inference on $\dn$ can also be implemented; in particular, under some additional assumptions to be detailed in Appendix~\ref{sec_appen_d}, we may use the following result:
\begin{equation} \label{eqhat02}
	m^{1/2}(1+ \hat{d}_{LW}(\langle	\Delta {Z}_t ,v \rangle)  - \dn) \dto N(0,1/4);
\end{equation}
see Proposition~\ref{prop2add}. The asymptotic distribution of $\hat{d}_{LW}(\langle	{Z}^0_t ,v \rangle)$ can also be obtained, but it turns out to be dependent on the true value of $\dn$, which is not desirable in the practical use of the asymptotic result. Therefore,~\eqref{eqhat02} may be more convenient for practitioners. 

The simplifying assumption imposed on $\psi_j$ to obtain the results given in this section seems to be strong and significantly restricts the data generating process, but we conjecture that this assumption is not necessarily required; our simulation results show that our estimator performs well even if the assumption is not satisfied. The assumption is imposed only to ensure that $\langle X_t ,v \rangle$ is a stationary linear process. In more general cases where $\langle X_t ,v \rangle$ is allowed to be a stationary nonlinear process, we may conjecture from the results given by \cite{shao2007local} that the local Whittle estimator will be consistent if some additional assumptions are satisfied.  

\begin{remark}
	\cite{Li2020} provided a procedure to consistently estimate $\dn$. Let $\hat{v}_1$ be the eigenvector corresponding to the largest eigenvalue of $T^{-1} \sum_{t=1}^T Z^0_t \otimes Z^0_t$. Then Theorem 4.2 of \cite{Li2020} implies that the local Whittle estimator computed from $\langle {Z}_t^0,\hat{v}_1 \rangle$ with a proper range of admissible values converges to $\dn$ if $\dn \in (1/2,1]$; in the case where $\dn>1$, they proposed an integer-order differencing algorithm to estimate $\dn$.  Even if this estimator can be used in our model, our simulation results show that our estimator performs better than theirs.
\end{remark}

\subsubsection{Estimation of $\ds$}\label{sec_est_d2}

Estimation of the memory parameter of the LRD component, $\ds$, requires prior knowledge of $q_{\dnn}$ or its consistent estimator. However, as shown in the previous sections, we may construct a consistent estimator of $q_{\dnn}$, so it is assumed to be known in this section for simplicity. 

Let $\{\hat{v}_j\}_{j=q_{\dnn}+1}^\infty$ be the eigenvectors of $\hat{C}_{\bar{Z}}=T^{-1} \sum_{t=1}^T \bar{Z_t} \otimes \bar{Z}_t$ corresponding to the eigenvalues except for the first  $q_{\dnn}$ largest ones, and let $\{{v}_j\}_{j=q_{\dnn}+1}^\infty$ be the eigenvectors of $\mathbb{E}[(I-P){Y}_t\otimes (I-P){Y}_t]$. 
Then we may establish the following result under a similar set of assumptions employed for estimation of $\dn$: if the largest eigenvalue of $\mathbb{E}[(I-P){Y}_t\otimes (I-P){Y}_t]$ is distinct from the second one and $v_{q_{\dnn}+1}$ satisfies certain regularity conditions (to be detailed in Appendix~\ref{sec_appen_d2}),
\begin{equation} \label{eq001aa}
	\hat{d}_{LW}(\langle \bar{Z}_t,\hat{v}_{q_{\dnn}+1} \rangle)  \pto  \ds. 
\end{equation}
Under some additional conditions stated in Assumption~2$^\ast$ of \cite{Li2021}, we may also deduce the following  from Theorem 1 of \cite{Li2021}:
\begin{equation} \label{eq001aaa}
	m^{1/2}[\hat{d}_{LW}(\langle \bar{Z}_t,\hat{v}_{q_{\dnn}+1} \rangle) - \ds] \dto N(0,1/4).
\end{equation}
If the first $J$ largest eigenvalues of $\mathbb{E}[(I-P){Y}_t\otimes (I-P){Y}_t]$ are distinct, it can be shown that $\hat{v}_{q_{\dnn}+1}$ can be replaced by a linear combination of $\hat{v}_{q_{\dnn}+1}$ and the next $(J-1)$ eigenvectors, such as $\tilde{v} = \sum_{j=1}^{J} a_j \hat{v}_{q_{\dnn}+j}$ (see Appendix~\ref{sec_appen_d2}). In this case, we may also implement estimation of $\ds$ by considering a few different choices of $\tilde{v}$ as in Section~\ref{sec_est_d}. \commWS{As will be further  discussed in Section~\ref{sec_simulation} with a specific example, employing such a linear combination instead of relying solely on $\hat{v}_{q_{\dnn}+1}$ may be beneficial for (i) achieving more accurate estimation of $\ds$ and (ii) enhancing the accuracy of the estimators of the eigen-elements (see Section~\ref{sec_sim_result}).} 

\commRV{
\begin{remark} \label{remadd2}
For consistent estimation of $d_{\dss}$, we are required to have a consistent estimator of $q_{\dnn}$. More specifically, we need some vector $\hat{v}_j$ that converges to a vector of $\mathcal H_{LRD}$  (up to sign changes), and such a $\hat{v}_j$ can be determined from a consistent estimate $\bar{q}_{\dnn}$ obtained from our testing procedure. In finite samples, if  $\bar{q}_{\dnn} < q_{\dnn}$, then we expect that $\hat{d}_{\dss}$, obtained by replacing $q_{\dnn}$ with $\bar{q}_{\dnn}$, is close to 0.5, which may be used as evidence of underestimation of $\bar{q}_{\dnn}$. If $\bar{q}_{\dnn} > q_{\dnn}$,   $\hat{d}_{\dss}$ underestimates $d_{\dss}$ unless $\dim(\mathcal H_{LRD}) = q_{\dss} \geq  \bar{q}_{\dnn} - q_{\dnn} + 1$ and thus $v_{\bar{q}_{\dnn} + 1}$ converges to a vector $v \in \mathcal H_{LRD}$. As will be shown in Tables~\ref{tab1} and~\ref{tab1add}, our testing procedure seems to rarely underestimate $q_{\dnn}$ and the magnitude of overestimation ($\bar{q}_{\dnn}-q_{\dnn}$) is quite small; particularly, note that the reported relative frequency of the occurrence of $0 \leq \bar{q}_{\dnn}-q_N \leq 1$ is close to one for all the considered sample sizes.  Therefore, as long as $\mathcal H_{LRD}$ is multidimensional (i.e. $q_{\dss}\geq 2$), $\hat{d}_{\dss}$, obtained by replacing $q_{\dnn}$ with $\bar{q}_{\dnn}$, is expected to perform well.
\end{remark}
}

\section{Numerical studies}\label{sec_simulation}

\subsection{Monte Carlo Simulation studies}\label{sec_simulation1}

\subsubsection{Simulation data generating process (DGP)}

Let $(v_1,\ldots,v_{25})$ \commWS{form an orthonormal set,} where  $(v_1,\ldots,v_{q_{\dnn}})$ is \commWS{an} orthonormal basis of $\mathcal H_{N}$,  $(v_{q_{\dnn}+1},\ldots, v_{q_{\dnn}+q_{\dss}})$ is \commWS{an} orthonormal basis of $\mathcal H_{LRD}$ and the remaining vectors are contained in $\mathcal H_{SRD}$. 

We generate the nonstationary part of the time series $P(Y_t-Y_0)= \Delta^{-\dn}_+ P X_t$ as follows:
\begin{equation*}
	\Delta^{-\dn}_+ P X_t =  \Delta^{-\dn}_+  \sum_{j=1}^{q_{\dnn}} a^N_{j,t} v_j, \quad a^N_{j,t} \sim \text{ARMA}(1,1), 
\end{equation*}  
where each of the coefficients of $\text{ARMA}(1,1)$ processes is a uniform random variable supported on $[-0.15,0.15]$, \commWS{with no dependence on any other variables}. The LRD part of the time series $(I-P)Y_t  = \Delta^{-\ds} (I-P)X_t$ is generated as follows:
\begin{equation*}
	\Delta^{-\ds} (I-P)X_t =  \Delta^{-\ds}  \sum_{j=q_{\dnn}+1}^{q_{\dnn}+q_{\dss}+1} a^L_{j,t} v_j, \quad a^L_{j,t} \sim \text{ARMA}(1, 1),
\end{equation*} 
where ARMA(1,1) processes $a^L_{j,t}$ are similarly determined as   $a_{j,t}^{N}$. The stationary part $\tilde{X}_t = (I-Q) (I-P) X_t$ is generated by the following FARMA model with banded coefficient operators:
\begin{equation*}
	\tilde{X}_t = A \tilde{X}_{t-1} + \varepsilon_t + B \varepsilon_{t-1},
\end{equation*}
where $A$ and $B$ are defined as 
\begin{equation}
	\langle v_j, Av_k \rangle \sim u_{j,k,A}^S \mathds{1}\{|j-k|\leq 2\} \quad \text{and} \quad  \langle v_j, Bv_k \rangle \sim u_{j,k,B}^S\mathds{1}\{|j-k|\leq 2\} \label{equnif}
\end{equation}
for $q_{\dnn}+q_{\dss}+1 \leq j,k \leq 25$, with sequences of uniform random variables ${u_{j,k,A}^S}$ and ${u_{j,k,B}^S}$;  ${u_{j,k,A}^S}$ and ${u_{j,k,B}^S}$ are assumed to be supported on $[-0.15,0.15]$ and independent across $j$ and $k$ (as well as of any other variables). Moreover, $\varepsilon_t$ is generated by $\varepsilon_t=\sum_{j=1}^{20} a_j v_{q_{\dnn}+q_{\dss}+j}$, where $a_j \sim N(0,0.97^{j-1})$ for $j=1,\ldots,20$. 

We set $\dn=0.95$, $\ds=0.3$, $q_{\dnn}=3$, $q_{\dss} = 2$, and let $(v_1,\ldots,v_{25})$ be the orthonormal \commWS{set} obtained by first permuting the first $5$ Fourier basis functions and then adding the next 20 basis functions, which are randomly ordered. By doing so, we fix $\mathcal H_N \oplus \mathcal H_{LRD}$ to $\spn\{v_1,\ldots,v_5\}$, but let $\mathcal H_N$ and $\mathcal H_{LRD}$ be differently realized. In all of our simulation experiments, $\mu$ is set to the quadratic function defined by $\mu(s) =  -2(s-1/2)^2 + 0.5$ for $s \in [0,1]$. 

\subsubsection{Results} \label{sec_sim_result}

We examine finite-sample properties of various estimators and tests that are considered in the previous sections. We consider the following:
\begin{enumerate}[(i)]
	\item the estimators $\hat{q}_{\dnn}$ and $\bar{q}_{\dnn}$ of $q_{\dnn}$ (Table~\ref{tab1}); 
	\item the estimator $\hat{q}_{\dss}$ of $q_{\dss}$ (Table~\ref{tab2}); 
	\item the local Whittle estimators of $\dn$ and $\ds$ (Tables~\ref{tab3} and~\ref{tab4});
	\item the coverage probability difference, which is the absolute difference between empirical and nominal coverage probabilities, as well as the interval scores (see \cite{gneiting2007}) of the confidence intervals constructed from~\eqref{eqhat02} and~\eqref{eq001aaa} (Table~\ref{tab5}).
\end{enumerate}
\phantomsection\label{rv204}The bandwidth parameter $h$ used to compute the estimator (see Proposition~\ref{prop6} and Corollary~\ref{cor3}) is set to $h=\lfloor 1+T^{0.3} \rfloor$ or $\lfloor 1+T^{0.4} \rfloor$. These are ad-hoc choices employed to assess the sensitivity of the proposed estimator. \phantomsection\label{rv205} In order to obtain $\bar{q}_{\dnn}$, we consider the variance-ratio test with $\alpha=0.5$. The test results do not appear to be sensitive to moderate changes of $\alpha$ from $0.5$, as shown in Tables~\ref{tab1addrv1},~\ref{tabsizepowerrv1} and ~\ref{tabsizepowerrv2}. 
More detailed information on implementing our statistical methods can be found in each table. \phantomsection\label{rvlabel2}\commRV{Particularly, for the local Whittle estimation, we adopt the choice of $m$ as $\lfloor 1+T^{0.65} \rfloor$, as previously proposed by \cite{Li2020} in their study of nonstationary fractionally integrated functional time series.} Some additional simulation results, including sensitivity analysis of the local Whittle estimators to the choice of $m$ and the size-power properties of the variance-ratio test, are reported in Appendix~\ref{sec_sensitivity}. 

To summarize the results, the estimator $\bar{q}_{\dnn}$ obtained from our variance-ratio testing procedure outperforms the eigenvalue-ratio  estimator which is similar to the estimator of \cite{Li2020}. This performance gap seems huge, particularly in small samples, which makes $\bar{q}_{\dnn}$ attractive in practice where we do not always have sufficiently many observations.  Given that $q_{\dnn}$ and $P$ characterize the dominant part of the time series (see, e.g., \cite{Li2020}) and they are used in inferential problems of other parameters (such as $q_{\dss}$ and $\ds$), it may be recommended for practitioners to use our testing procedure. Note that $\hat{q}_{\dnn}$ underestimates $q_{\dnn}$ significantly in small samples while $\bar{q}_{\dnn}$ does so only slightly. As may be deduced from Corollary~\ref{cor1prop4} and the fact that we are employing a $5\%$ significance level, the relative frequency of underestimation for $\hat{q}_{\dnn}$ must approach 0.05 as $T$ increases, as shown in Table~\ref{tab1}.
On the other hand, $\hat{q}_{\dss}$ does not perform quite as well in small samples (the relative frequency of correct determination is only around $30\%$  when $T=200$), but Table~\ref{tab2} shows that its performance improves as the sample size increases. The local Whittle estimator~\eqref{eqhat02}, which we propose in Section~\ref{sec_est_d}, seems to perform better in small samples than the existing competitor developed by \cite{Li2020}; even if their difference seems to converge as $T$ gets larger, this result suggests that our estimator may be a better alternative in practice where the sample size is limited. Table~\ref{tab4} shows the performances of the estimator $\hat{d}_{LW}(\langle \bar{Z}_t,\tilde{v} \rangle)$ when $\tilde{v} = \hat{v}_{q_{\dnn}+1}$ and $\tilde{v}$ is set to a linear combination of $\hat{v}_{q_{\dnn}+1}$ and $\hat{v}_{q_{\dnn}+2}$. Our simulation results show that the estimator performs better in the latter case. This may be because, in the latter case, we use the information of the other I$(\ds)$ component characterized by $v_{q_{\dnn}+2}$ in this simulation setup. \commHS{Another reason may be found in the observation of \cite{Nielsen2019} in the I$(1)$/I$(0)$ system;} obtaining $\hat{v}_{q_{\dnn}+1}$ in this statistical test may be understood as pre-estimation of $v_{q_{\dnn}+1}$ such that $\langle Y_t,v_{q_{\dnn}+1} \rangle $ is I$(\ds)$, but this estimation may not be accurate in a finite sample. Thus, sometimes $\langle Y_t,\hat{v}_{q_{\dnn}+1+j} \rangle $ for some positive $j$ may behave more like an I$(\ds)$ process than  $\langle Y_t,\hat{v}_{q_{\dnn}+1} \rangle$ does.    Lastly, the empirical coverage rates and interval scores based on our proposed method for $\dn$ or $\ds$ 
overall seem to be better than those of its supposed competitor. \phantomsection\label{rvlabel1}\commRV{Figure~\ref{fig3} (resp.\ \ref{fig4}) displays the histograms of the estimates of $\dn$ (resp.\ $\ds$) obtained from the two methods. Notably, the histograms for our proposed method tend to be better centered around the true values and exhibit a decreased occurrence of extreme values, both for $\dn$ and $\ds$. This observation, coupled with the findings from  Tables~\ref{tab3} and~\ref{tab4}, suggests that our proposed methods are attractive, particularly in small samples.}

\begin{center}
	\tabcolsep 0.15in
	\begin{longtable}{@{\extracolsep{\fill}}lccccc@{}}
		\caption{Finite sample performance of the estimators of $q_{\dnn}$} \label{tab1}\\
		\toprule 
		& \multicolumn{5}{c}{Relative frequency of correct determination of $q_{\dnn}$}\\\midrule 
		$q_{\max}$ or $K$ & Method	& $T=200$ & $T=350$ & $T=500$  & $T=1000$  \\ \midrule 
		\endfirsthead
		\toprule
		\endhead
		\hline \multicolumn{6}{r}{{Continued on next page}} \\ 
		\endfoot 
		\endlastfoot
		$4$ 	&Proposed	&  0.800 &0.914& 0.933 &0.945\\ 
		&LRS-type &0.348& 0.621& 0.775 &0.924\\ \midrule 
		$5,6$ 	&Proposed	&0.796& 0.914& 0.933& 0.945\\ 
		&LRS-type & 0.348 &0.621& 0.775 &0.924 \\ \midrule
		& \multicolumn{5}{c}{Relative frequency of underestimation of $q_{\dnn}$}\\\midrule 
		$q_{\max}$ or $K$ & Method	& $T=200$ & $T=350$ & $T=500$  & $T=1000$  \\ \midrule 
		$4$, $5$, $6$ 	&Proposed	& 0.022 &0.029& 0.040& 0.052		\\ 
		&LRS-type & 0.651& 0.380& 0.225 &0.076 \\ \midrule 
	\end{longtable}
	\vspace{-.2in}
	\begin{justify}
		{\footnotesize Notes: Based on 2,000 Monte Carlo replications. The proposed estimator is obtained from the sequential application of the variance-ratio test based on $\Lambda_{s,\alpha}^0$ with $\alpha = 0.5$ and significance level $\eta = 0.05$. Moreover, $K$ is set to $q + 2$ for each $H_0:q_{\dnn} = q$, and $H_0:q_{\dnn} = q_{\max}$ is first examined in this procedure. The LRS-type estimator is the eigenvalue-ratio estimator with tuning parameter $K$, which is considered in Proposition~\ref{prop3}. As noted in Remark~\ref{remprop3}, the eigenvalue-ratio estimator given in Proposition~\ref{prop3} is not identical to the estimator in \cite{Li2020}, but the two are very similar and can be equivalent under some choice of tuning parameters. The reported frequencies are rounded to the third decimal place, and the results are reported in the same row if there are no differences in these rounded numbers.} 
	\end{justify}
\end{center}

\begin{center}
	\tabcolsep 0.166in
	\begin{longtable}{@{\extracolsep{\fill}}lccccc@{}}
		\caption{Finite sample performance of the estimator of $q_{\dss}$}\label{tab2} \\
		\toprule
		& \multicolumn{5}{c}{Relative frequency of correct determination of $q_{\dss}$}\\ \midrule
		$h$ 	&$K$&$T=200$ & $T=350$ & $T=500$ & $T=1000$  \\ \midrule 
		\endfirsthead
		\toprule
		\endhead
		\hline \multicolumn{6}{r}{{Continued on next page}} \\ 
		\endfoot 
		\endlastfoot
		$\lfloor 1+ T^{0.3} \rfloor$ & $4$  &  0.326& 0.422& 0.512& 0.657     \\ 
		&	$5$   & 0.262& 0.369& 0.481& 0.643		\\ 
		& 	$6$  & 0.234& 0.358 &0.472& 0.645
		\\\midrule 
		$\lfloor 1+ T^{0.4} \rfloor$ &	
		$4$  &   0.340 &0.456& 0.570 &0.740   \\
		&	$5$     & 0.276& 0.425& 0.538& 0.742\\ 
		& 	$6$  &  0.262& 0.411& 0.540& 0.753	\\ \midrule
		&		\multicolumn{5}{c}{Relative frequency of underestimation of $q_{\dss}$}\\\midrule
		$h$ 	&$K$&$T=200$ & $T=350$ & $T=500$ & $T=1000$  \\ \midrule 
		$\lfloor 1+ T^{0.3} \rfloor$ & $4$  &  0.398& 0.332 &0.270 &0.151\\ 
		&	$5$   &  0.376& 0.336&0.255& 0.164		\\ 
		& 	$6$  &   0.378 &0.353& 0.285 &0.186		\\\midrule 
		$\lfloor 1+ T^{0.4} \rfloor$ &	
		$4$  & 0.402 &0.331& 0.257& 0.127\\
		&	$5$     & 0.380& 0.322& 0.256& 0.133\\ 
		& 	$6$  &  0.376& 0.340& 0.276& 0.150		\\ \bottomrule
	\end{longtable}
	\vspace{-.2in}
	\begin{justify}
		{\footnotesize Notes: Based on 2,000 Monte Carlo replications. $\overline{P}$ is set to $\sum_{j=1}^{{q}_{\dnn}} \hat{v}_j \otimes \hat{v}_j$ (see~\eqref{eqde3}). $h$ is the bandwidth parameter used in~\eqref{eqslrv} and $K$ is a positive integer introduced in Proposition~\ref{prop6}.} 
	\end{justify}
\end{center}


\begin{center}
	\tabcolsep 0.1in
	\begin{longtable}{@{\extracolsep{\fill}}lcccccc@{}}
		\caption{Simulated bias and variance of the proposed estimators of $\dn$}\label{tab3}\\
		\toprule  
		$m=\lfloor 1+T^{0.65} \rfloor$ &	Method & & $T=200$ & $T=350$ & $T=500$ & $T=1000$ \\ \midrule 
		\endfirsthead
		\toprule  
		$m=\lfloor 1+T^{0.65} \rfloor$ &	Method & & $T=200$ & $T=350$ & $T=500$ & $T=1000$ \\ \midrule 
		\endhead
		\hline \multicolumn{7}{r}{{Continued on next page}} \\ 
		\endfoot 
		\endlastfoot			
		Mean Bias &	Proposed&	&  -0.0413& -0.0262& -0.0207& -0.0084 \\  
		&	LRS-type &&  -0.1111& -0.0684& -0.0507& -0.0240	
		\\ \midrule 
		Variance &	Proposed&	&0.0103& 0.0064& 0.0050 &0.0028
		\\  
		&	LRS-type && 0.0240 &0.0146& 0.0102& 0.0042	
		\\ \midrule 
		MSE &	Proposed&	& 0.0120& 0.0071& 0.0055& 0.0029	\\  
		&	LRS-type &&0.0364& 0.0192& 0.0128& 0.0048 \\ \bottomrule
	\end{longtable}
	\vspace{-.2in}
	\begin{justify}
		{\footnotesize {Notes: Based on 2,000 Monte Carlo replications. The proposed estimator is constructed from $v=v^{(\ell)}$ which maximizes $\hat{d}_{LW} (\langle {Z}^0_t,v^{(\ell)} \rangle)$ for $v^{(\ell)} = \sum_{j=1}^3 a_{j,\ell} p_j$, where $a_{j,\ell} \sim N(1,1)$ for $\ell=1,\ldots,20$, $p_j$ is the Legendre polynomial of order $j-1$, and $a_{j,\ell}$ is independent across $j$ and $\ell$. The LRS-type estimator is  $\hat{d}_{LW} (\langle {Z}_t^0,\hat{v}_1 \rangle)$, where $\hat{v}_1$ with the leading eigenvector of $T^{-1}\sum_{t=1}^T{Z}_t^0\otimes {Z}_t^0$.}}
	\end{justify}
\end{center}

\begin{center}
	\tabcolsep 0.09in
	\begin{longtable}{@{\extracolsep{\fill}}lcccccc@{}}
		\caption{Simulated bias and variance of the proposed estimators of $\ds$}\label{tab4}\\
		\toprule  
		$m=\lfloor 1+T^{0.65} \rfloor$ &	Method & & $T=200$ & $T=350$ & $T=500$ & $T=1000$ \\ \midrule 
		\endfirsthead
		\toprule  
		$m=\lfloor 1+T^{0.65} \rfloor$ &	Method & & $T=200$ & $T=350$ & $T=500$ & $T=1000$ \\ \midrule 
		\endhead
		\hline \multicolumn{7}{r}{{Continued on next page}} \\ 
		\endfoot 
		\endlastfoot
		Mean Bias &	Proposed&	& -0.0718& -0.0442& -0.0332& -0.0210
		\\  
		&	LRS-type &&  -0.1304& -0.0867& -0.0704& -0.0498 \\ \midrule 
		Variance &	Proposed&&	0.0128 &0.0083& 0.0061& 0.0035
		\\  
		&	LRS-type && 0.0154 &0.0115& 0.0086& 0.0046
		\\ \midrule 
		MSE &	Proposed&	&    0.0179& 0.0102& 0.0072& 0.0039
		\\  
		&	LRS-type &&  0.0325& 0.0190& 0.0135& 0.0071
		\\ \bottomrule
	\end{longtable}
	\vspace{-.2in}
	\begin{justify}
		{\footnotesize {Notes: Based on 2,000 Monte Carlo replications. The estimator is given by $\hat{d}_{LW} (\langle \bar{Z}_t,\tilde{v}\rangle)$, where $\tilde{v} = \hat{v}_{q_{\dnn}+1}$ (LRS-type) or $\tilde{v}$ is set to a linear combination of $\hat{v}_{q_{\dnn}+1}$ and $\hat{v}_{q_{\dnn}+2}$ (Proposed); more specifically, in order to consider such a linear combination, we first define $v^{(\ell)} = \hat{v}_{q_{\dnn}+1}+ a_{\ell}  \hat{v}_{q_{\dnn}+2}$ (where $a_{\ell} \sim N(0,1)$ for $\ell=1,\ldots,20$ and $a_{\ell}$ is independent of $\ell$) and then $\tilde{v}$ is set to $v^{(\ell)}$ maximizing $\hat{d}_{LW} (\langle \bar{Z}_t,v^{(\ell)}\rangle)$, which is as we do for our proposed estimator of $\dn$ in Table~\ref{tab3}.  $\hat{v}_{q_{\dnn}+1}$ and $\hat{v}_{q_{\dnn}+2}$ denote the eigenvectors of $T^{-1}\sum_{t=1}^T\bar{Z}_t\otimes \bar{Z}_t$ corresponding to the $(q_{\dnn}+1)$-th and $(q_{\dnn}+2)$-th largest eigenvalues, respectively.}}
	\end{justify}
\end{center}

\begin{center}
	\tabcolsep 0.09in
	\begin{longtable}{@{\extracolsep{\fill}}lcccccc@{}}
		\caption{Coverage performance of the pointwise confidence intervals of the memory parameter estimated by the local Whittle estimators with 95\% nominal level}\label{tab5} \\
		\toprule 
		& \multicolumn{6}{c}{Coverage probability differences}\\\midrule 
		$m$ &	Target & Method& $T=200$ & $T=350$ & $T=500$ & $T=1000$ \\ \midrule 
		\endfirsthead
		\toprule 
		& \multicolumn{6}{c}{Coverage probability differences}\\\midrule 
		$m$ &	Target & Method& $T=200$ & $T=350$ & $T=500$ & $T=1000$ \\ \midrule 
		\endhead
		\hline \multicolumn{7}{r}{{Continued on next page}} \\ 
		\endfoot 
		\endlastfoot
		$\lfloor 1+ T^{0.6} \rfloor$ &	$d_{\dnn}$&Proposed	&0.0885&0.0630& 0.0600 &0.0400
\\  
&&LRS-type	&0.3370&0.2660&0.2070&  0.1660			\\  
&	$d_{\dss}$ & Proposed & 0.2090& 0.1450& 0.1125& 0.0705
\\ 
&&LRS-type	& 0.3965& 0.2995	 	& 0.2695	 	 & 0.2265
\\  
\midrule 
$\lfloor 1+ T^{0.65} \rfloor$ & $d_{\dnn}$&Proposed	&0.0995& 0.0785&0.0635& 0.0470
\\  
&&LRS-type	&0.3180&  0.2575&   0.2160 & 0.1710
\\  
&	$d_{\dss}$ &Proposed&0.1600& 0.0990& 0.0975 & 0.0370
\\ 
&&LRS-type	&0.3470 & 0.2415 &0.2010& 0.1460	
\\  
\midrule 
$\lfloor 1+ T^{0.7} \rfloor$ &	$d_{\dnn}$&Proposed	&0.1175& 0.1085& 0.0935 & 0.0980
\\  
&&LRS-type	&   0.3245&  0.2780		&0.2500&  0.2080
\\  
&$d_{\dss}$ &Proposed& 0.1410& 0.0995& 0.0645& 0.0305
\\
&&LRS-type	& 0.3200& 0.2060& 0.1710&  0.1190
\\ \midrule 
& \multicolumn{6}{c}{Interval scores} \\\midrule
$m$ &	Target & Method& $T=200$ & $T=350$ & $T=500$ & $T=1000$ \\ \midrule 
$\lfloor 1+ T^{0.6} \rfloor$ &	$d_{\dnn}$&Proposed	&  0.7854&0.5861&0.4967  &0.3677
\\  
&&LRS-type	&2.6169& 1.8270& 1.3248&0.6999	\\  
&	$d_{\dss}$ & Proposed & 1.0122&0.7729&0.6264 &0.4283 
\\ 
&&LRS-type	&1.7899&  1.4861& 1.2139  &0.8609\\  
\midrule 
$\lfloor 1+ T^{0.65} \rfloor$ & $d_{\dnn}$&Proposed	& 0.7012& 0.5169& 0.4454&   0.3322
\\  
&&LRS-type	&  2.6460&   1.8466& 1.3519 &0.7165
\\  
&	$d_{\dss}$ &Proposed& 0.8450&  0.5899&  0.4663 &  0.3201 
\\ 
&&LRS-type	&  1.6781 &   1.1657&   0.9037 & 0.5591	
\\  
\midrule 
$\lfloor 1+ T^{0.7} \rfloor$ &	$d_{\dnn}$&Proposed	&  0.6848&  0.5266 & 0.4305 &   0.3453 	\\  
&&LRS-type	& 2.7783& 2.0358&  1.5571&  0.9048
\\  
&$d_{\dss}$ &Proposed& 0.7606&  0.5007&   0.3736&    0.2571 
\\
&&LRS-type	& 1.5874&0.9978& 0.7199 &0.4395 \\\bottomrule
	\end{longtable}
	\vspace{-.2in}
	\begin{justify}
		{\footnotesize {Notes: Based on 2,000 Monte Carlo replications. The estimators are computed as in Tables \ref{tab3} and \ref{tab4}. The reported number in each case is computed as the absolute value of the difference between the computed coverage rate and the nominal level 0.95. The interval score in each case is computed as in \citet[][Section 6.2]{gneiting2007} with the quantiles 0.025 and 0.975. An estimator with smaller interval scores is regarded as better.}} 
\end{justify}
\end{center}

\begin{figure}[!htb]
\centering \caption{Histograms of estimates of $\dn$} \label{fig3} 
\begin{subfigure}{.32\linewidth}
	\includegraphics[width = \linewidth, trim={0 0 0 5em},clip]{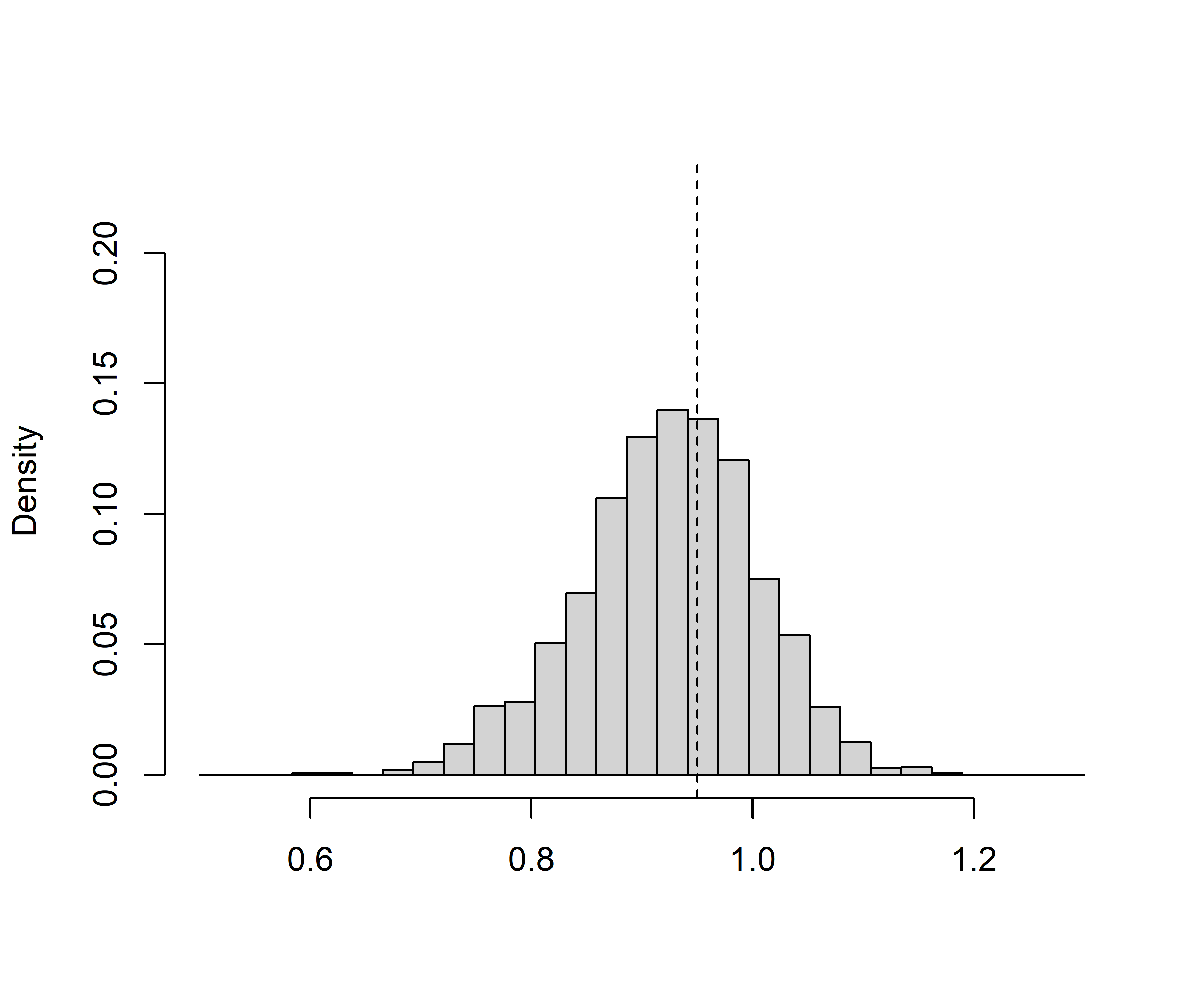}
	\subcaption{Proposed, $T=350$}
\end{subfigure}
\begin{subfigure}{.32\linewidth}
	\includegraphics[width = \linewidth, trim={0 0 0 5em},clip]{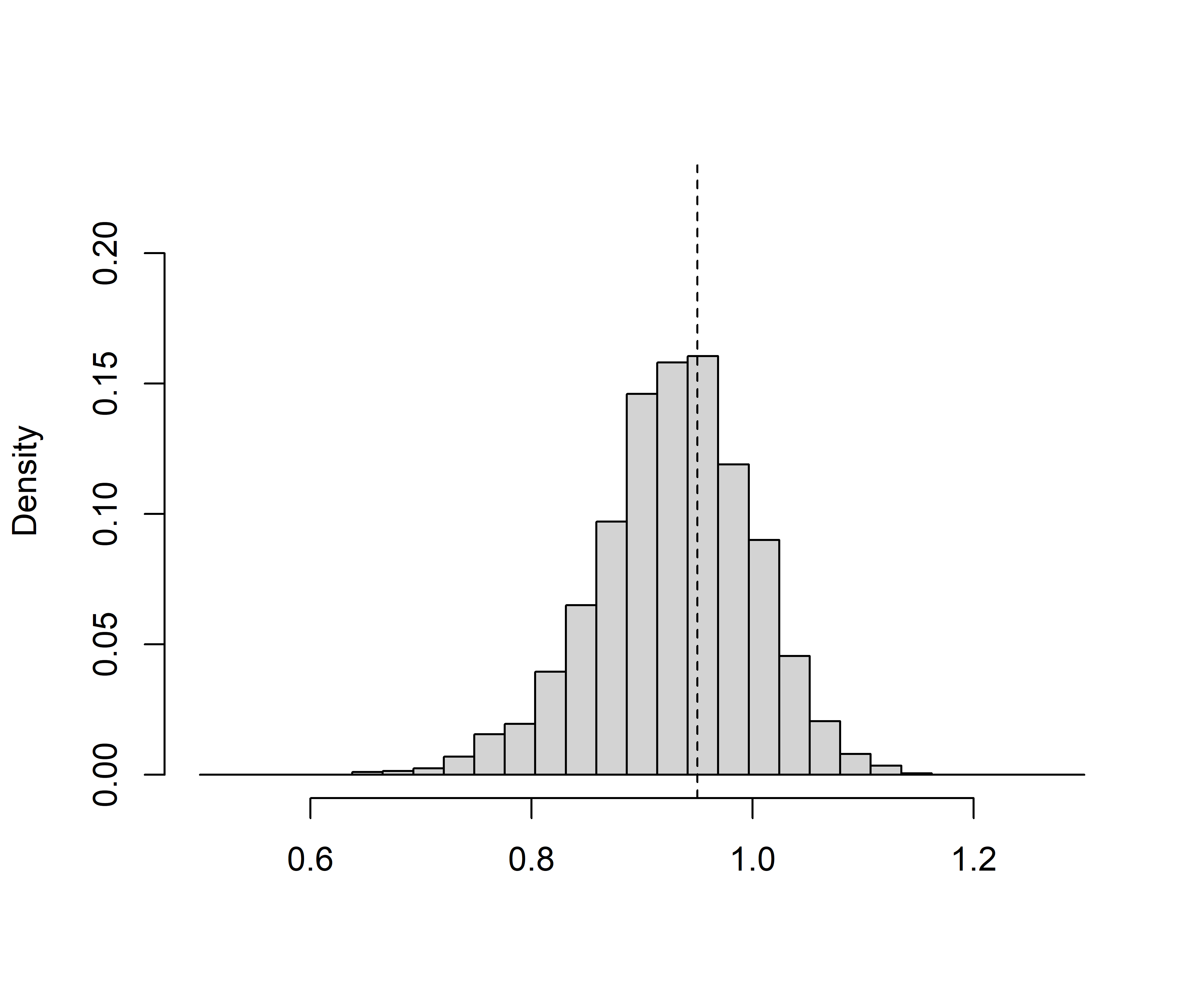}
	\subcaption{Proposed, $T=500$}
\end{subfigure}
\begin{subfigure}{.32\linewidth}
	\includegraphics[width = \linewidth, trim={0 0 0 5em},clip]{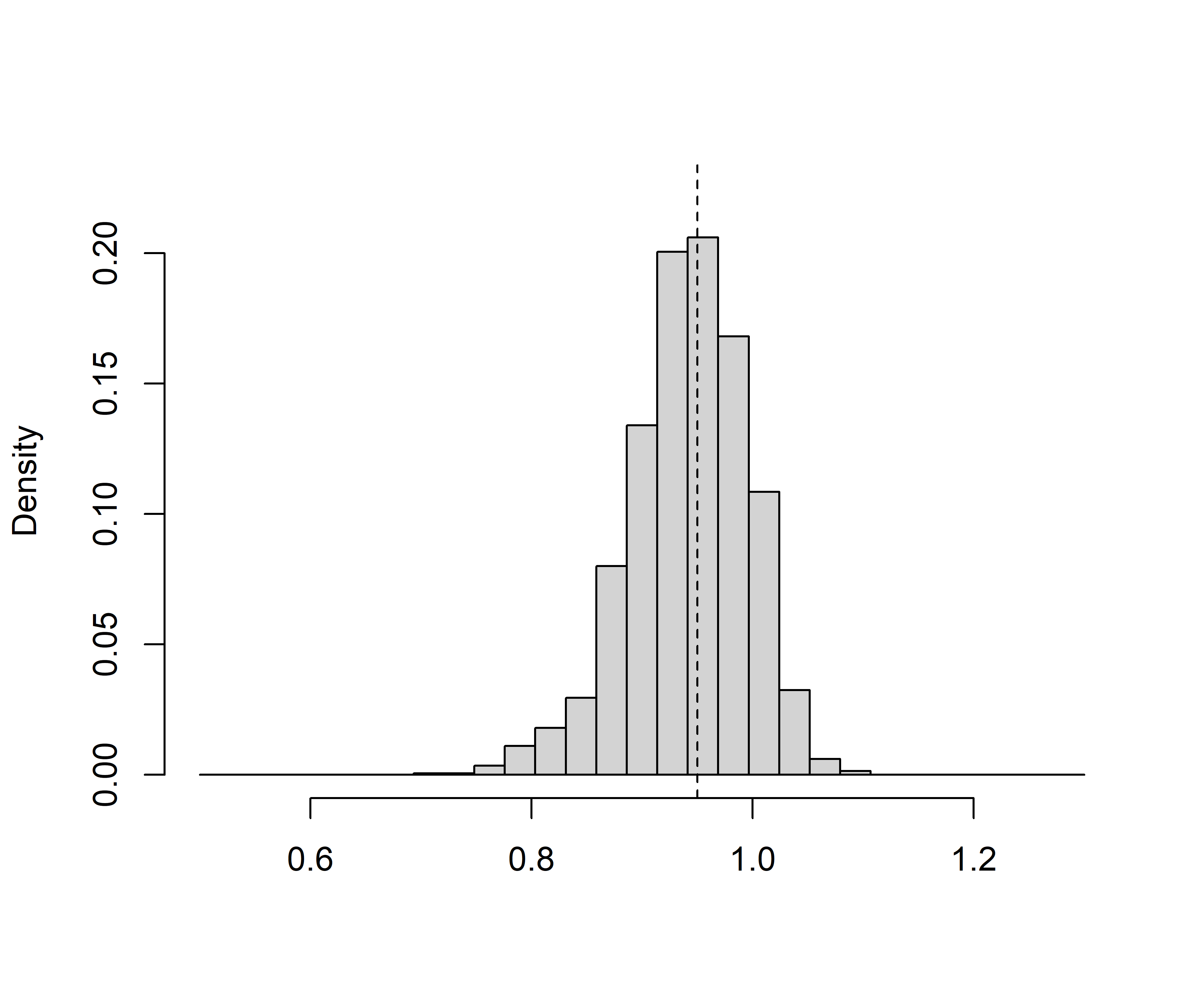}
	\subcaption{Proposed, $T=1000$}
\end{subfigure} \\
\begin{subfigure}{.32\linewidth}
	\includegraphics[width = \linewidth, trim={0 0 0 5em},clip]{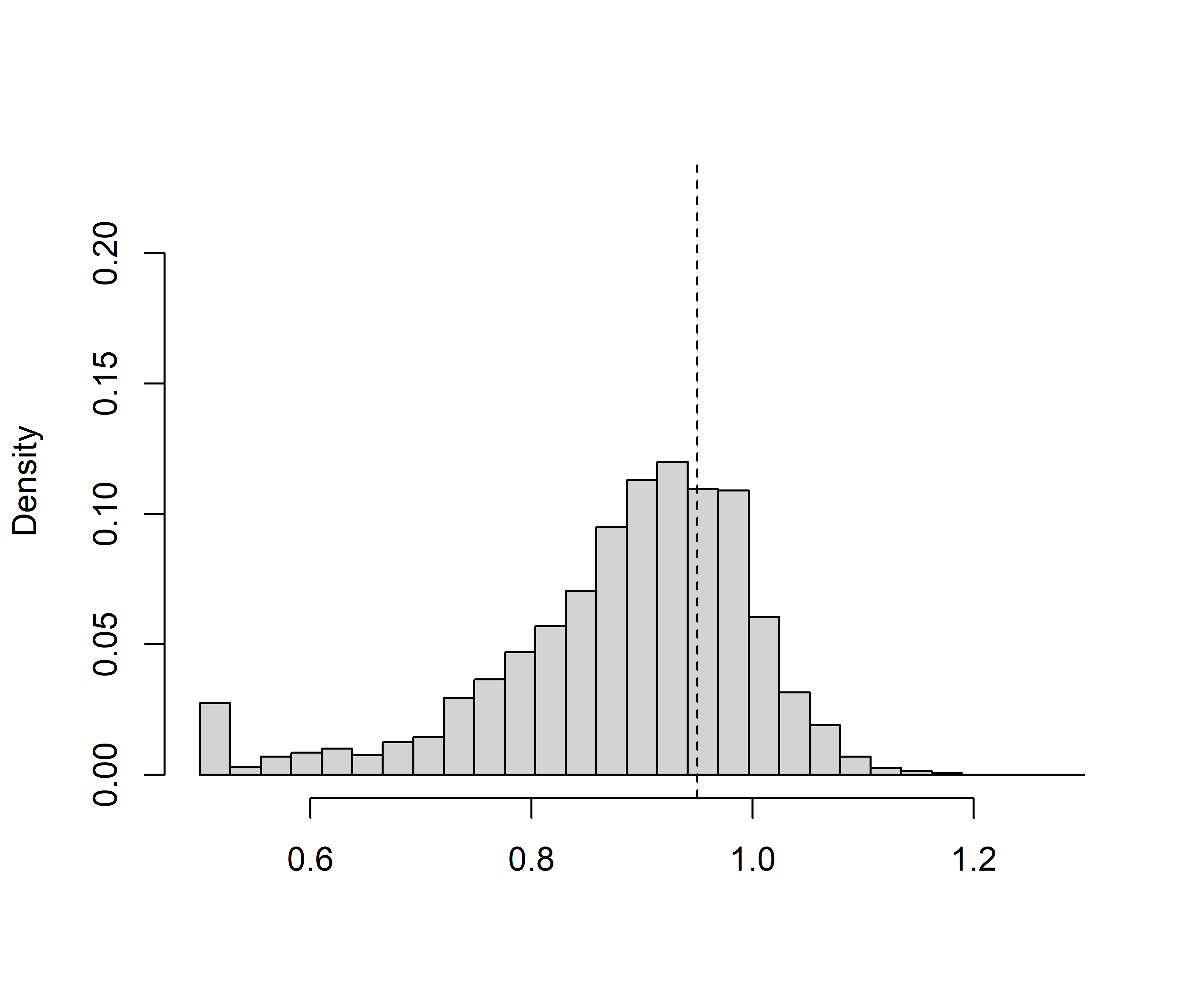}
	\subcaption{LRS-type, $T=350$}
\end{subfigure}
\begin{subfigure}{.32\linewidth}
	\includegraphics[width = \linewidth, trim={0 0 0 5em},clip]{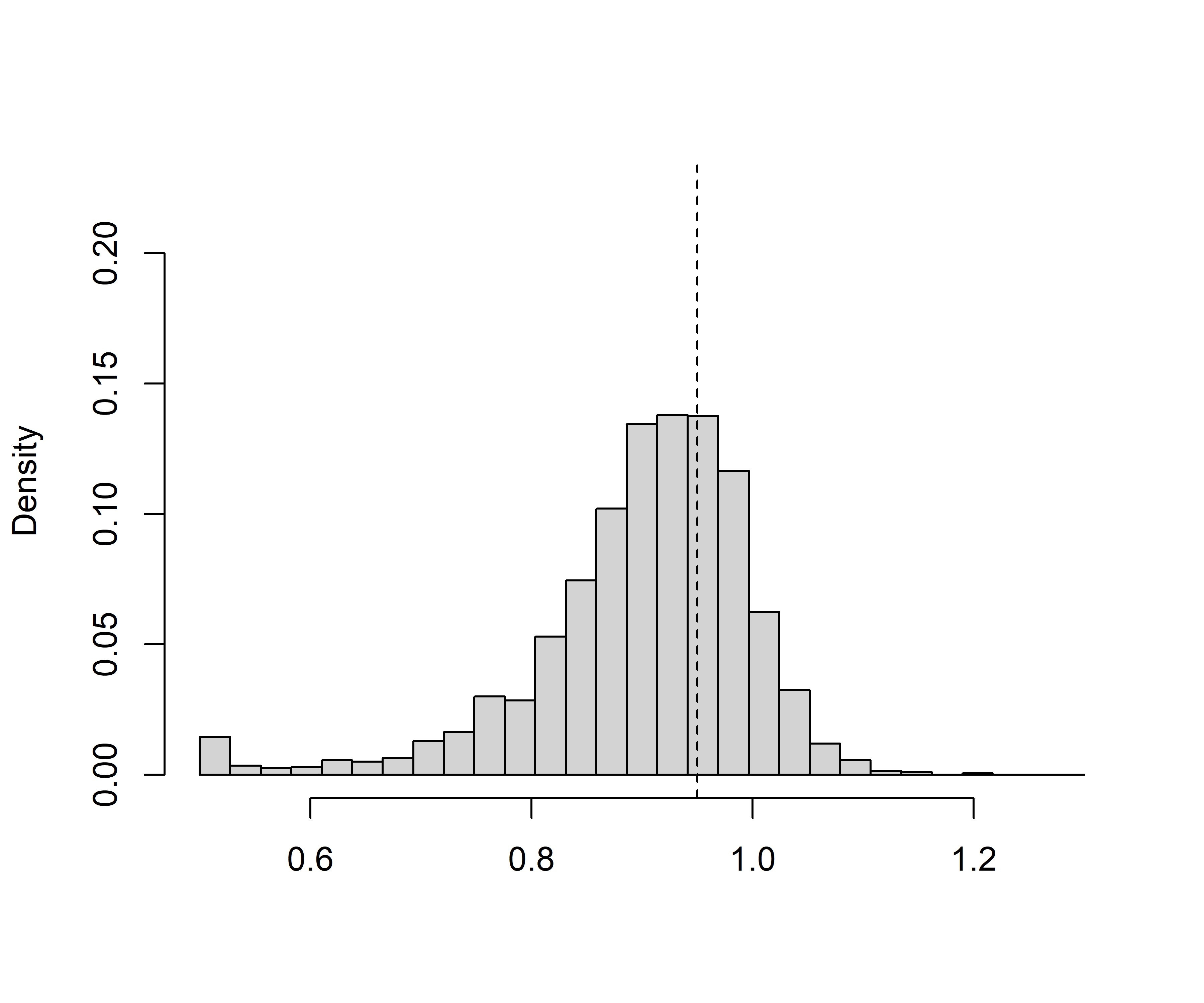}
	\subcaption{LRS-type, $T=500$}
\end{subfigure}
\begin{subfigure}{.32\linewidth}
	\includegraphics[width = \linewidth, trim={0 0 0 5em},clip]{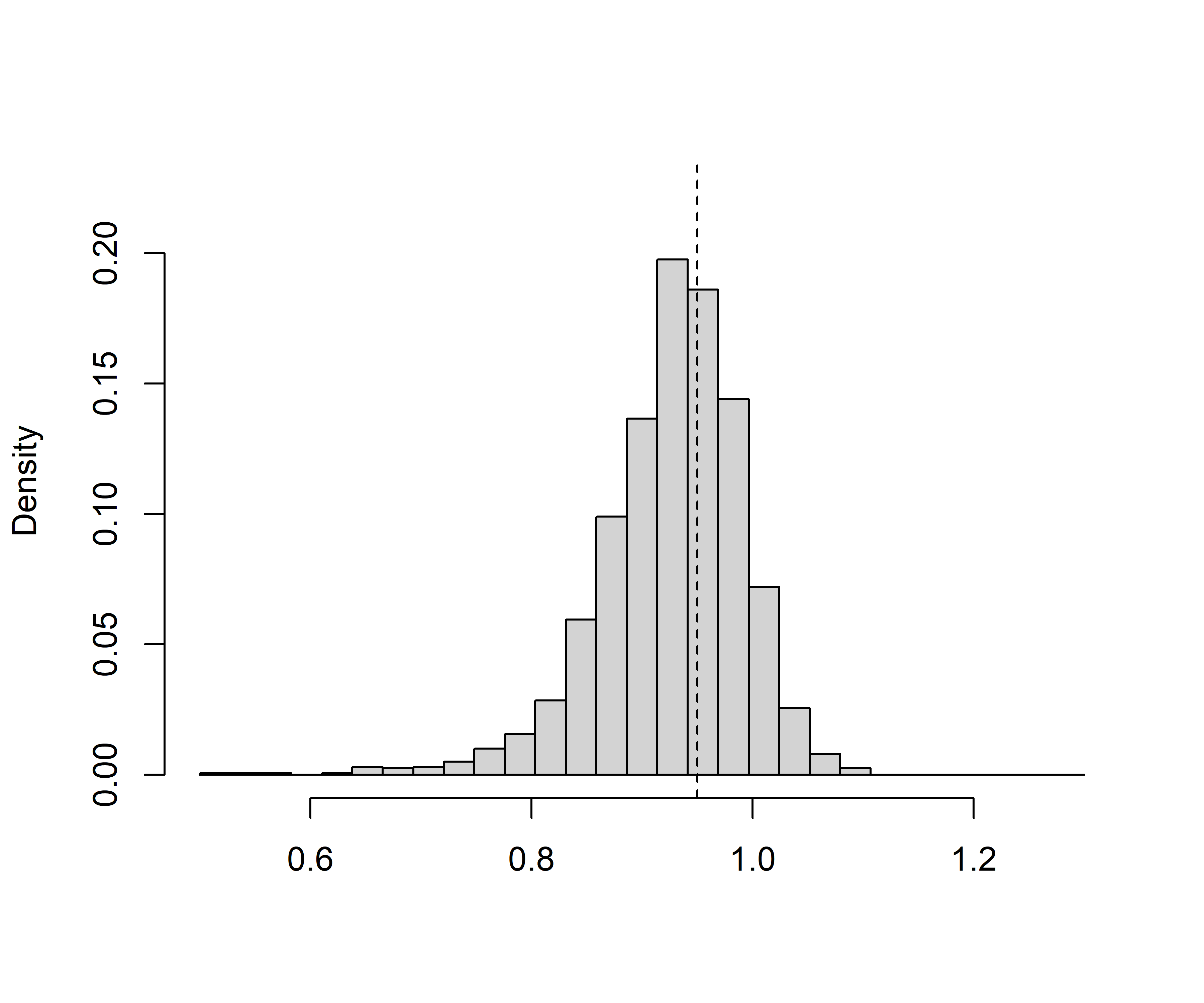}
	\subcaption{LRS-type, $T=1000$}
\end{subfigure} 
\begin{justify}
	{\footnotesize{Notes: The histograms are computed from the Monte Carlo replications used in Table~\ref{tab3}.}}
\end{justify}
\end{figure}

\begin{figure}[!htb]
\centering  \caption{Histograms of estimates of $\ds$} \label{fig4}
\begin{subfigure}{.32\linewidth}
	\includegraphics[width = \linewidth, trim={0 0 0 5em},clip]{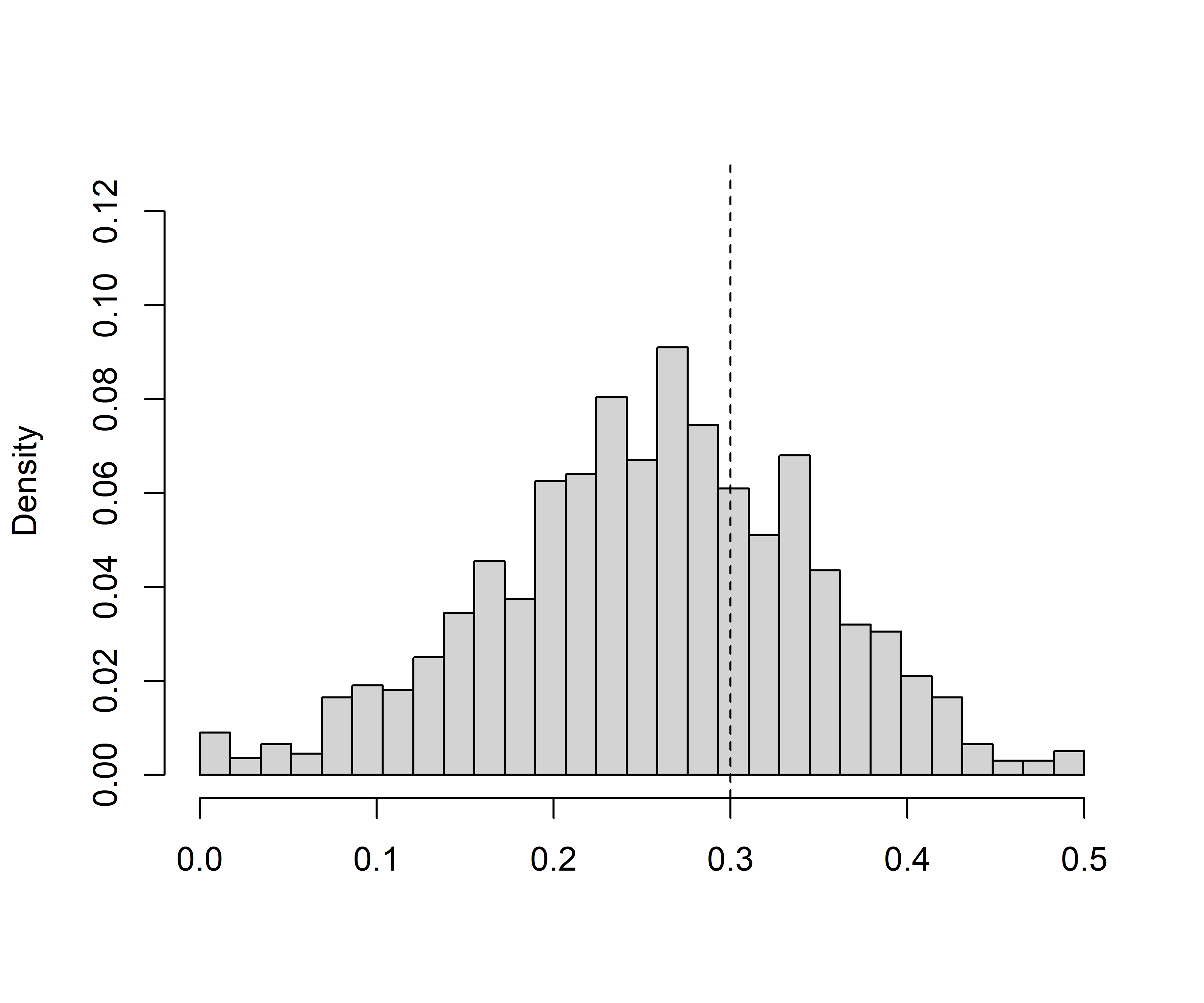}
	\subcaption{Proposed, $T=350$}
\end{subfigure}
\begin{subfigure}{.32\linewidth}
	\includegraphics[width = \linewidth, trim={0 0 0 5em},clip]{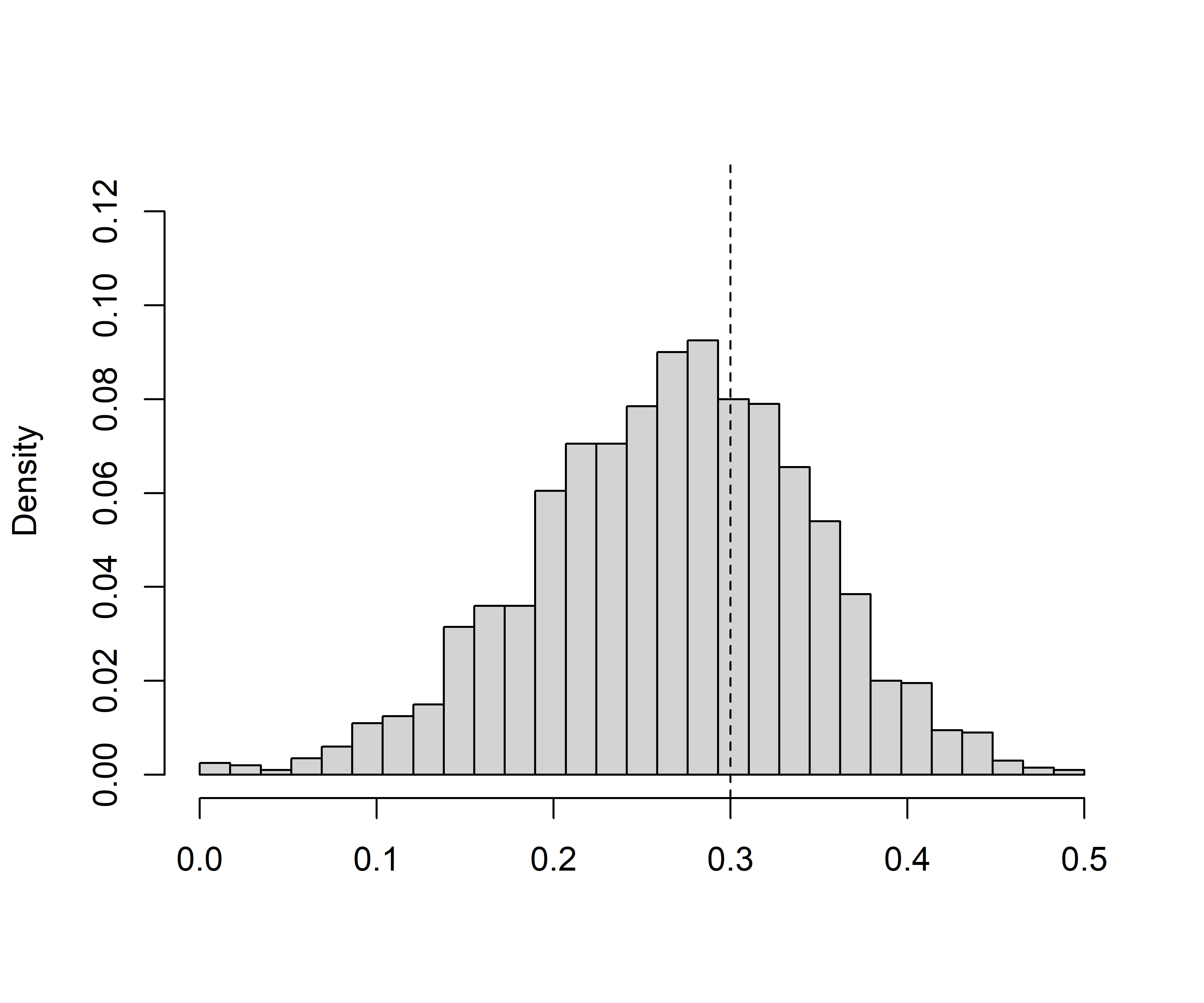}
	\subcaption{Proposed, $T=500$}
\end{subfigure}
\begin{subfigure}{.32\linewidth}
	\includegraphics[width = \linewidth, trim={0 0 0 5em},clip]{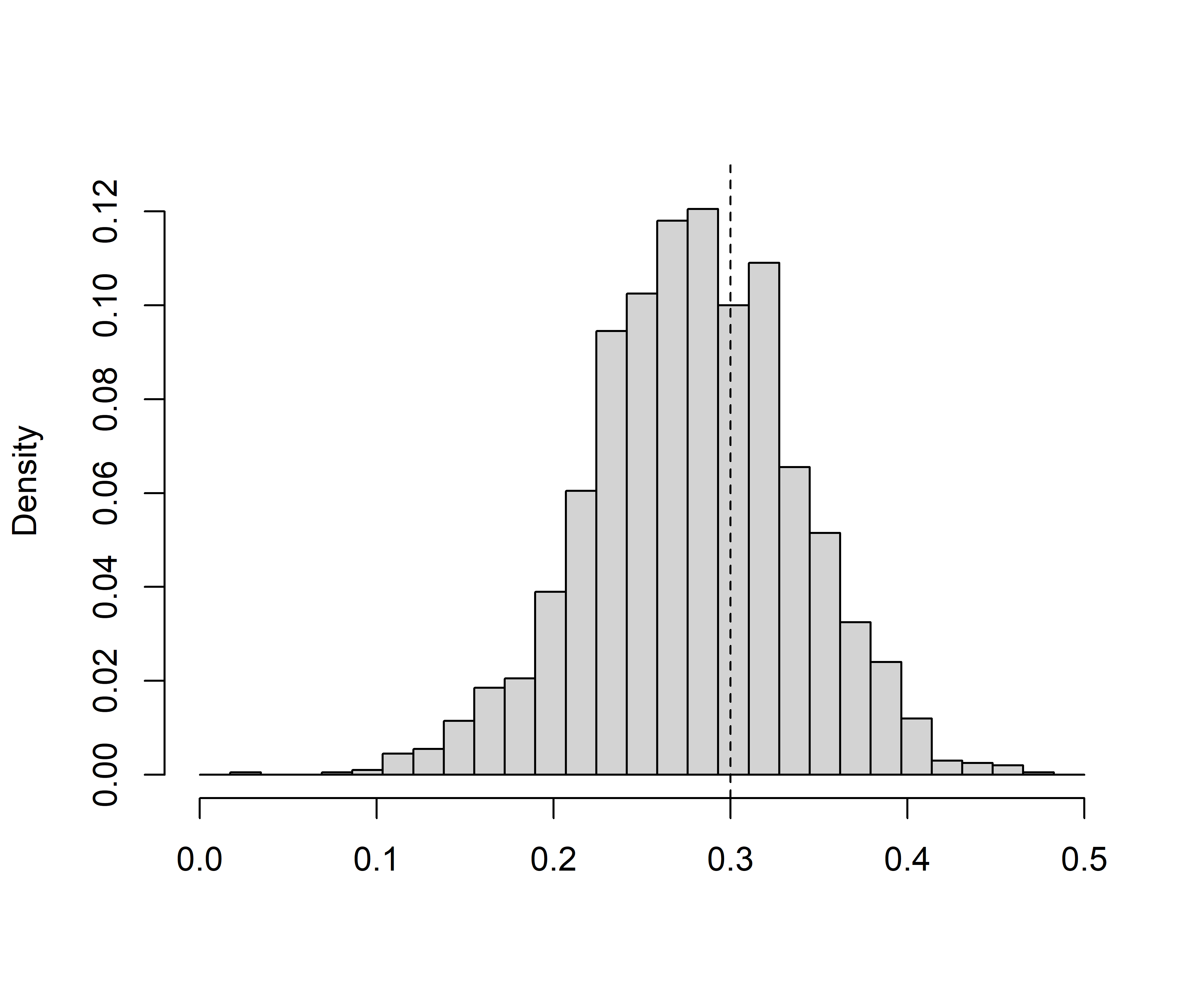}
	\subcaption{Proposed, $T=1000$}
\end{subfigure} \\
\begin{subfigure}{.32\linewidth}
	\includegraphics[width = \linewidth, trim={0 0 0 5em},clip]{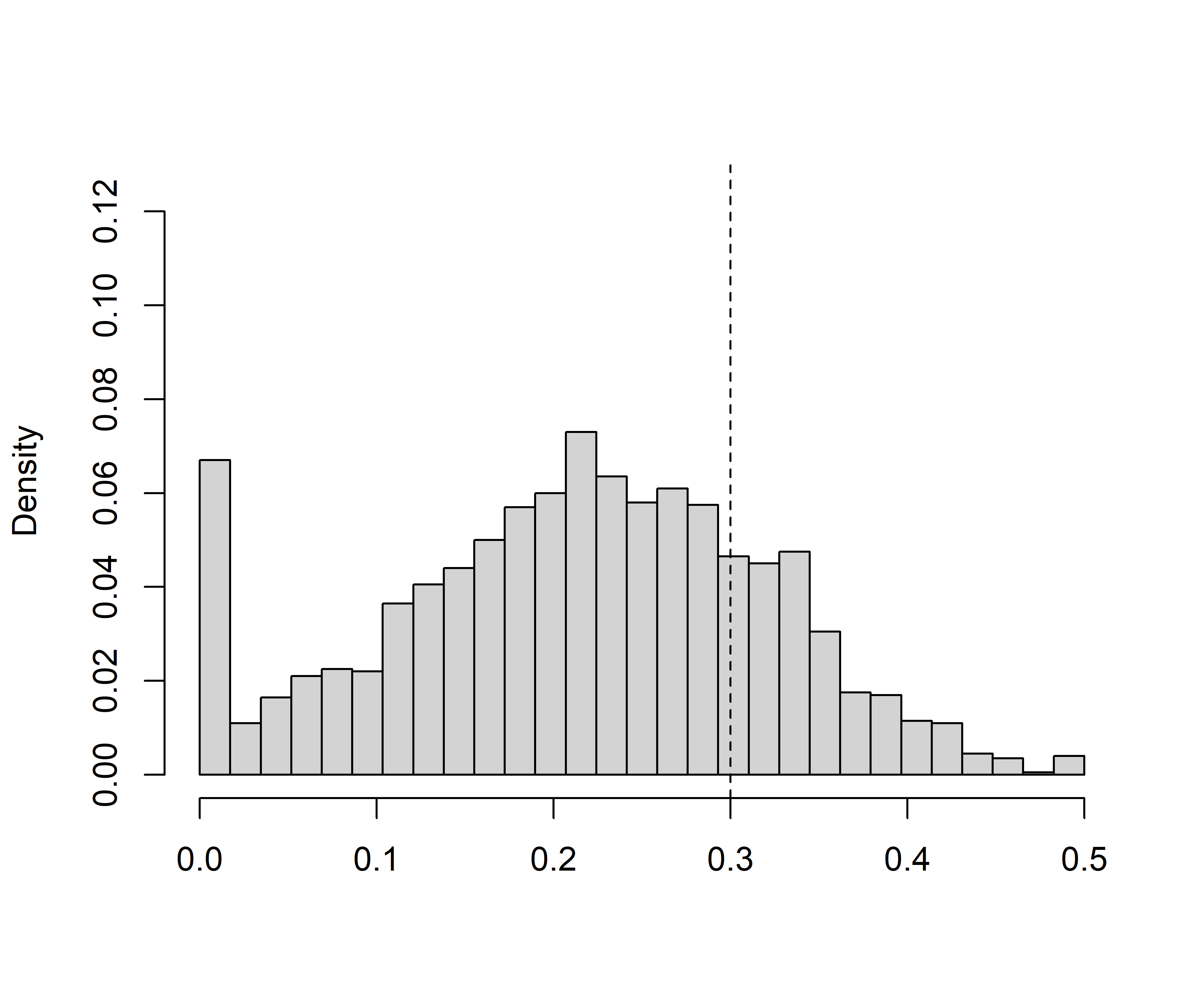}
	\subcaption{LRS, $T=350$}
\end{subfigure}
\begin{subfigure}{.32\linewidth}
	\includegraphics[width = \linewidth, trim={0 0 0 5em},clip]{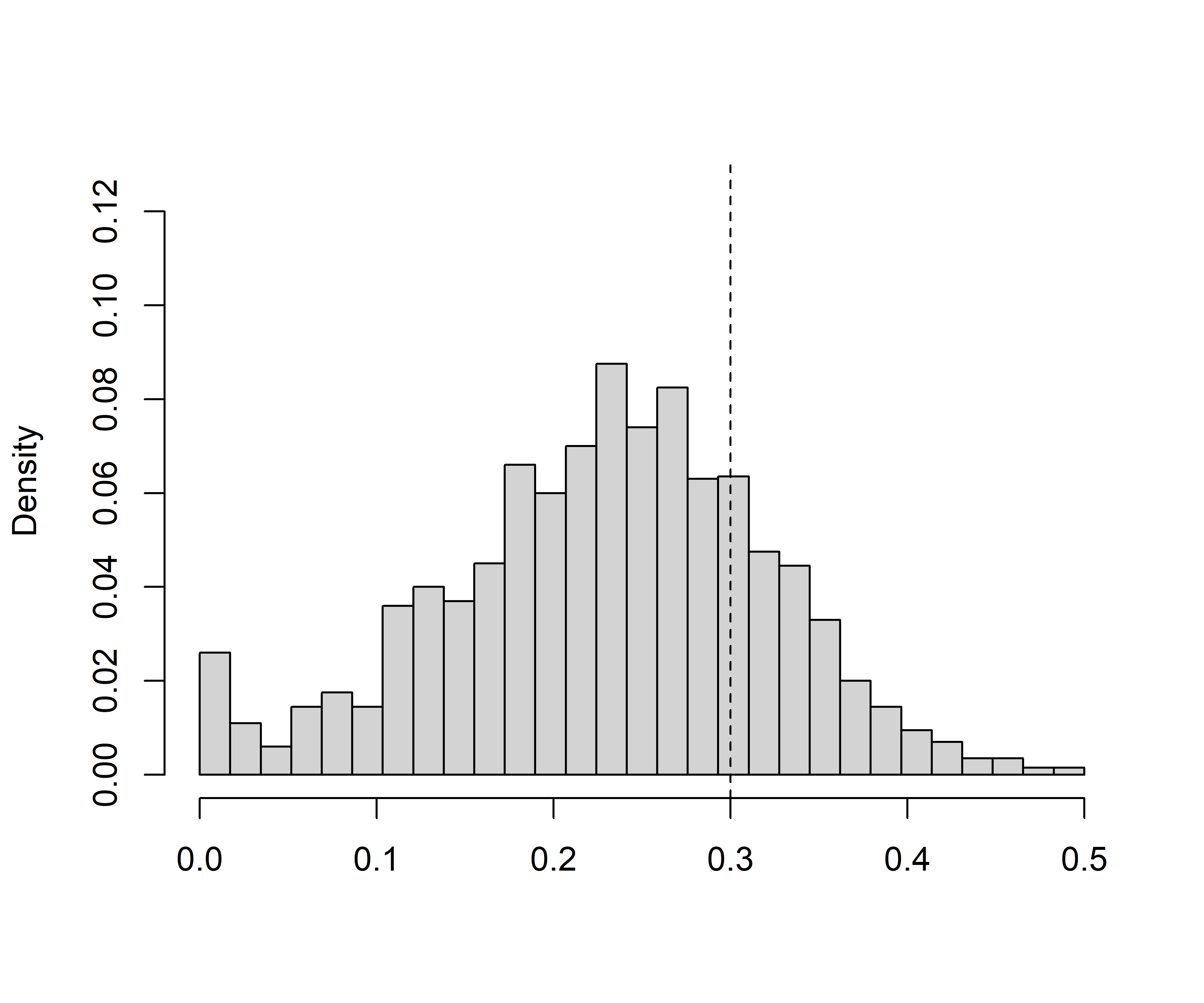}
	\subcaption{LRS, $T=500$}
\end{subfigure}
\begin{subfigure}{.32\linewidth}
	\includegraphics[width = \linewidth, trim={0 0 0 5em},clip]{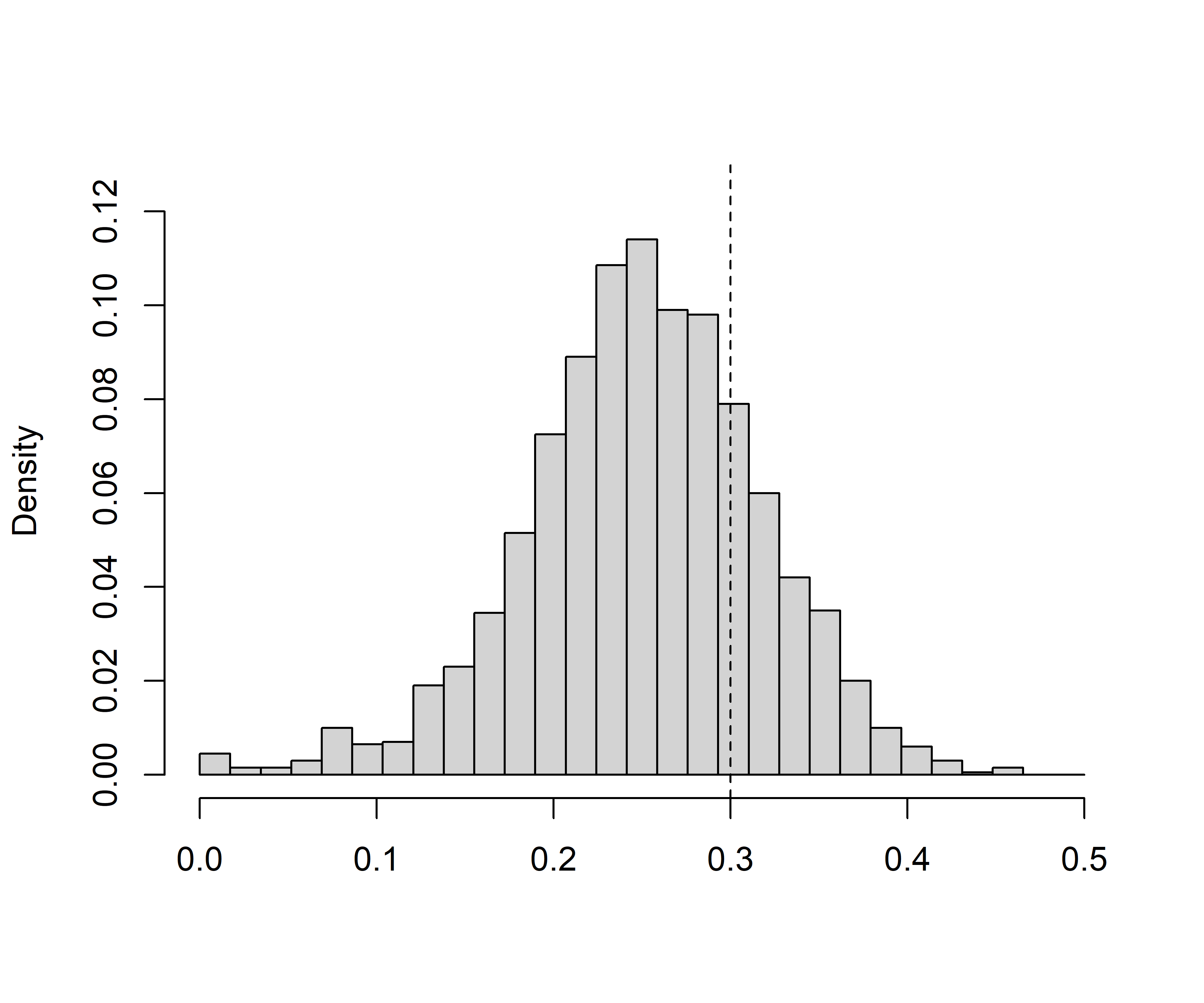}
	\subcaption{LRS, $T=1000$}
\end{subfigure}
\begin{justify}
	{\footnotesize{Notes: The histograms are computed from the Monte Carlo replications used in Table~\ref{tab4}.}}
\end{justify}
\end{figure}

\subsection{Empirical application -- Swedish age-specific mortality rates}\label{sec:mortality}

We apply our methodology to age- and gender-specific mortality data for Sweden observed from 1751 to 2021 \phantomsection\label{rvv2sample1}{($T=271$)}; the data used in this section is available from the Human Mortality Database at \url{https://www.mortality.org/}, and we specifically use the central mortality rates, which are observed at various ages from $0$ to $110$ (and older) for each gender over time. 
Viewing the mortality rates at various ages as functional observations as in, for example, \cite{Hyndman2007}, \cite{shang2016}, and \cite{shang2017grouped}, we may apply our inferential methods to the considered data. As in the aforementioned literature, we hereafter consider the natural logarithms of the observed mortality rates for each gender, which are visualized in Figure~\ref{fig1}.
\begin{figure}[!htb]
\centering
\begin{subfigure}{.495\linewidth}
	\includegraphics[width = \linewidth, trim={0 0 0 5em},clip]{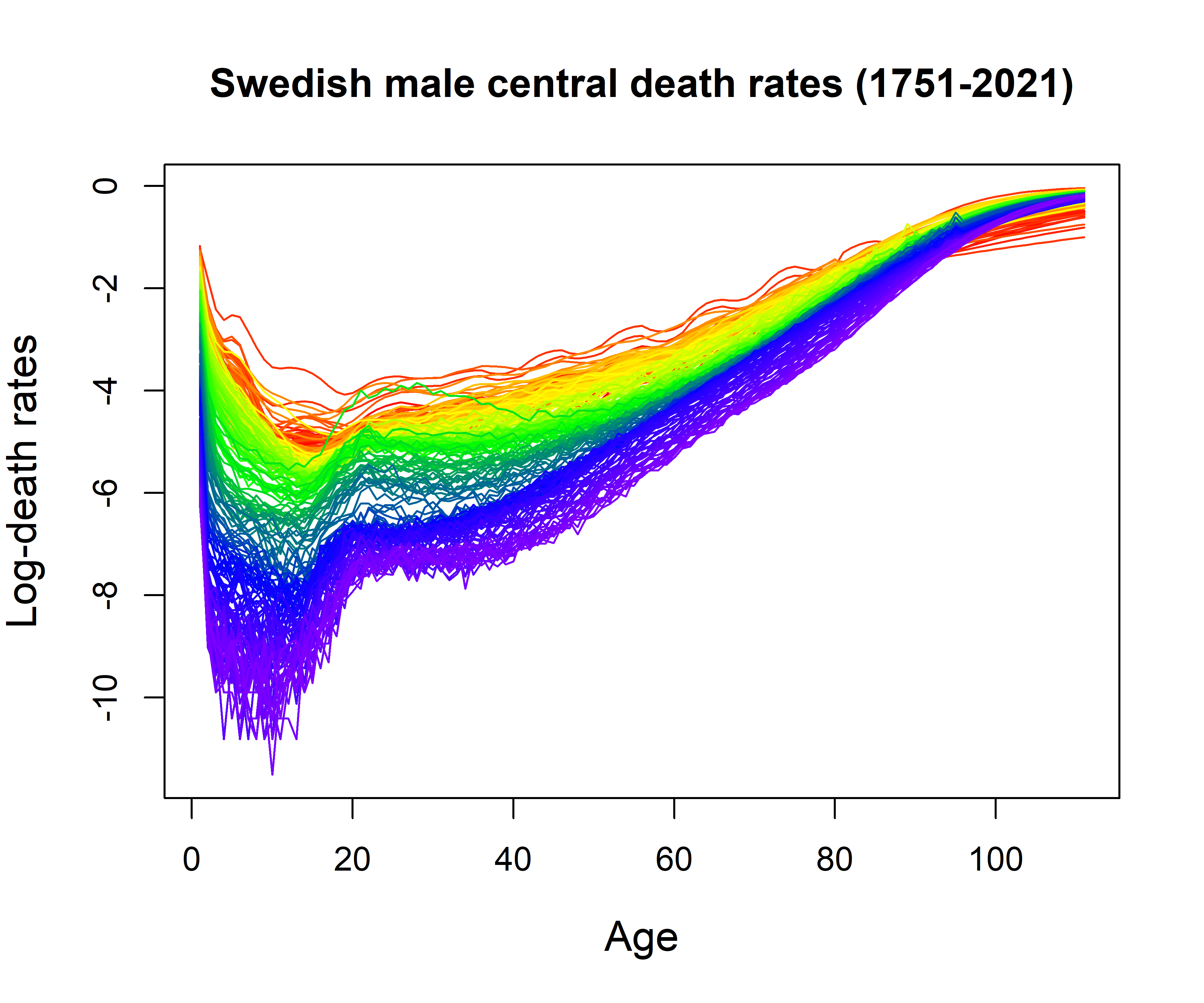}
	\subcaption{Male data}
\end{subfigure}
\begin{subfigure}{.495\linewidth}
	\includegraphics[width = \linewidth, trim={0 0 0 5em},clip]{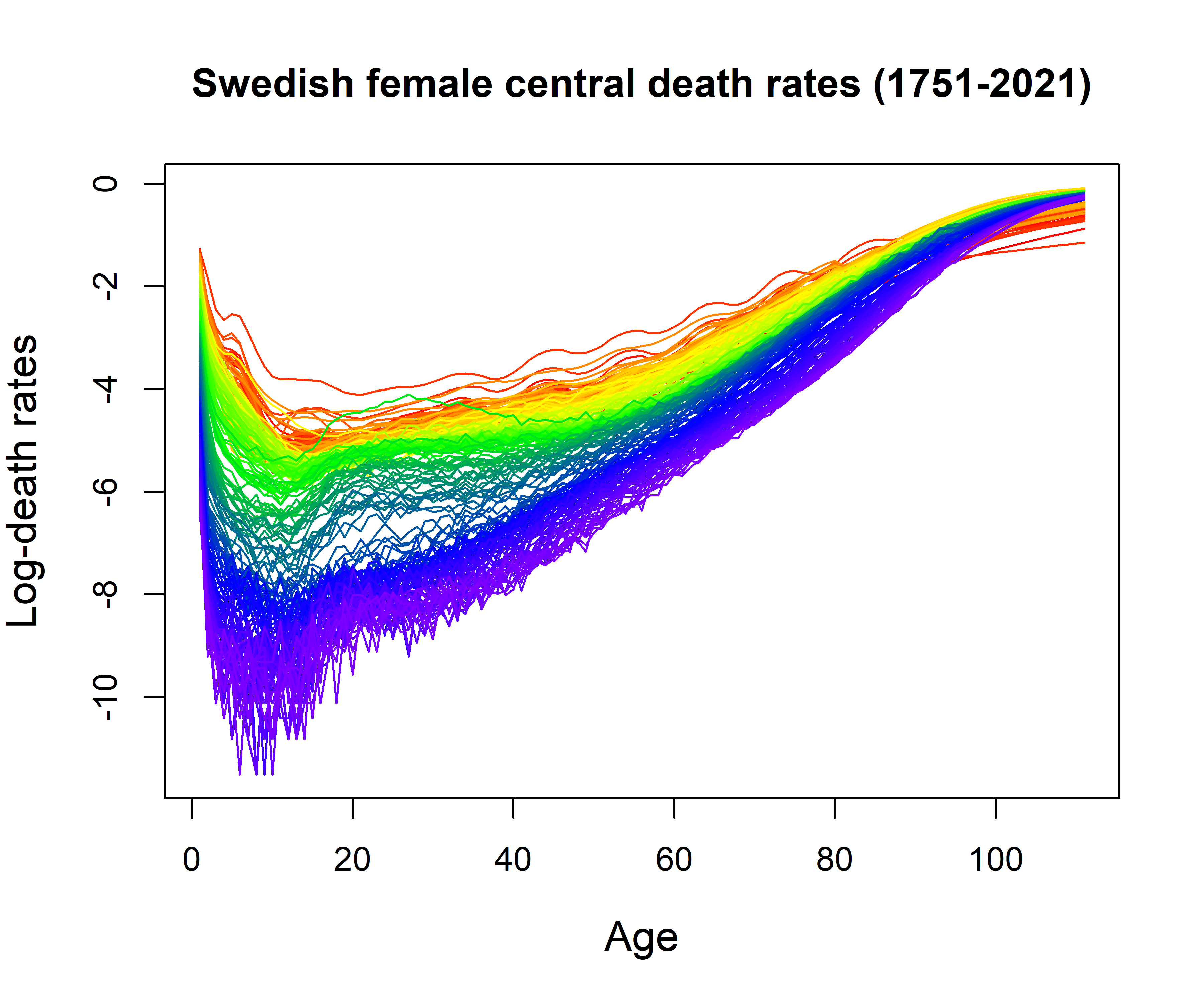}
	\subcaption{Female data}
\end{subfigure}
\caption{Log-mortality rates at various ages} \label{fig1}
\begin{justify}
	{\footnotesize{Notes: The data for a specific year and gender is given by a 111-dimensional vector of mortality rates from age 0 to 110 (and older), and each of such vectors is plotted as a function of age. Zero mortality rates are reported once (resp.\ six times) in the male (resp.\ female) data over the entire time span and all ages, which are replaced by $10^{-5}$ in order to have bounded log-mortality rates.}}
\end{justify}
\end{figure}

For our statistical analysis, we first represent the observed mortality rates at various ages for each gender with~40 Legendre polynomial basis functions; \phantomsection\label{rvlabel3}\commRV{we conducted a sensitivity analysis by varying the number of basis functions from~35 to~55 with an increment of~5, and found that the estimates obtained from our proposed methods are not sensitive to changes in the number of basis functions within this range (only the estimate of $q_{\dss}$ for the male data slightly changes from 3 to 5 as the number of basis functions changes).} We first estimate the memory parameter $\dn$ of the time series for each gender. The top rows of Table~\ref{tabemp1} report the local Whittle estimation results. As is not uncommon in many empirical applications, the memory of each time series is far greater than $1/2$ and quite close to unity. This not only implies that both time series of mortality rates are nonstationary but also justifies, to some degree, the conventional use of the random walk model for mortality in the literature. We then apply our variance-ratio testing procedure to estimate the dimension $q_{\dnn}$ of the dominant subspace for each time series. Of course, to implement the proposed testing procedure, the asymptotic null distribution of the test statistic, which depends on $\dn$, needs to be approximated by a feasible estimate of $\dn$ (see Remark~\ref{remdalpha}). This is done by replacing $\dn$ with the relevant estimate obtained by our proposed local Whittle method (see Table~\ref{tabemp1}). 

\begin{table}[!htb]
\caption{Local Whittle estimation-Swedish mortality data.} \label{tabemp1}
\begin{tabular*}{1\linewidth}{@{\extracolsep{\fill}}lccc@{}}
	\toprule
	Target	& Method  &  Male &  Female  \\ \midrule	
	$\dn$ 	&  Proposed  & 0.962 & 0.989  \\ 
	&  LRS-type  & 0.956& 0.978   \\ \midrule	
	$\ds$ 	&  Proposed  & 0.424 & 0.433  \\ 
	&  LRS-type  & 0.402& 0.275  \\ \bottomrule	
\end{tabular*} 
\begin{justify}
{\footnotesize{Notes: The proposed and LRS-type estimators of $\dn$ are given as in Table~\ref{tab3}, and the bandwidth $m$ is set to $\lfloor 1+T^{0.65}\rfloor$. The estimators of $\ds$ are given as in  Table~\ref{tab4} with $m=\lfloor 1+T^{0.65}\rfloor$ but $\hat{v}_{q_{\dnn+1}}$ and $\hat{v}_{q_{\dnn+2}}$ are replaced by $\hat{v}_{\hat{q}_{\dnn+1}}$ and $\hat{v}_{\hat{q}_{\dnn+2}}$, where $\hat{q}_{\dnn+1}$ is the estimator obtained by our variance-ratio testing procedure.}}
\end{justify}
\end{table}

\begin{table}[!htb]
\caption{Dimension estimation-Swedish mortality data.} \label{tabemp2}
\begin{tabular*}{1\linewidth}{@{\extracolsep{\fill}}lccc@{}}
	\toprule
	Target	& Method  &  Male &  Female  \\ \midrule	
	$q_{\dnn}$ 	&  Proposed  &5 & 5 \\ 
	&  LRS-type  & 1& 1   \\ \midrule	
	$q_{\dss}$ 	&  Proposed  & 5 & 1  \\ \bottomrule	
\end{tabular*} 
\begin{justify}
{\footnotesize{Notes: The proposed estimator of $q_{\dnn}$ is obtained by our variance-ratio testing procedure as in Table~\ref{tab1}}, and $K=q + 2$, for each $H_0:q_{\dnn}=  q$, $\eta = 0.05$, and $q_{\max}$ is set to $6$. The LRS-type estimator of $q_{\dnn}$  is the same as that in Table~\ref{tab1}, and the tuning parameter $K$ is set to $6$. The proposed estimator of $q_{\dss}$ is given as in Table~\ref{tab2}, and \commRV{$h$ is set to $\lfloor 1+ T^{0.4} \rfloor$.}}
\end{justify}
\end{table}

The testing results are reported in the top rows of Table~\ref{tabemp2}, and for comparison, we also report the eigenvalue-ratio estimates ($\hat{q}_{\dnn}$), which are considered in Section~\ref{sec_fracdecom}. The estimated dimension of the dominant subspace by our proposed testing procedure is 5 for each case, but the eigenvalue-ratio estimate is given by 1 for each case. As may be deduced from the simulation studies in \cite{Li2020} considering a similar eigenvalue-ratio estimator (see Section~5 of their paper), this estimator tends to underestimate $q_{\dnn}$ in small samples, and our unreported simulation results also support this; for example, in  Table~\ref{tab1} based on our simulation setting with $q_{\dnn}=3$, the relative frequency of underestimation is {0.651} when $T=200$. Given this evidence and our earlier observation that the proposed variance-ratio testing procedure performs better in our simulation studies, we are inclined to conclude that $q_{\dnn} = 5$. Then, the dominant subspace may be estimated by the span of the eigenvectors corresponding to the first five largest eigenvalues of the sample covariance operator, as discussed in the previous sections.  

Assuming that the additional conditions given in Assumption~\ref{assum2add} hold, we may also estimate $\ds$ and $q_{\dss}$ using the proposed methods, which are, respectively, reported in the bottom rows of Tables~\ref{tabemp1} and~\ref{tabemp2}. Of course, these results might not be meaningful if Assumption~\ref{assum2add} is not satisfied, and, moreover, it may be hard to check if this assumption holds in practice. On top of all these estimation results, we report the time series of $\langle Z_t^0, \hat{v}_{j} \rangle$ for a few selected values of $j$   in Figure~\ref{fig2}, where $\hat{v}_j$ is the eigenvector corresponding to the $j$-th largest eigenvalue of $T^{-1} \sum_{t=1}^T Z^0_T \otimes Z^0_T$; specifically, $j$ is chosen so that each time series has a different integration order based on our estimation results given in Table~\ref{tabemp2} (see Section~3 of \cite{Li2020}). From the previous estimation results, we expect that, in Figure~\ref{fig2}, the persistence of the time series tends to be higher in the left panel ((a) and (d))  and lower in the right panel ((c) and (f)). It is quite clear that the time series reported in the left panel tend to be more persistent than those in the other panels, but it is less clear if the time series in the middle panel are more persistent than those in the right panel. This may be due to violation of the additional assumptions given in Assumption~\ref{assum2add} (i.e., the SRD component may not be clearly distinguishable by a single projection operator $Q$ as assumed in  Assumption~\ref{assum2add})  or insufficient sample size that does not guarantee good performance of our proposed statistical methods for $\ds$ and $q_{\dss}$ (see Section~\ref{sec_sim_result}). 

\begin{figure}[!htb]
\centering
\begin{subfigure}{.32\linewidth}
	\includegraphics[width = \linewidth, trim={0 0 0 4em},clip]{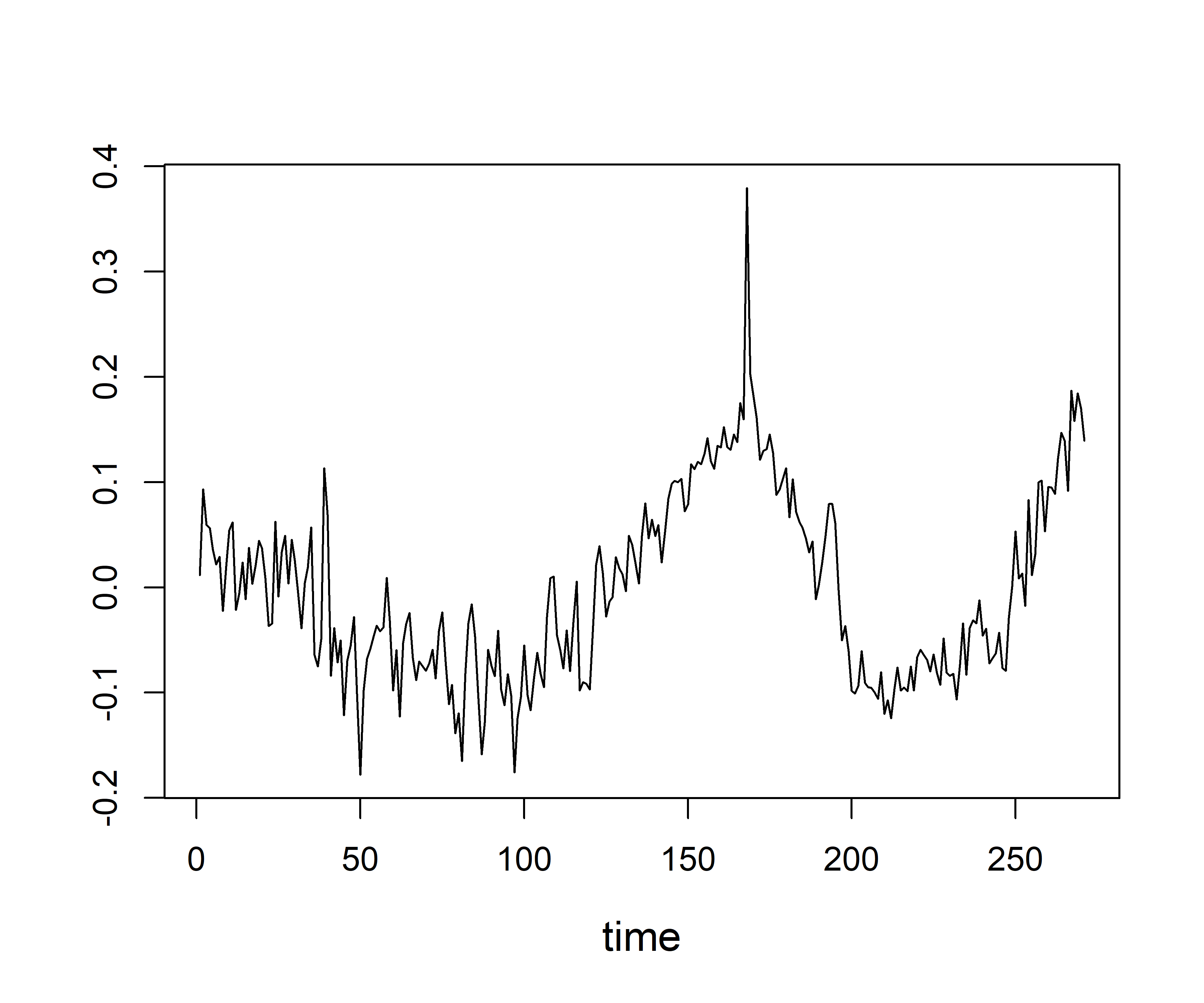}
	\subcaption{$\langle Z_t^0, \hat{v}_{3} \rangle$-male data}
\end{subfigure}
\begin{subfigure}{.32\linewidth}
	\includegraphics[width = \linewidth, trim={0 0 0 4em},clip]{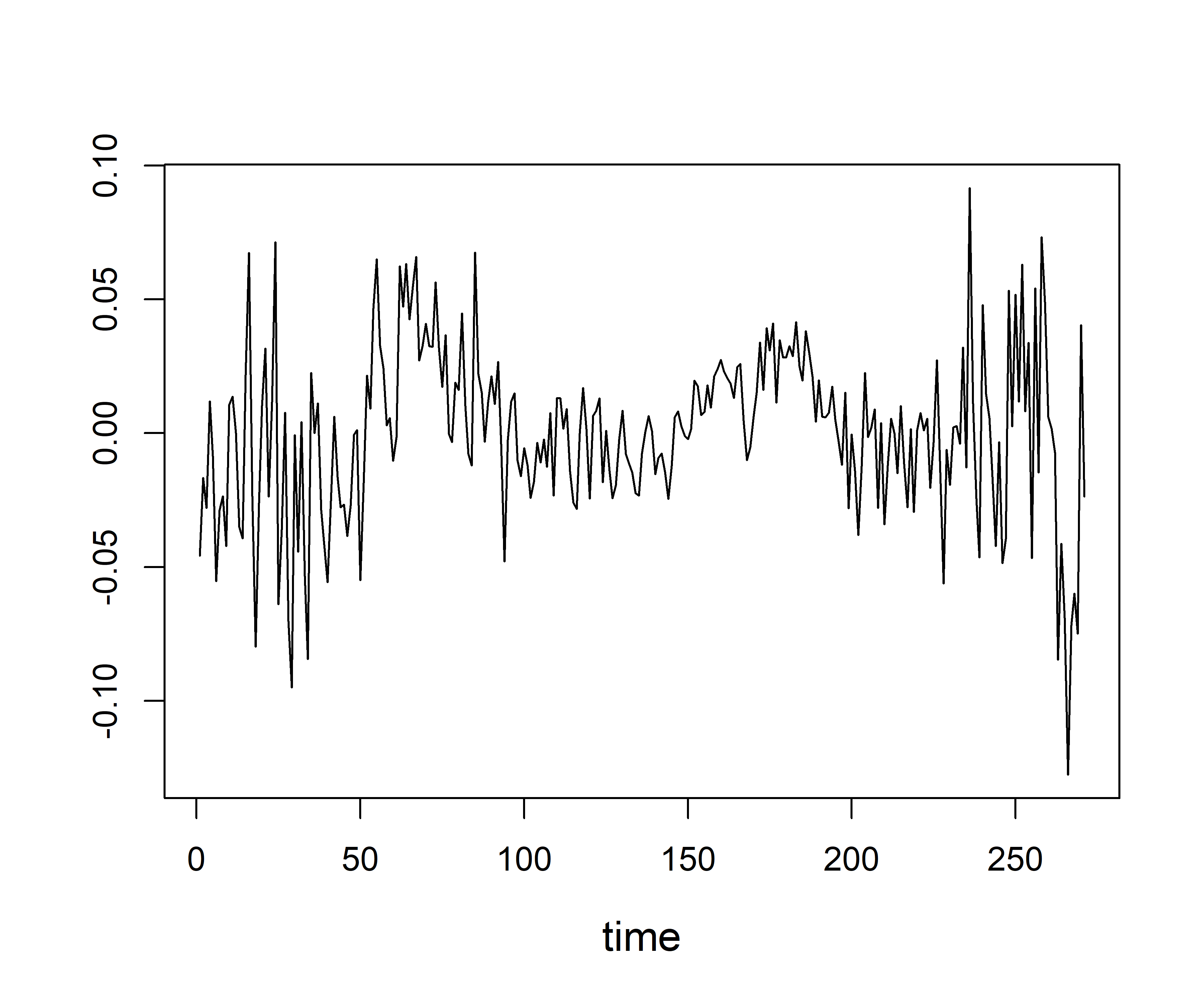}
	\subcaption{$\langle Z_t^0, \hat{v}_{6} \rangle$-male data}
\end{subfigure}
\begin{subfigure}{.32\linewidth}
	\includegraphics[width = \linewidth, trim={0 0 0 4em},clip]{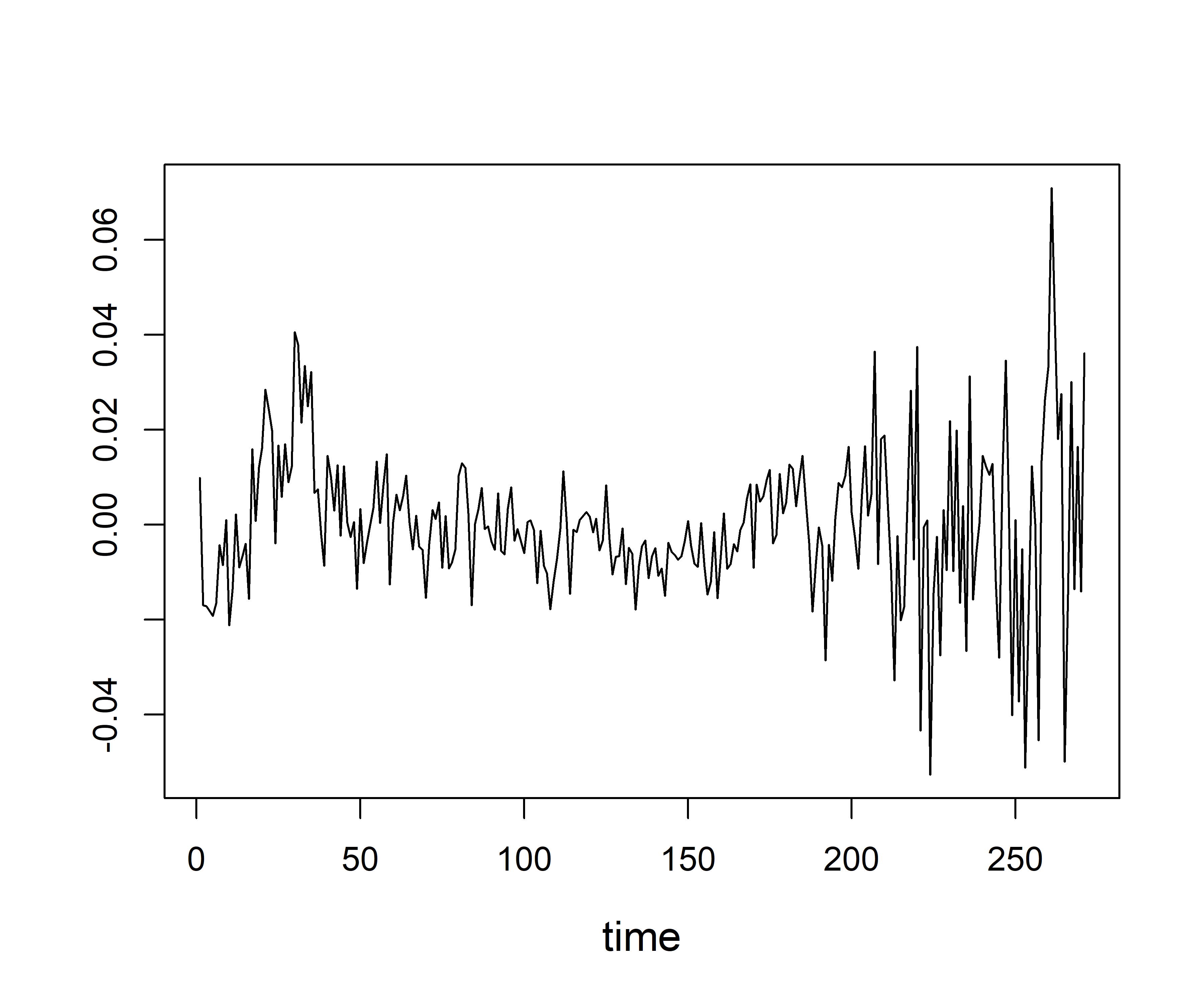}
	\subcaption{$\langle Z_t^0, \hat{v}_{12} \rangle$-male data}
\end{subfigure} \\
\begin{subfigure}{.32\linewidth}
	\includegraphics[width = \linewidth, trim={0 0 0 4em},clip]{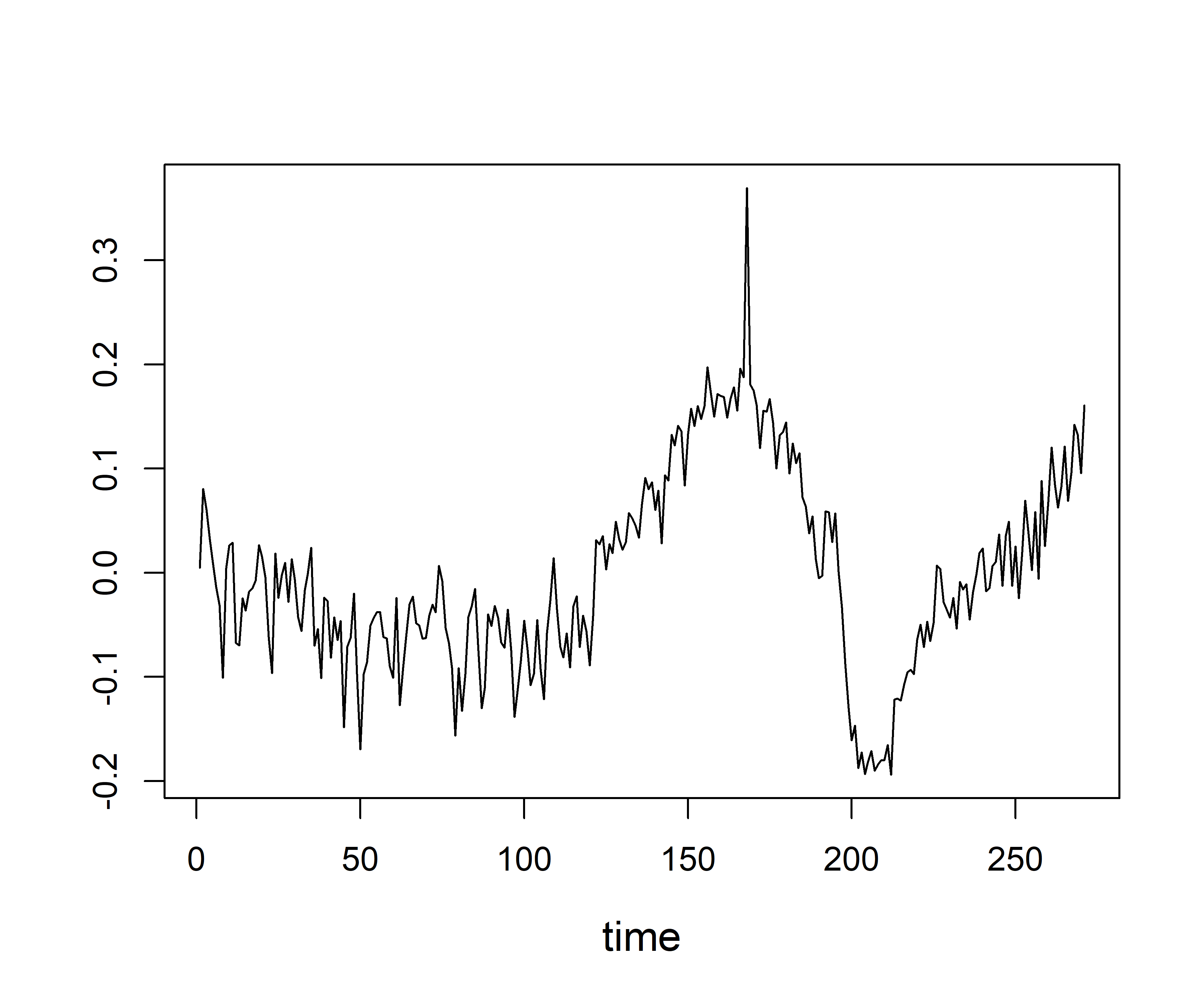}
	\subcaption{$\langle Z_t^0, \hat{v}_{3} \rangle$-female data}
\end{subfigure}
\begin{subfigure}{.32\linewidth}
	\includegraphics[width = \linewidth, trim={0 0 0 4em},clip]{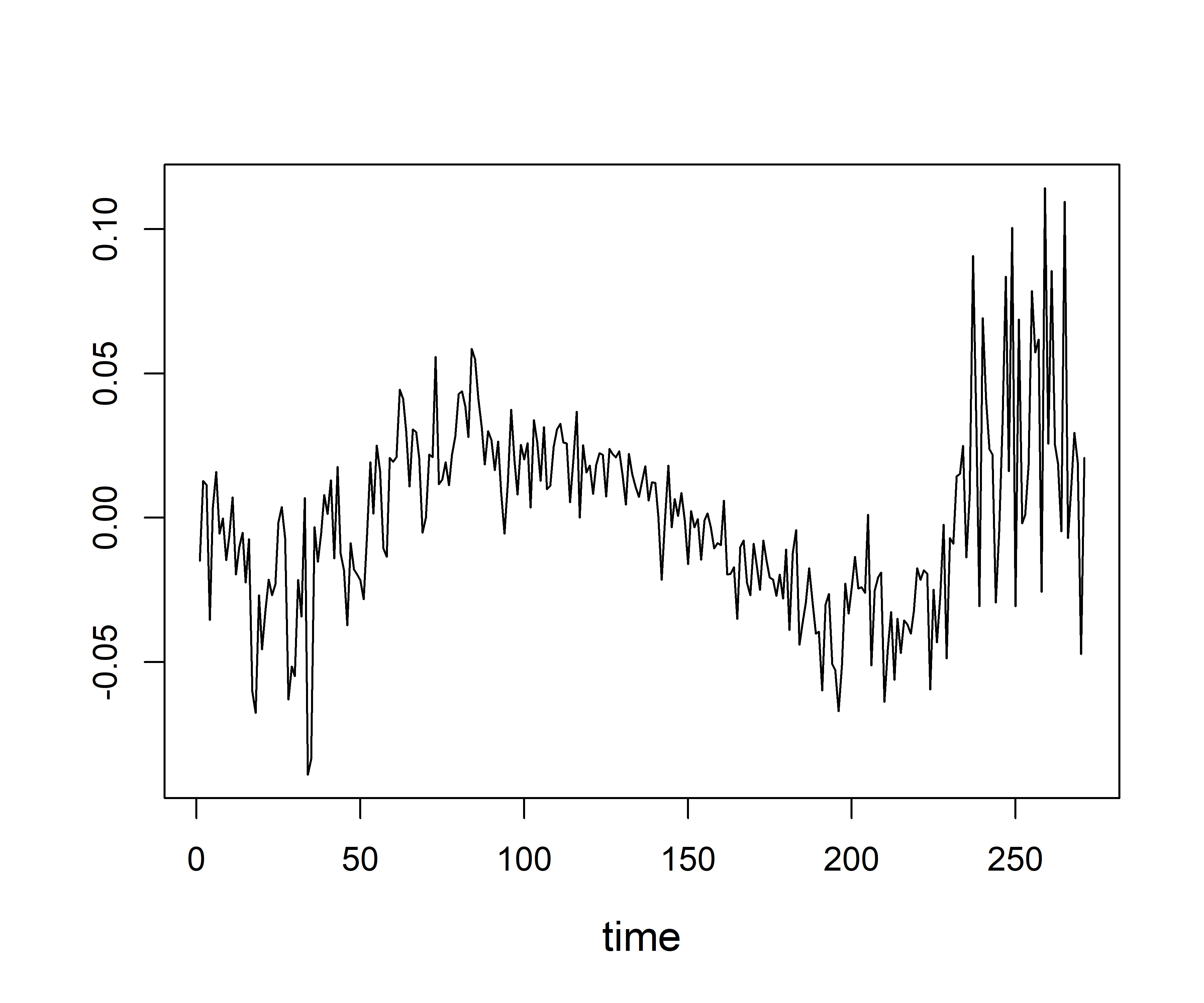}
	\subcaption{$\langle Z_t^0, \hat{v}_{6} \rangle$-female data}
\end{subfigure}
\begin{subfigure}{.32\linewidth}
	\includegraphics[width = \linewidth, trim={0 0 0 4em},clip]{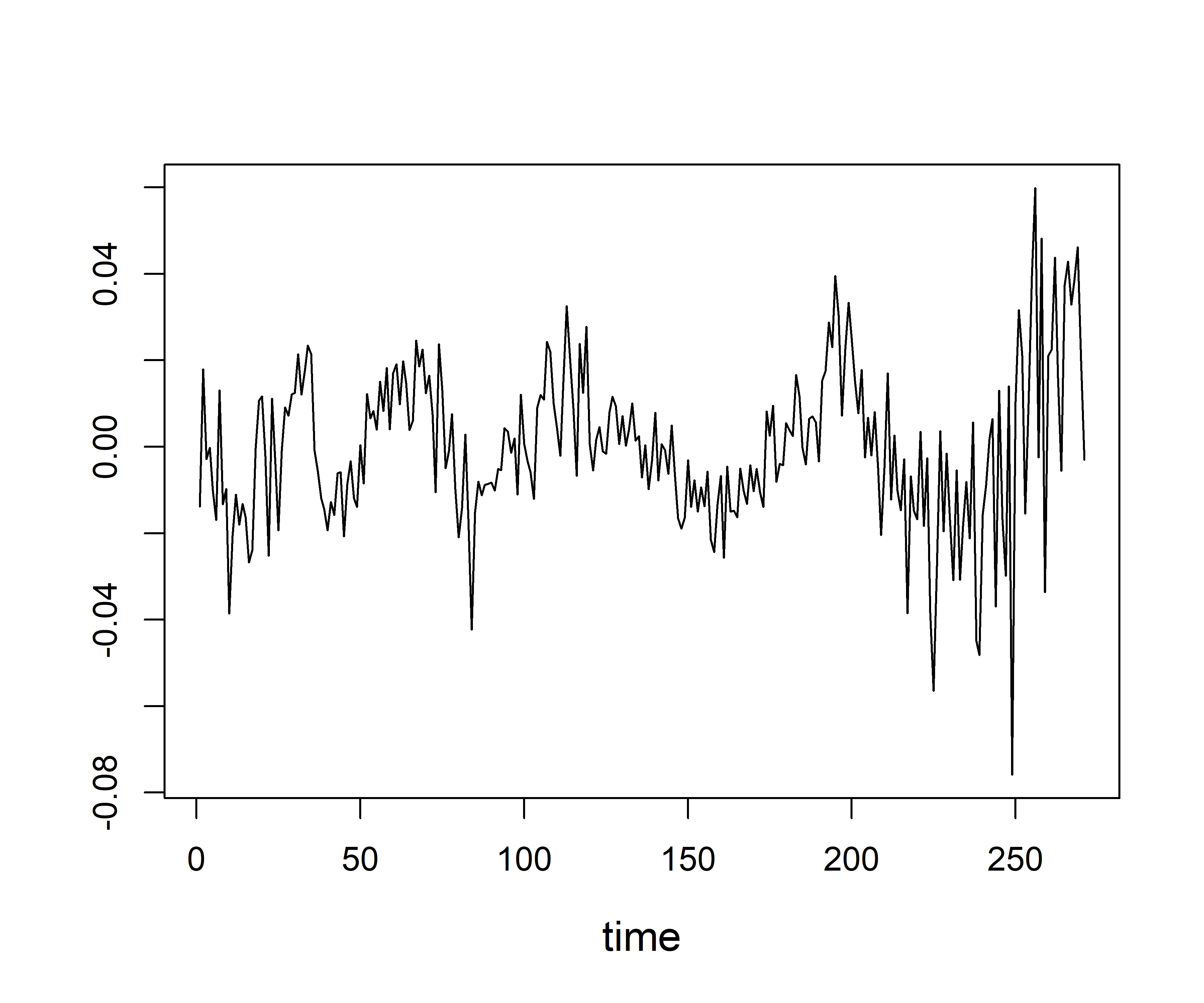}
	\subcaption{$\langle Z_t^0, \hat{v}_{12} \rangle$-female data}
\end{subfigure}
\caption{Sets of estimated principal component scores for the Swedish female and male age-specific mortality rates} \label{fig2}
\end{figure}

\commRV{
\subsection{Empirical application -- Canadian yield curves} \label{sec_yield}
In this section, we apply our methodology to the end-of-month Canadian zero-coupon bond yield curve data for the period spanning January 1991 to April 2023  \phantomsection\label{rvv2sample2}{($T=388$)}; the data used in this section is publicly available at \url{https://www.bankofcanada.ca/rates/interest-rates/bond-yield-curves/}. 
Each observation consists of zero-coupon bond yields at 120 reguarly spaced maturities varying from 0.25 to 30 (years). As in the previous section, we view yields at various maturities as functional observations as in, for example, \cite{hays2012functional} and \cite{MartinezHernandez2020}, and then apply our inferential methods to the considered data. The yield curves ($Y_t$) and their mean-corrected versions ($Y_t-T^{-1}\sum_{t=1}^T Y_t$) are visualized in Figure~\ref{fig1add}. 
\begin{figure}
\begin{subfigure}{.495\linewidth}
	\includegraphics[width = \linewidth, trim={0 0 0 5em},clip]{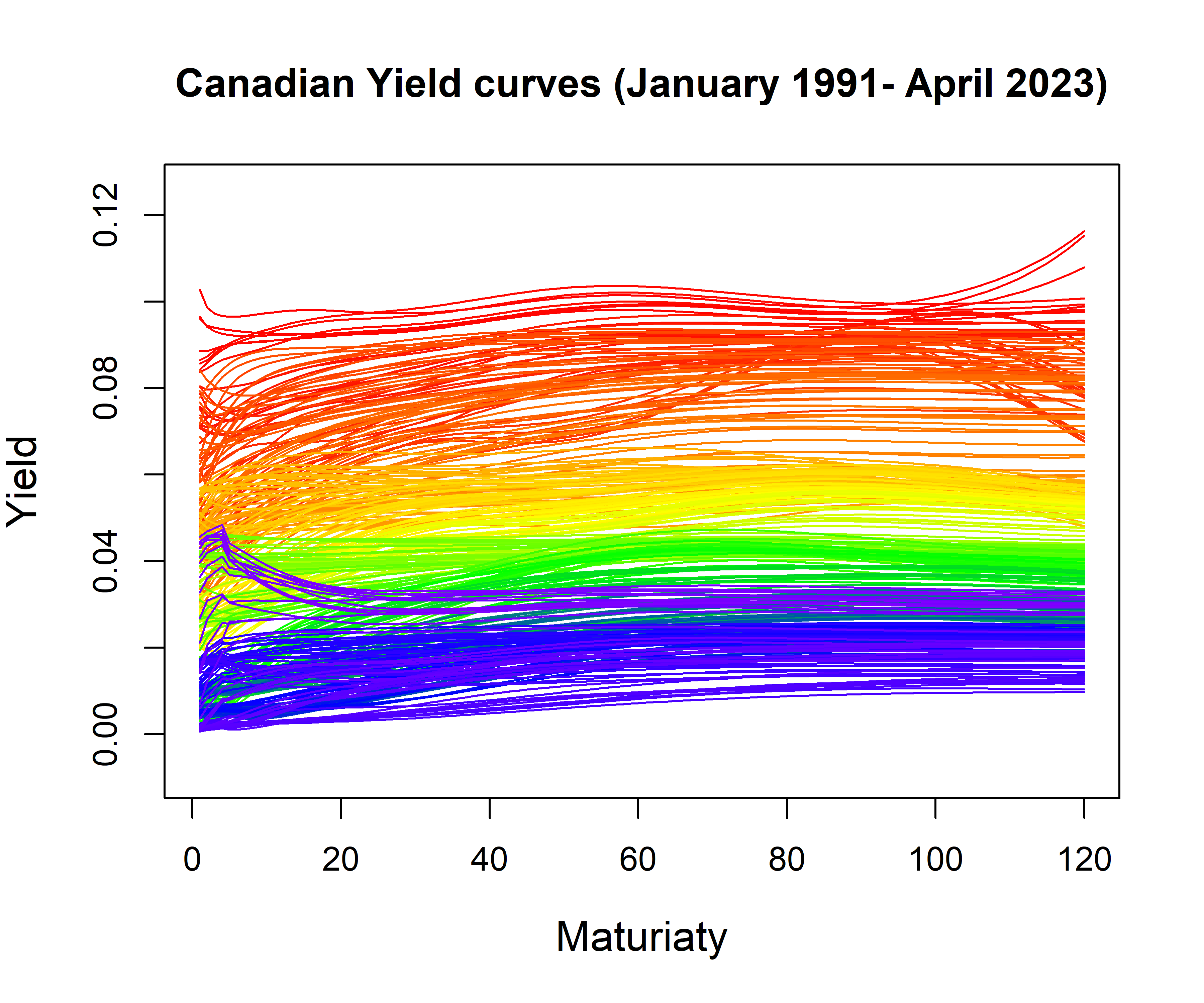}
	\subcaption{Yield curves}
\end{subfigure}
\begin{subfigure}{.495\linewidth}
	\includegraphics[width = \linewidth, trim={0 0 0 5em},clip]{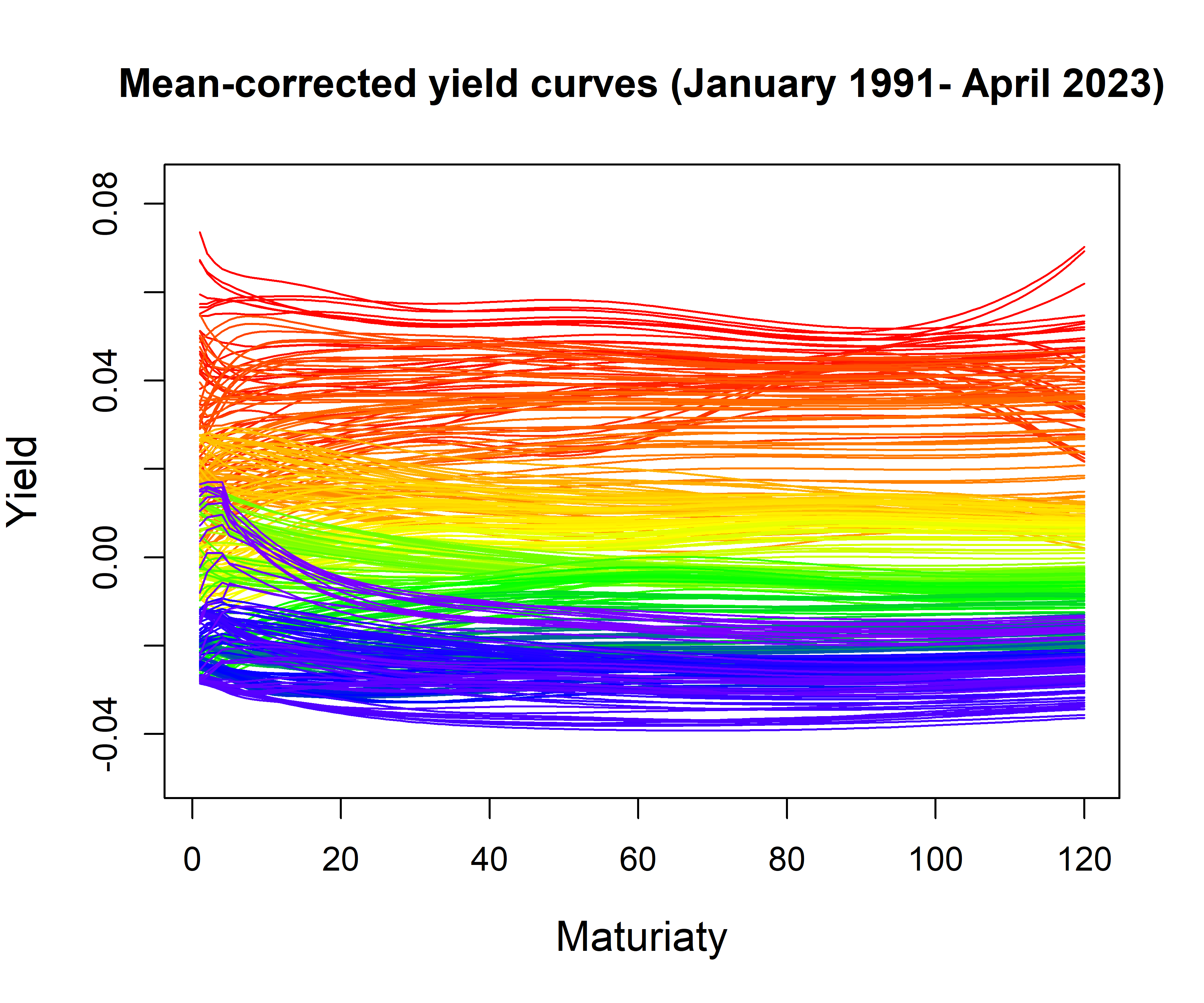}
	\subcaption{Mean-corrected yield curves}
\end{subfigure}
\caption{Canadian yield curves} \label{fig1add}
\begin{justify}
	{\footnotesize{Notes: The data for a specific month is given by a 120-dimensional vector of yields at various maturities from age 0.25 to 30 (years), and each of such vectors is plotted as a function of maturity. }}
\end{justify}
\end{figure}

In our statistical analysis, we represent the observed yield curves as functions, similar to the approach used in the previous mortality example; we conducted a sensitivity analysis as in Section~\ref{sec:mortality} by varying the number of basis functions and found that the reported results remained largely unchanged, with only minor variations. We first  estimate the memory parameter $\dn$ of the time series. The top rows of Table~\ref{tabemp1add} report the local Whittle estimates. As observed in the literature (e.g., \cite{BuschNautz2010} and \cite{Li2020}), the time series exhibits a high degree of persistence, with the memory close to unity, leading to the conclusion that the time series is nonstationary. We then apply the proposed testing procedure to estimate the dimension of the dominant subspace for the yield curves. The testing results, along with the eigenvalue-ratio estimates ($\hat{q}_{\dnn}$) for comparison, are reported in Table~\ref{tabemp2add}. 
The estimated dimension of the dominant subspace from the proposed testing procedure is~6 while the eigenvalue-ratio estimate is~1. As in Section~\ref{sec:mortality}, we are also inclined to conclude that $q_{\dnn} = 6$ due to the overall superior performance of our testing procedure observed in the previous simulation study. Then, the dominant subspace may be estimated by the span of the eigenvectors corresponding to the first six largest eigenvalues of the sample covariance operator.  

Assuming that the additional conditions given in Assumption~\ref{assum2add} are satisfied, $\ds$ and $q_{\dss}$ can also be estimated by using the proposed methods. The bottom rows of Tables~\ref{tabemp1add} and~\ref{tabemp2add} report the estimates. As in Section~\ref{sec:mortality}, these results might not be that meaningful if Assumption~\ref{assum2add} is violated. In parallel to Section~\ref{sec:mortality}, in Figure~\ref{fig2add}, we report the time series of $\langle Z_t^0, \hat{v}_{j} \rangle$ for a few different values of $j$, which are selected so that they exhibit different behaviors. While we can find strong evidence of nonstationarity from panel (a), it is unclear if Assumption~\ref{assum2add} holds and/or if the estimate of $q_{\dss}$ is reliable. This uncertainty arises from the observation that the time series in panel (c) does not appear to exhibit short-range dependence. This may be due to inaccuracy of our proposed estimator of $q_{\dss}$ in finite samples as discussed in Section~\ref{sec_simulation1}. 

\begin{table}[!htb]
\caption{Local Whittle estimation -- Canadian yield curve.} \label{tabemp1add}
\begin{tabular*}{1\linewidth}{@{\extracolsep{\fill}}lcc@{}}
	\toprule
	Target	& Method  &  Estimate \\ \midrule	
	$\dn$ 	&  Proposed  & 0.997   \\ 
	&  LRS-type  & 0.984 \\ \midrule	
	$\ds$ 	&  Proposed  & 0.384  \\ 
	&  LRS-type  & 0.377  \\ \bottomrule	
\end{tabular*} 
{\footnotesize{Notes: The estimators considered in this table are equivalent to those in Table~\ref{tabemp1}.}}
\end{table}

\begin{table}[!htb]
\caption{Dimension estimation -- Canadian yield curve.} \label{tabemp2add}
\begin{tabular*}{1\linewidth}{@{\extracolsep{\fill}}lcc@{}}
	\toprule
	Target	& Method  &  Estimate   \\ \midrule	
	$q_{\dnn}$ 	&  Proposed  &6  \\ 
	&  LRS-type  & 1   \\ \midrule	
	$q_{\dss}$ 	&  Proposed  & 5   \\ \bottomrule	
\end{tabular*} 
{\footnotesize{Notes: The estimators considered in this table are equivalent to those in Table~\ref{tabemp2} with $q_{\max} = 7$}.} 
\end{table}

\begin{figure}[!htb]
\centering
\begin{subfigure}{.43\linewidth}
	\includegraphics[width = \linewidth, trim={0 0 0 4em},clip]{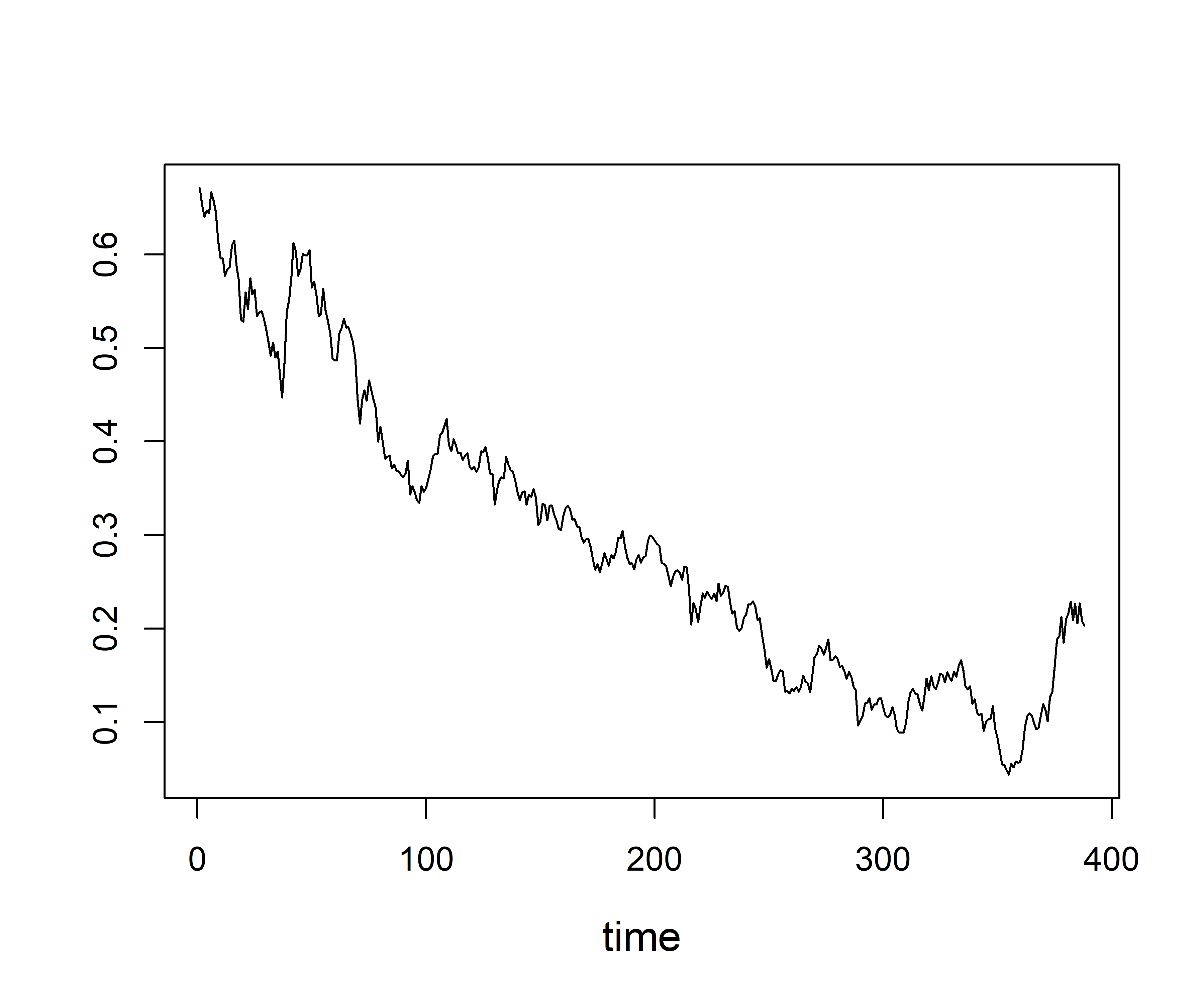}
	\subcaption{$\langle Z_t^0, \hat{v}_{1} \rangle$}
\end{subfigure}
\begin{subfigure}{.43\linewidth}
	\includegraphics[width = \linewidth, trim={0 0 0 4em},clip]{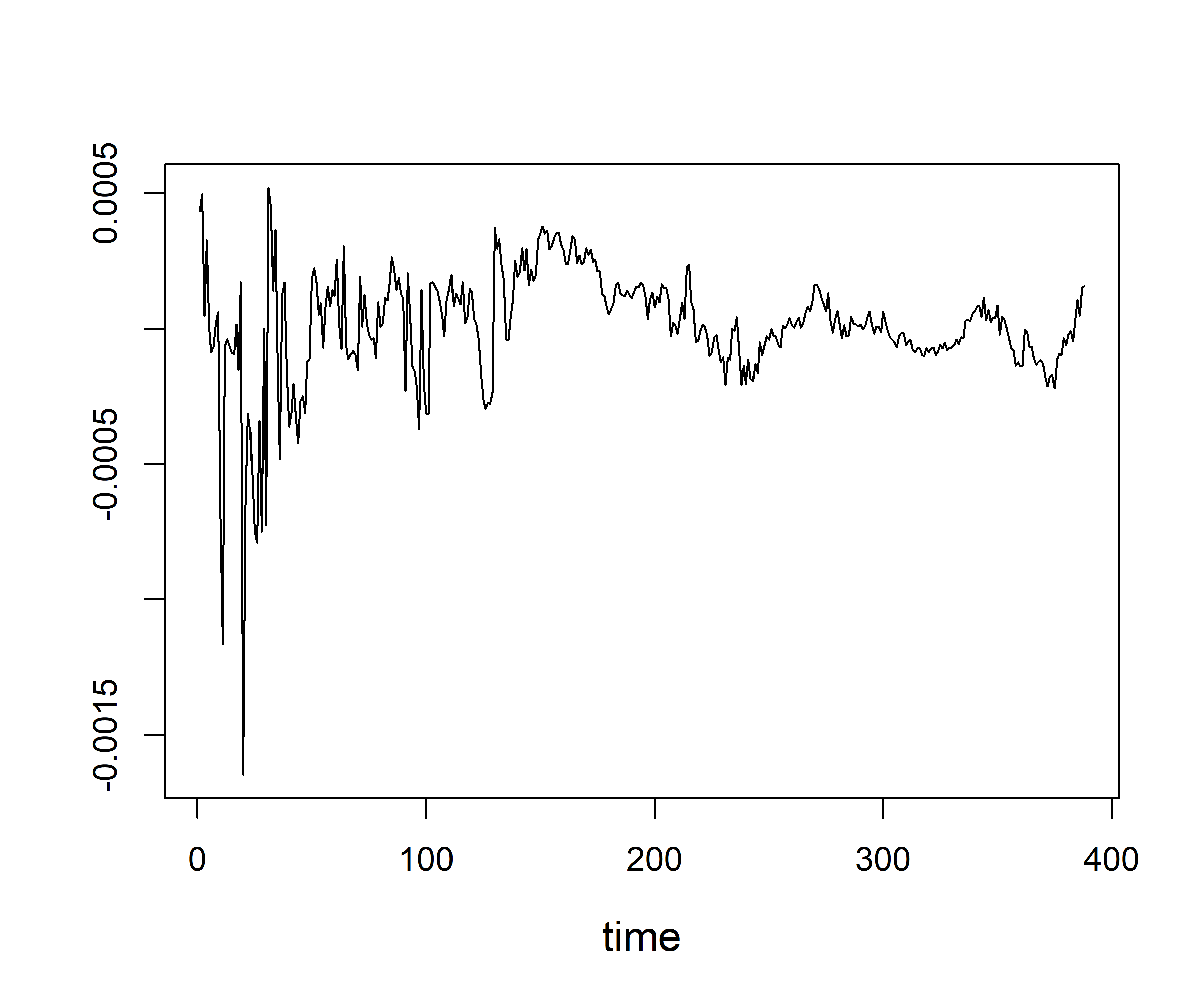}
	\subcaption{$\langle Z_t^0, \hat{v}_{7} \rangle$}
\end{subfigure}\\
\begin{subfigure}{.43\linewidth}
	\includegraphics[width = \linewidth, trim={0 0 0 4em},clip]{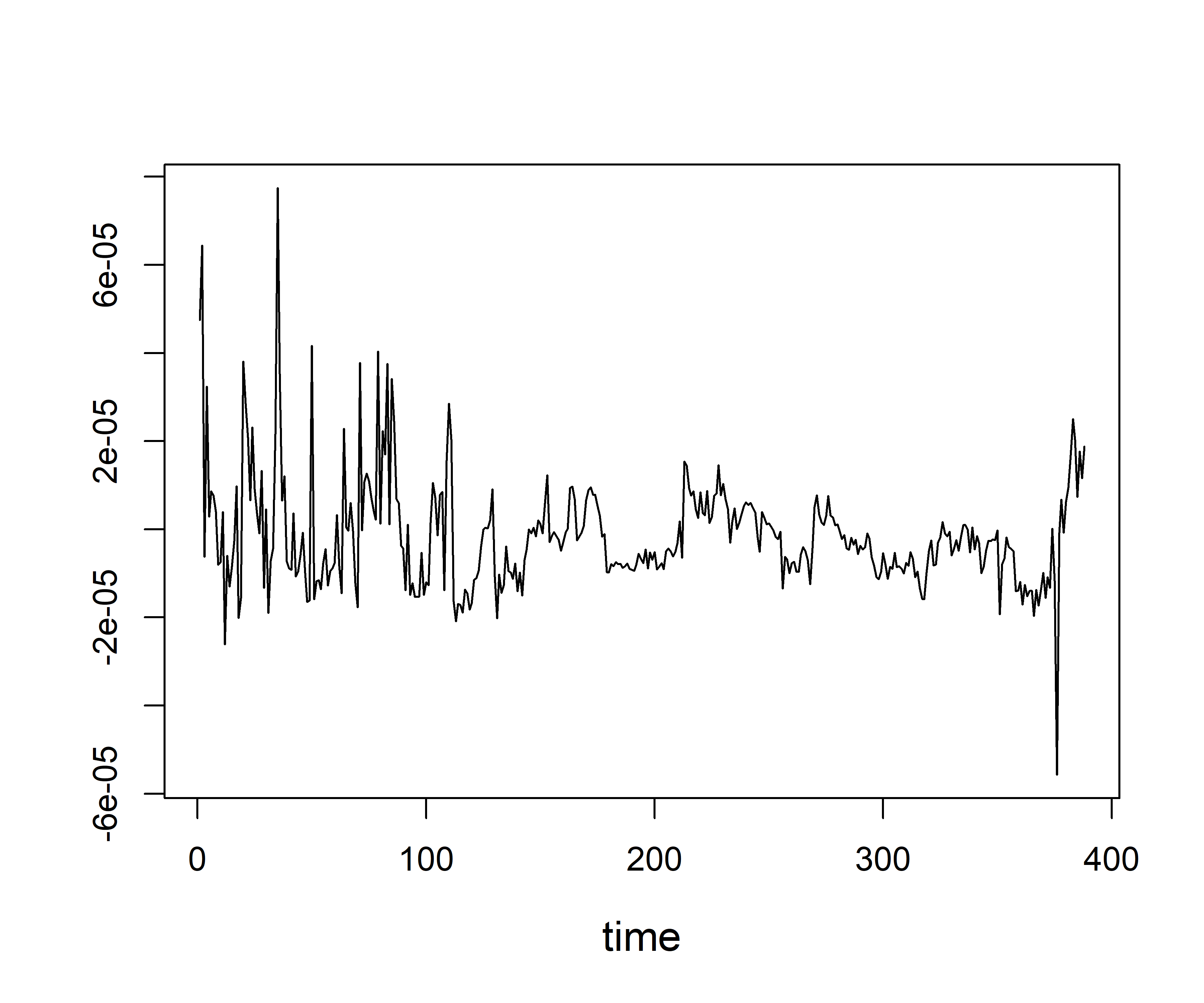}
	\subcaption{$\langle Z_t^0, \hat{v}_{12} \rangle$}
\end{subfigure}  
\begin{subfigure}{.43\linewidth}
	\includegraphics[width = \linewidth, trim={0 0 0 4em},clip]{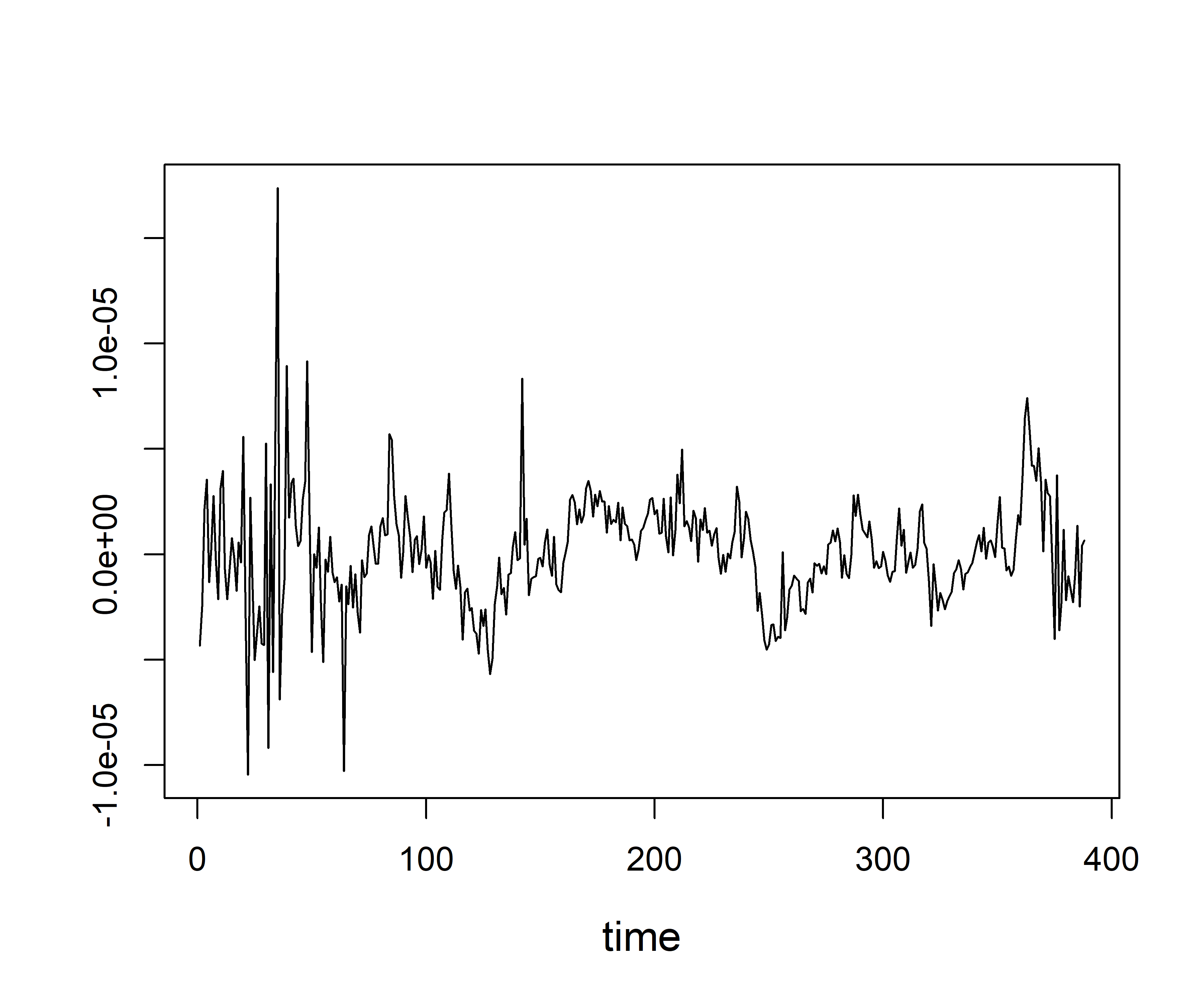}
	\subcaption{$\langle Z_t^0, \hat{v}_{15} \rangle$}
\end{subfigure}  
\caption{Sets of estimated principal component scores for the Canadian yield curves} \label{fig2add}
\end{figure}
}

\section{Conclusion}\label{sec:conclude}

This article has introduced a fractionally cointegrated curve time series with long-range dependence and derived some relevant asymptotic theorems. The functional dependence structure is specified via the projections of the curve process onto different subspaces spanned by additive orthonormal functions. The subspaces can be split into nonstationary and stationary components. The determination of the dimensions of the subspaces is carried out via our proposed tests, which outperform the modified eigenvalue ratio estimator in terms of correct identification. We have shown that the projection of the curve-valued process onto the subspaces retains most of the sample information from the original process. We also present a local Whittle estimator to estimate the memory parameter. The methodologies are illustrated via simulation and empirical applications to Swedish age-specific mortality rates and Canadian yield curves.

The article might be extended in two directions: 
\begin{inparaenum}
\item[(i)] nonstationary cointegration and 
\item[(ii)] cointegration in long-range dependent processes. 
\end{inparaenum}
In this paper, we only consider the case with $\dn>1/2$ and $\ds<1/2$. However, it is also possible to have $\dn>1/2$ and $1/2\leq \ds< \dn$, which corresponds to the case with nonstationary cointegration. As may be expected from the recent paper by \cite{Johansen2019}, this research direction will require a new theoretical approach. Considering the case where $\dn<1/2$ but $0<\ds<\dn$ may also be interesting. Given that the memory of a certain linear combination of the original time series is strictly smaller than the highest memory, this may be understood as a cointegration in long-range dependent processes. It is reasonable to assume that functional time series exhibiting long-range dependence may allow this kind of memory reduction while relevant theoretical results are currently absent.

\section*{Acknowledgment}

The authors wish to thank the editor and two reviewers, whose valuable comments led to an improved version of this article.

\appendix 
\section{Mathematical Appendix} \label{sec_appen}
It will be convenient to define some notation for the subsequent discussion. We first define 
\begin{align} \label{eqomega}
\Omega^2  =  P\left(\sum_{j=0}^\infty \psi_j\right) C_{\varepsilon}\left(\sum_{j=0}^\infty \psi_j\right)^\ast P.
\end{align}
and let $\{\beta_j,u_j\}_{j=1}^{q_{\dnn}}$ be the eigenvalues and the eigenvectors of  $\Omega^2$. The square-root operator of $\Omega^2$ is well-defined and it is simply denoted by $\Omega$. We then let $W_{\dn}$ and $W_{{\dn}+\alpha}$ denote Type II fractional Brownian motions of order $\dn$ and $\dn+\alpha$ taking values in $\mathcal H_N = \ran P$ driven by the common Brownian motion whose covariance operator is given by $\sum_{j=1}^{q_{\dnn}} u_j \otimes u_j$. Define
\begin{align*}
\bar{W}_{\dn}(r) &= 	 W_{\dn}(r) - \int_{0}^1 W_{\dn}(s)ds,\\
\widetilde{W}_{\dn+\alpha}(r) &=  W_{\dn+\alpha}(r) - \left(\int_{0}^1 W_{\dn+\alpha}(s)ds\right)\left(\int_{0}^r \frac{(r-s)^{\alpha-1}}{\Gamma(\alpha)}ds\right).
\end{align*} 

We first provide a useful lemma that will be used in the subsequent sections.
\begin{lemma}\label{lemwkc} 
Suppose that $\bar{Z}_t$ and $\tilde{Z}_t$ are defined as in \eqref{eqzbar} and \eqref{eqztilde} for $\alpha> 0$, and the time series $Z_t$ satisfies Assumption~\ref{assum1}.  Then 
\begin{align} 
	T^{1/2-\dn}\bar{Z}_{\truc{Tr}} 	&\Rightarrow \Omega \bar{W}_{\dn}(r), \label{eqlem1} \\
	T^{1/2-\dn-\alpha}\widetilde{Z}_{\truc{Tr}}  &\Rightarrow \Omega \widetilde{W}_{\dn+\alpha}(r),  \notag
\end{align}
where $\Rightarrow$ denotes the weak convergence in $\mathcal D[0,1]$ of $\mathcal H_N$-valued functions.
\end{lemma}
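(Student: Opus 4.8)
The plan is to establish the two functional central limit theorems by decomposing each demeaned/fractionally-integrated partial process into the part living in the dominant subspace $\mathcal H_N = \ran P$ and the part living in the cointegrating subspace $\mathcal H_S = \ran(I-P)$, and then showing that only the $\mathcal H_N$-component survives the normalization $T^{1/2-d}$ (resp.\ $T^{1/2-d-\alpha}$), converging to a scaled Type II fractional Brownian motion, while the $\mathcal H_S$-component is asymptotically negligible. First I would note that, up to centering terms that are themselves asymptotically negligible after normalization, $\bar Z_{\truc{Tr}}$ behaves like $Z^0_{\truc{Tr}} = Y_{\truc{Tr}} - Y_0$ (invariance to $\mu$ is immediate since $\bar Z_t$ removes the sample mean). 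Applying $P$ to $Z^0_t$ and using \eqref{eqstart1}, $P(Y_t - Y_0) = \Delta^{-d}_+ P X_t = \Delta^{-d}_+ P\sum_{j\geq 0}\psi_j \varepsilon_{t-j}$, so the leading term is a truncated fractional integration of order $d$ of the $\mathcal H_N$-valued I(0) process $P X_t$.

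Second, I would invoke a functional invariance principle for truncated fractionally integrated partial sums in Hilbert space: since $P X_t$ is I(0) with $\sum_j j\|\psi_j\|^2 < \infty$ and $\mathbb E\|\varepsilon_t\|^\tau < \infty$ for $\tau > \max\{4, 2/(2d-1)\}$, the normalized process $T^{1/2-d}\Delta^{-d}_+ P X_{\truc{Tr}}$ converges weakly in $\mathcal D[0,1]$ (for $\mathcal H_N$-valued functions) to $\Omega \bar W_d(r)$ before centering, and to $\Omega\bar W_d(r)$ after subtracting the integral, where $\Omega$ is the square root of $\Omega^2 = P(\sum_j\psi_j)C_\varepsilon(\sum_j\psi_j)^\ast P$ as in \eqref{eqomega}; the long-run covariance of $P X_t$ is exactly $\Omega^2$, which pins down the driving Brownian motion with covariance $\sum_{j=1}^{q_d} u_j\otimes u_j$ and the correct overall scale. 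This is essentially the Hilbert-space analogue of the Marinucci--Robinson Type II fBm FCLT; I would either cite an existing functional version or give a short argument via the Beveridge--Nelson-type decomposition $\Delta^{-d}_+ P X_t = \Delta^{-d}_+ P(\sum_j\psi_j)\varepsilon_t + (\text{a remainder of smaller order})$, where the remainder is controlled using summability of $j\|\psi_j\|^2$ and tightness follows from moment bounds on the fractionally integrated innovation partial sums plus a maximal inequality. Third, I would show that $(I-P)Z^0_{\truc{Tr}}$, which by \eqref{eqstart2} equals $(I-P)Y_{\truc{Tr}} - (I-P)Y_0 = \sum_j b_j\varepsilon_{\truc{Tr}-j} - (I-P)Y_0$ — a stationary process since $d - b < 1/2$ — is $O_p(1)$ in norm uniformly, hence $T^{1/2-d}$ times it tends to zero in $\mathcal D[0,1]$; combining via the continuous mapping theorem gives \eqref{eqlem1}. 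The second display is identical in structure: $\widetilde Z_t = \Delta^{-\alpha}_+\bar Z_t$, so its $\mathcal H_N$-component is a truncated fractional integration of order $d+\alpha$ of $P X_t$, yielding $\Omega\widetilde W_{d+\alpha}(r)$ after the (now $\alpha$-dependent) centering, while its $\mathcal H_S$-component is $\Delta^{-\alpha}_+$ of a stationary I$(d-b)$ process, of order $O_p(T^{\max\{0,d-b+\alpha-1/2\}})$ in sup-norm, which is $o(T^{d+\alpha-1/2})$ since $d > 1/2$; thus it too vanishes under normalization.

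The main obstacle I expect is the rigorous FCLT for the truncated fractional partial-sum operator in the Hilbert-space setting together with the required tightness in $\mathcal D[0,1]$: fractional integration of order $d > 1/2$ produces long-range dependence, so the usual moment-based tightness criteria must be adapted to the fractional kernel, and one must verify that the moment condition $\tau > 2/(2d-1)$ is exactly what is needed to control the tail of the fractionally integrated innovations (this is the source of that sharp exponent). A secondary technicality is handling the interplay between the two different centerings — the sample-mean subtraction in $\bar Z_t$ versus the $\Gamma$-weighted integral term in $\widetilde W_{d+\alpha}$ — and checking that applying $\Delta^{-\alpha}_+$ to the demeaned series produces precisely the advertised limiting object $\widetilde W_{d+\alpha}$; I would verify this by writing $\Delta^{-\alpha}_+$ in its partial-sum kernel form, passing the weak limit through (using that the kernel $\sum_{k} \Gamma(k+\alpha)/(\Gamma(\alpha)\Gamma(k+1)) \mathds 1\{\cdot\}$ acts continuously on $\mathcal D[0,1]$ in the relevant topology), and matching the resulting stochastic integral representation against the definition of $\widetilde W_{d+\alpha}$.
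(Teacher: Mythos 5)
Your proposal is correct and follows essentially the same route as the paper: decompose along $P$ and $I-P$, show the stationary $\mathcal H_S$-component vanishes under the $T^{1/2-d}$ (resp.\ $T^{1/2-d-\alpha}$) normalization, invoke a Hilbert-space invariance principle for the truncated fractionally integrated dominant component (the paper simply cites Proposition~2.1 of \citealp{Li2020} rather than re-deriving it via a Beveridge--Nelson argument), and handle the $\Delta^{-\alpha}_+$-transformed sample mean through the kernel convergence $T^{-\alpha}\sum_{k\le \truc{Tr}}\pi_{\truc{Tr}-k}(\alpha)\to\int_0^r\frac{(r-s)^{\alpha-1}}{\Gamma(\alpha)}ds$ as in \cite{Nielsen2010}. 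The only blemish is the loose phrasing that the centering is ``asymptotically negligible'': the sample-mean term is exactly what produces the demeaned limits $\bar W_d$ and $\widetilde W_{d+\alpha}$, as your later steps correctly reflect.
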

\begin{proof} 
We first show (i). \commWS{Note that}
\begin{align}
 	T^{1/2-\dn}\bar{Z}_{\truc{Tr}}  = &   T^{1/2-\dn} P{Y}_{\truc{Tr}} -  T^{-1/2-\dn} \sum_{t=1}^T  PY_t  +  T^{1/2-\dn} (I-P){Y}_{\truc{Tr}} \notag \\ &-  T^{-1/2-\dn} \sum_{t=1}^T  (I-P)Y_t,\label{eq01a}
\end{align}
where $T^{1/2-\dn} (I-P)Y_{\truc{Tr}} \pto 0$ uniformly in $r \in [0,1]$ and $ T^{-1/2-\dn} \sum_{t=1}^T  (I-P)Y_t \pto 0$ since $(I-P)Y_t$ is stationary and $\dn>1/2$. We thus only consider the first term of \eqref{eq01a}. We apply Proposition 2.1 of \cite{Li2020} and the continuous mapping theorem to find that 
\begin{equation*}
	T^{1/2-\dn} \bar{Z}_{\truc{Tr}} \Rightarrow \Omega \left( W_{\dn}(r) - \int_{0}^1 W_{\dn}(s)ds  \right).
\end{equation*}
We next show (ii). Note that 
\begin{equation}\label{eqpff01}
	T^{1/2-\dn-\alpha} \widetilde{Z}_{\truc{Tr}} = T^{1/2-\dn-\alpha} \Delta^{-\alpha}_+ Y_{\truc{Tr}} - T^{1/2-\dn-\alpha} \Delta^{-\alpha}_+  T^{-1} \sum_{t=1}^TY_t. 
\end{equation}
Given that $\Delta^{-\alpha}_+\Delta^{-\dn}_+Y_t =\Delta^{-\dn-\alpha}_+Y_t$ and $\dn+\alpha>1/2$, we find that the first term of~\eqref{eqpff01} satisfies that 
\begin{equation}\label{eq0001}
	T^{1/2-\dn-\alpha} \Delta^{-\alpha}_+ Y_{\truc{Tr}}  \Rightarrow \Omega W_{\dn+\alpha}(r).
\end{equation}
On the other hand, let $\bar{Y}_T = T^{-1} \sum_{t=1}^T Y_t$. Then the second term of~\eqref{eqpff01} is equal to 
\begin{equation}\label{eq0002}
	T^{1/2-\dn-\alpha} \Delta^{-\alpha}_+  \bar{Y}_T = T^{1/2-\dn-\alpha} \sum_{k=1}^{\truc{Tr}} \pi_{{\truc{Tr}-k}}(\alpha) \bar{Y}_T,
\end{equation}
where $\pi_k (\alpha) = \frac{\Gamma(k+\alpha)}{\Gamma(\alpha)\Gamma(k+1)}$. Note that  
\begin{equation*}
	T^{1/2-\dn} \bar{Y}_T\dto \Omega \int_{0}^1  W_{\dn+\alpha}(s)ds
\end{equation*}
and
\begin{equation} \label{eq0003}
	T^{-\alpha}\sum_{k=1}^{\truc{Tr}} \pi_{{\truc{Tr}-k}}(\alpha)	\to \int_{0}^r \frac{(r-s)^{\alpha-1}}{\Gamma(\alpha)}ds, 
\end{equation}
where the convergence result given in~\eqref{eq0003} may be deduced from equation (35) of \cite{Nielsen2010}. An application of Slutsky's theorem (see e.g., p.\ 35 of \cite{VanDerVaart1996}) and the continuous mapping theorem with~\eqref{eq0001}-\eqref{eq0003} give the following convergence result: 
\begin{equation*}
	T^{1/2-\dn-\alpha} \widetilde{Z}_{\truc{Tr}} \Rightarrow \Omega \left( W_{\dn+\alpha}(r) - \int_{0}^1 W_{\dn+\alpha}(s)ds \left(\int_{0}^r \frac{(r-s)^{\alpha-1}}{\Gamma(\alpha)}ds\right)\right)
\end{equation*}
as desired.
\end{proof}	
\begin{remark}
Suppose that $\mu = 0$ and thus $Z_t= Y_t$. Under some appropriate conditions similar to \commWS{ours}, \cite{Li2020} shows that $T^{1/2-\dn}PZ_{\truc{Tr}} \Rightarrow \Omega W_{\dn}(r)$. In the case where $\mu=0$, \eqref{eqlem1} is slightly different from their result because $P$ does not appear on the left-hand side.      
\end{remark}

\subsection*{Proofs of the main results}

\begin{proof}[Proof of Proposition~\ref{prop3}]

We first deduce from Lemma~\ref{lemwkc} and the continuous mapping theorem that
\begin{equation} \label{eqref1}
	T^{-2\dn}\sum_{t=1}^T P\bar{Z_t} \otimes P\bar{Z}_t \dto \int_{0}^{1} \Omega \bar{W}_{\dn}(s)\otimes \Omega\bar{W}_{\dn} (s) ds.
\end{equation}
Note that $\bar{Z}_t=Y_t-\bar{Y}_T$, where \commWS{$\bar{Y}_T = T^{-1}\sum_{t=1}^T Y_t$} and $(I-P)Y_t$ is stationary. From the law of large numbers of stationary ergodic sequences, we find that
\begin{equation} \label{eqref1a}
	T^{-1}\sum_{t=1}^T (I-P)\bar{Z}_t \otimes (I-P)\bar{Z}_t \pto \mathbb{E}[(I-P){Y}_t \otimes (I-P){Y}_t].
\end{equation}
Define $\hat{P}_K$  as in \eqref{defpk}, i.e., $\hat{P}_K = \sum_{j=1}^{K} \hat{v}_j \otimes \hat{v}_j$, where ($\hat{v}_1,\ldots,\hat{v}_K$) are the eigenvectors corresponding to the first $K$ largest eigenvalues of $\hat{C}_{\bar{Z}}$. We may deduce from \eqref{eqref1}, the Skorohod representation theorem, and Lemma 4.3 of \cite{Bosq2000} that the first $q_{\dnn}$ eigenvectors $(\hat{v}_1,\ldots,\hat{v}_{q_{\dnn}})$ converge to a random orthonormal basis of $\ran P$; this proves \eqref{eqde3}. Note also that $(\hat{v}_{q_{\dnn}+1},\ldots,\hat{v}_{K})$ are the eigenvalues of $(I-\hat{P}_{q_{\dnn}})  	\hat{C}_{\bar{Z}}(I-\hat{P}_{q_{\dnn}})$. Since $I-\hat{P}_{q_{\dnn}} \pto (I-P)$ and $(I-P) \hat{C}_{\bar{Z}}(I-P) \pto \mathbb{E}[(I-P)Y_t \otimes (I-P) Y_t]$ (see \eqref{eqref1a}), we have 
\begin{equation*} 
	(I-\hat{P}_{q_{\dnn}})  	\hat{C}_{\bar{Z}}(I-\hat{P}_{q_{\dnn}}) \pto \mathbb{E}[(I-P)Y_t \otimes (I-P) Y_t].
\end{equation*}
Since the $(K-q_{\dnn})$-th largest eigenvalue of  $\mathbb{E}[(I-P)Y_t \otimes (I-P) Y_t]$ is distinct from the next one, the projection $\sum_{j=1}^{K-q_{\dnn}}   v_j^S \otimes  v_j^S$ (where $ v_j^S$ is the eigenvector corresponding to the $j$-th largest eigenvalue) is a well-defined fixed bounded linear operator regardless of if any $j$-th eigenvalue for $j<K-q_{\dnn}$ is repeated (and thus $ v_j^S$ is not uniquely determined) or not. Moreover, in this case, we may deduce from Lemma 4.4 of \cite{Bosq2000} that  
\begin{equation} \label{eqref3}
	\hat{P}_K - \hat{P}_{q_{\dnn}} = \sum_{j=q_{\dnn}+1}^{K} \hat{v}_j \otimes \hat{v}_j  \pto \sum_{j=1}^{K-q_{\dnn}}  v_j^S \otimes v_j^S.  
\end{equation}
We now consider the limiting behavior of $T^{-1} \sum_{t=1}^{T} \hat{P}_K\bar{Z}_t \otimes \hat{P}_K\bar{Z}_t$ of which (almost surely) nonzero eigenvalues are given by $(\hat{\mu}_1,\ldots, \hat{\mu}_K)$. As will be shown later in our proof of Proposition~\ref{prop4}, \eqref{eqref1}-\eqref{eqref3} imply that the first $q_{\dnn}$ eigenvalues, multiplied by $T^{1-2\dn}$, converge to positive (and almost surely bounded) random eigenvalues while the remaining eigenvalues converge to fixed and positive eigenvalues as long as $ \mathbb{E}[(I-P){Y}_t \otimes (I-P){Y}_t]$ allows $K-q_{\dnn}$ nonzero eigenvalues; in particular, see \eqref{pfeq7}. This proves the desired results. 
\end{proof}
\begin{proof}[Proof of Proposition~\ref{prop4}]
We will first show the limiting behaviors of two random operators given by $A_T = \sum_{t=1}^T \bar{Z}_t\otimes \bar{Z}_t$ and $B_T = \sum_{t=1}^T \tilde{Z}_t\otimes \tilde{Z}_t$ when they are understood as the maps acting on $\ran \hat{P}_K$ (the span of the first $K$ eigenvectors $(\hat{v}_1,\ldots,\hat{v}_j)$ of $A_T$). In our proof of Proposition~\ref{prop3}, we showed that $\hat{P}_{q_{\dnn}} = \sum_{j=1}^{q_{\dnn}}\hat{v}_j \otimes \hat{v}_j  \pto P$. Combining this with \eqref{eqref3}, we find that 
\begin{equation*}
	\hat{P}_K =  \hat{P}_{q_{\dnn}} + (\hat{P}_K -  \hat{P}_{q_{\dnn}}) \pto P + \sum_{j=1}^{K-q_{\dnn}}   v_j^S \otimes  v_j^S =: P_K,
\end{equation*}
where $P_K$ is a well-defined and fixed projection. 

Let $\hat{P}_K^N = \hat{P}_K P$, $\hat{P}_K^S = \hat{P}_K (I-P)$, ${P}_K^N = {P}_K P$, ${P}_K^S = {P}_K (I-P)$ and $D_T = \left( \begin{smallmatrix} T^{-\dn} I_1 & 0 \\ 0 & T^{-1/2} I_2 \end{smallmatrix}\right)$, where $I_1$ and $I_2$ are the relevant identity maps of rank $q_{\dnn}$ and $K-q_{\dnn}$, respectively.  Given that $B_T = \hat{P}_KB_T\hat{P}_K$ and $\hat{P}_K = \hat{P}_K^N + \hat{P}_K^S$ holds, we may understand $T^{-2\alpha} D_TB_TD_T$ as the following operator matrix:
\begin{align*}
	&T^{-2\alpha}D_TB_TD_T = \\ & \left(\begin{matrix}
		T^{-2\dn-2\alpha}\sum_{t=1}^T \hat{P}_K^N\tilde{Z}_t\otimes \hat{P}_K^N\tilde{Z}_t  &  T^{-\dn-1/2-2\alpha}\sum_{t=1}^T \hat{P}_K^S\tilde{Z}_t\otimes \hat{P}_K^N\tilde{Z}_t  \\
		T^{-\dn-1/2-2\alpha} \sum_{t=1}^T \hat{P}_K^N\tilde{Z}_t\otimes \hat{P}_K^S\tilde{Z}_t  &  T^{-1-2\alpha}  \sum_{t=1}^T \hat{P}_K^S\tilde{Z}_t\otimes \hat{P}_K^S\tilde{Z}_t  
	\end{matrix}\right).
\end{align*}
Note that $\hat{P}_K \pto P_K$ and $$T^{-2\dn-2\alpha}\sum_{t=1}^T \hat{P}_K^N\tilde{Z}_t\otimes \hat{P}_K^N\tilde{Z}_t= \hat{P}_K \left(T^{-2\dn-2\alpha}\sum_{t=1}^T P\tilde{Z}_t\otimes P\tilde{Z}_t  \right) \hat{P}_K.$$  We thus deduce from Lemma~\ref{lemwkc} that 
\begin{align} \label{eqpf0}
	T^{-2\dn-2\alpha}\sum_{t=1}^T P_K^N\tilde{Z}_t\otimes  P_K^N\tilde{Z}_t  \dto  \int_{0}^{1} \Omega \widetilde{W}_{\dn+\alpha}  (s)\otimes \Omega \widetilde{W}_{\dn+\alpha} (s) ds.  
\end{align}
Combining these results, we find that 
\begin{align} \label{pfeq1}
	T^{-2\dn-2\alpha}\sum_{t=1}^T \hat{P}_K^N\tilde{Z}_t\otimes \hat{P}_K^N\tilde{Z}_t  \dto  \int_{0}^{1} \Omega \widetilde{W}_{\dn+\alpha} (s) \otimes \Omega \widetilde{W}_{\dn+\alpha} (s) ds. 
\end{align} 
Using the isomorphism between $\mathbb{R}^K$ and any $K$-dimensional Hilbert space and the arguments used in the proof of Lemma 6(f) of \cite{Nielsen2010}, we may deduce the following: 
\begin{equation*}
	T^{-\dn-\psi-\alpha}(\log T)^{-\mathds{1}\{\psi=1/2\}}\sum_{t=1}^T {P}_K^S\tilde{Z}_t\otimes {P}_K^N\tilde{Z}_t = O_p(1), 
\end{equation*}
where $\psi = \max\{\ds+\alpha, 1/2\}$. Note that $\psi < 1/2 + \alpha$ and $\hat{P}_K \pto P_K$, from which we find that
\begin{equation} \label{pfeq2}
	T^{-\dn-1/2-2\alpha}\sum_{t=1}^T \hat{P}_K^S\tilde{Z}_t\otimes \hat{P}_K^N\tilde{Z}_t \pto 0.
\end{equation}
With nearly identical arguments, we also find that 
\begin{equation*} 
	T^{-\dn-1/2-2\alpha}\sum_{t=1}^T {P}_K^N\tilde{Z}_t\otimes {P}_K^S\tilde{Z}_t \pto 0.
\end{equation*} 
Lastly, we note that $\sum_{t=1}^T \hat{P}_K^S\tilde{Z}_t\otimes \hat{P}_K^S\tilde{Z}_t = O_p(T^{-1})$ if $\ds+\alpha < 1/2$. On the other hand, if  $\ds+\alpha\geq 1/2$, it can be shown from Lemma 6(e) of \cite{Nielsen2010} that $\sum_{t=1}^T \hat{P}_K^S\tilde{Z}_t\otimes \hat{P}_K^S\tilde{Z}_t = O_p(T^{-2\psi}(\log T)^{-\mathds{1}\{\psi=1/2\}})$. Note that $2\psi < 1 + 2\alpha $ since $\ds<1/2$, we thus find that  
\begin{equation}\label{pfeq3}
	T^{-1-2\alpha}  \sum_{t=1}^T \hat{P}_K^S\tilde{Z}_t\otimes \hat{P}_K^S\tilde{Z}_t  \pto 0.
\end{equation}
Combining \eqref{pfeq1}-\eqref{pfeq3}, we find that 
\begin{align}\label{pfeq3a}
	T^{-2\alpha}D_TB_TD_T \dto  \left(\begin{matrix}
		\int_{0}^{1} \Omega \widetilde{W}_{\dn+\alpha}(s) \otimes \Omega \widetilde{W}_{\dn+\alpha} (s) ds &  0\\
		0 &  0
	\end{matrix}\right).
\end{align}
We next consider $D_TA_TD_T$. Given that $A_T = \hat{P}_KA_T\hat{P}_K$ holds, we may understand $T^{-2\alpha} D_TA_TD_T$ as the following operator matrix:
\begin{align*}
	D_TA_TD_T = \left(\begin{matrix}
		T^{-2\dn}\sum_{t=1}^T \hat{P}_K^N\bar{Z}_t\otimes \hat{P}_K^N\bar{Z}_t  &  T^{-\dn-1/2}\sum_{t=1}^T \hat{P}_K^S\bar{Z}_t\otimes \hat{P}_K^N\bar{Z}_t  \\
		T^{-\dn-1/2} \sum_{t=1}^T \hat{P}_K^S\bar{Z}_t\otimes \hat{P}_K^N\bar{Z}_t  &  T^{-1}  \sum_{t=1}^T \hat{P}_K^S\bar{Z}_t\otimes \hat{P}_K^S\bar{Z}_t  
	\end{matrix}\right).
\end{align*}
Similarly as in~\eqref{eqpf0}, we find that
\begin{equation*}
	T^{-2\dn}\sum_{t=1}^T P_K^N\bar{Z}_t\otimes  P_K^N\bar{Z}_t  \dto  \int_{0}^{1} \Omega W_{\dn}(s) \otimes \Omega W_{\dn} (s) ds.  
\end{equation*}
Since $T^{-2\dn}\sum_{t=1}^T \hat{P}_K^N\bar{Z}_t\otimes \hat{P}_K^N\bar{Z}_t = \hat{P}_K \left(T^{-2\dn}\sum_{t=1}^T P\bar{Z}_t\otimes P\bar{Z}_t  \right) \hat{P}_K$ and $\hat{P}_K \pto P_K$, we conclude that
\begin{equation} \label{pfeq4}
	T^{-2\dn}\sum_{t=1}^T \hat{P}_K^N\bar{Z}_t\otimes  \hat{P}_K^N\bar{Z}_t  \dto  \int_{0}^{1} \Omega W_{\dn}(s) \otimes \Omega W_{\dn} (s) ds. 
\end{equation}
Moreover, from similar arguments used in the proof of Lemma 6(c) in \cite{Nielsen2010}, we may deduce that $\sum_{t=1}^T {P}_K^S\bar{Z}_t\otimes {P}_K^N\bar{Z}_t = O_p(T^{\ds-1/2})$ and thus
\begin{align}\label{pfeq5}
	&T^{-\dn-1/2}\sum_{t=1}^T {P}_K^N\bar{Z}_t\otimes {P}_K^S\bar{Z}_t  \pto 0,\\
	&T^{-\dn-1/2}\sum_{t=1}^T {P}_K^S\bar{Z}_t\otimes {P}_K^N\bar{Z}_t  \pto 0.
\end{align}
Lastly, we deduce the following from the  law of large numbers of stationary ergodic \commWS{sequences}:
\begin{align}\label{pfeq6}
	T^{-1}  \sum_{t=1}^T \hat{P}_K^S\bar{Z}_t\otimes \hat{P}_K^S\bar{Z}_t   \pto \mathbb{E}[{P}_K^SY_t \otimes {P}_K^SY_t]={P}_K \mathbb{E}[(I-P)Y_t \otimes (I-P)Y_t]{P}_K.
\end{align}
Combining \eqref{pfeq4}-\eqref{pfeq6}, we find that
\begin{align}\label{pfeq7}
	D_TA_TD_T \dto  \left(\begin{matrix}
		\int_{0}^{1} \Omega \bar{W}_{\dn}(s) \otimes \Omega \bar{W}_{\dn} (s) ds &  0 \\0 &  {P}_K \mathbb{E}[(I-P)Y_t \otimes (I-P)Y_t]{P}_K
	\end{matrix}\right).
\end{align}
Consider the eigenvalue problem given by  
\begin{equation*}
	\hat{\tau}_j  D_TA_TD_T v_k  =	T^{-2\alpha}  D_TB_TD_T \hat{v}_j, \quad \hat{\tau}_1\geq\hat{\tau}_2\geq\ldots\geq\hat{\tau}_K,
\end{equation*}
\commWS{where $T^{2\alpha} \hat{\nu}_j = \hat{\tau}_j^{-1}$ holds}. Then we know from \eqref{pfeq3a} and \eqref{pfeq7} that $\hat{\tau}_j \pto 0$ if $j >q_{\dnn}$ while $(\hat{\tau}_1,\ldots,\hat{\tau}_{q_{\dnn}})$ converge to the eigenvalues of $\mathcal A_{\dn}^{-1} \mathcal A_{\dn+\alpha}$, where  
\begin{equation}\label{eqA}
	\mathcal A_{\dn+\alpha} = \int_{0}^{1} \Omega \tilde{W}_{\dn+\alpha}(s) \otimes \Omega \tilde{W}_{\dn+\alpha} (s) ds, \quad  \mathcal A_{\dn} = \int_{0}^{1} \Omega \bar{W}_{\dn}(s) \otimes \Omega \bar{W}_{\dn} (s) ds.
\end{equation}
From these results, we find that $T^{2\alpha}(\hat{\nu}_{1},\ldots,\hat{\nu}_{q_{\dnn}})$ converge in distribution to the eigenvalues of $\mathcal A_{\dn+\alpha}^{-1}\mathcal A_{\dn}$; moreover, we know from the properties of the eigenvalues that these eigenvalues are distributionally equivalent to those of~\eqref{eqprop4} (see also Remark 5 of \cite{Nielsen2019}).
\end{proof}
\begin{proof}[Proof of Corollary~\ref{cor1prop4}]
The desired result immediately follows from Proposition~\ref{prop4}. The details are omitted. 
\end{proof}
\begin{proof}[Proof of Corollary~\ref{cor2prop4}]
In our proof of Proposition~\ref{prop4}, we showed that the first $q_{\dnn}$ eigenvalues, multiplied by $T^{2\alpha}$, converge to the eigenvalues of $\mathcal A_{\dn}^{-1} \mathcal A_{\dn+\alpha}$ (see \eqref{eqA}). From this result and the fact that the eigenvalues of $\mathcal A_{\dn}^{-1} \mathcal A_{\dn+\alpha}$ are almost surely distinct from each other, we may deduce that the corresponding eigenvectors converge to those of  $\mathcal A_{\dn}^{-1} \mathcal A_{\dn+\alpha}$ (Lemma 4.3 of \cite{Bosq2000}). This completes the proof.  
\end{proof}

\begin{proof}[Proof of Proposition~\ref{prop6}]
We first note that  $I-\overline{P} \pto I-{P}$ and, from the asymptotic result given in Proposition 2(i) of \cite{Li2021} (see also Proposition 2 of \cite{Li2020a}), we know that $h^{-2\ds}(I-P)\hat{\Lambda}(I-P) \pto \Lambda$ of rank $q_{\dss}$, whose eigenvectors span $\ran Q$. Combining all these results, we observe that 
\begin{equation*}
	h^{-2\ds}(I-\overline{P}) \hat{\Lambda}(I-\overline{P}) \pto \Lambda
\end{equation*}
and thus find that $(\hat{\mu}_1,\ldots, \hat{\mu}_{q_{\dss}})$ and the associated eigenvectors $(\hat{v}_1,\ldots, \hat{v}_{q_{\dss}})$ satisfy the following:
\begin{align}
	h^{-2\ds} \hat{\mu}_j  &\pto   \text{$j$-th largest eigenvalue of } \Lambda \text{ if $j \leq q_{\dss}$}, \label{eqde4}\\
	\hat{Q}_0 := \sum_{j=1}^{q_{\dss}} \hat{v}_j \otimes \hat{v}_j  &\pto  Q. \label{eqde5}
\end{align}
We next note that $(\hat{v}_{q_{\dss}+1},\ldots, \hat{v}_{K})$ are the eigenvectors of $(I-\hat{Q}_0)(I-\overline{P}) \hat{\Lambda}(I-\overline{P})(I-\hat{Q}_0)$ and 
\begin{align*}
	&(I-Q)(I-P)\hat{\Lambda}(I-P)(I-Q)  \\ &= \sum_{s=-T+1} ^{T-1}  \left (1-\frac{|s|}{h}\right ) (I-Q)(I-P)\hat{C}_s(I-P)(I-Q)  \pto \Lambda_0,
\end{align*}
where the convergence in probability follows from Theorem 4.2 of \cite{Hoermann2010} and that $(I-Q)(I-P)\hat{\Lambda}(I-P)(I-Q)$ is the sample long-run covariance operator of the SRD component. Combining this result with~\eqref{eqde5} and the fact that $I-\overline{P} \pto I-{P}$, we find that 	 
\begin{equation*}
	(I-\hat{Q}_0)(I-\overline{P}) \hat{\Lambda}(I-\overline{P})(I-\hat{Q}_0) \pto \Lambda_0.
\end{equation*}
This implies that $(\hat{\mu}_{q_{\dss}+1},\ldots,\hat{\mu}_{K})$ converge to the eigenvalues of $\Lambda_0$ (see Lemma 4.2 of \cite{Bosq2000}). Combining this with~\eqref{eqde4}, the desired results are obtained.
\end{proof}
\begin{proof}[Proof of Corollary~\ref{cor3}]
The desired result immediately follows from Proposition~\ref{prop6}. The details are omitted. 
\end{proof}	
\section{Local Whittle estimation} \label{sec_appen_d}
\subsection{A brief introduction to the local Whittle estimator}\label{sec_lw}
\commRV{
We first briefly introduce the local Whittle estimator. More detailed discussion can be found in e.g., \cite{Li2021} and \cite{Li2020}. Let $x_t$ be a univariate I($d$) process with the spectral density $f_x\sim G \delta^{-2d}$ in a vicinity of the origin; for our purpose, it will be sufficient to deal with the cases with $d \in (-1/2, 1/2)$ and $d \in (1/2, 3/2)$. We consider the following Gaussian objective function:
\begin{align}
\mathcal Q (G,d) = \frac{1}{m} \sum_{i=1}^m \left\{ \ln(G \delta_i^{-2d}) + \frac{I_x(\delta_i)}{G(\delta_i^{-2d})} \right\}, \label{eqobjective}
\end{align} 
where $I_x(\delta_i)$ denotes the sample periodogram defined by the square of discrete Fourier transform of the scores, $\delta_i = 2\pi i/ T$, $i=1,\ldots,m$ and $m$ is a positive integer satisfying $m=o(T)$; customarily, the choice $m=1+\lfloor T^{0.65} \rfloor$ can be used as in \cite{Li2020}. Let $(\hat{G},\hat{d})$ be the minimizer of \eqref{eqobjective} such that 
\begin{align}
(\hat{G},\hat{d}) = \underset{G \in (0,\infty), d \in [\Delta_1, \Delta_2]}{\text{arg\,min}}\mathcal Q (G,d),
\end{align} 
where $-1/2 < \Delta_1 < \Delta_2$ and $\Delta_2 < 1/2$ if we consider the case $d \in (-1/2, 1/2)$ and $\Delta_2 < \infty$ otherwise. We call $\hat{d}$ be the local Whittle estimator.   
}

\subsection{Statistical inference on $\dn$ and $\ds$.}
We employ the following assumptions associated with time series satisfying Assumption~\ref{assum1} and for an element $v \in \mathcal H$.
\begin{assumpLW}\label{assumlw}  
$\psi_j$  and  the spectral density  $f_v(\lambda)$  of the time series $\langle X_t, v \rangle$ for $v \in \mathcal H$ satisfy the following:
\begin{enumerate}[(i)]
	\item\label{assumsr0} $\psi_j = \phi_j A$ for $\phi_j \in \mathbb{R}$ and $A\in \mathcal L_{\mathcal H}$.
	\item\label{assumsr1} $\mathbb{P}(v \in \mathcal H_S) = 0$.	
	\item\label{assumsr2} $f_v(\lambda) $ is differentiable in a neighborhood of zero, and $\frac{d}{d\lambda}\log f_{v}(\lambda) = O(\lambda^{-1})$ as $\lambda \to 0+$.		
\end{enumerate}
\end{assumpLW}	
For convenience we let $\Delta_1^S$, $\Delta_2^S$, $\Delta_1^N$ and  $\Delta_2^N$ be real numbers satisfying that 
\begin{equation*}
-1/2 < \Delta_1^S < \Delta_2^S < 1/2 \quad \text{and}\quad -1/2 < \Delta_1^N < \Delta_2^N < \infty.
\end{equation*}
We will consider the local Whittle estimator that can be computed from the time series $Z_t^0, \Delta Z_t$ or $\bar{Z}_t$ on the range of admissible values given by $[\Delta_1^S,\Delta_2^S]$ or $[\Delta_1^N,\Delta_2^N]$ depending on the context. 

\subsubsection{Inference on $\dn$}\label{sec_appen_d1}

We first establish the following:
\begin{proposition} \label{prop1} 
Suppose that Assumptions~\ref{assum1} and~\ref{assumlw} hold. Then the following holds as $1/m + m/T \to 0$. 
\begin{enumerate}[(i)]	
	\item\label{prop1a} For $\dn \in (1/2,1)$ and $\dn \in [\Delta_1^N,\Delta_2^N]$,  $\hat{d}_{LW} (\langle	{Z}_t^0 ,v \rangle) \pto \dn.$
	\item\label{prop1b} For $\dn \in [1,3/2)$ and $\dn \in [\Delta_1^N,\Delta_2^N]$, $\hat{d}_{LW} (\langle 	{Z}_t^0 ,v \rangle) \pto 1.$
	\item\label{prop1c} For $\dn \in (1/2,3/2)$ and $\dn-1 \in [\Delta_1^S,\Delta_2^S]$, $\hat{d}_{LW} (\langle 	\Delta{Z}_t ,v \rangle) \pto \dn-1.$
\end{enumerate}	
\end{proposition}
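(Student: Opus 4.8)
The plan is to reduce each of (i)--(iii) to a statement about a scalar (truncated) fractionally integrated process plus an asymptotically negligible perturbation, and then quote the known consistency theory for the local Whittle estimator. The first step exploits Assumption~\ref{assumlw}\ref{assumsr0}: since $\psi_j=\phi_j A$, for every $w\in\mathcal H$ the scalar process $\langle X_t,w\rangle=\sum_{j\ge 0}\phi_j\langle\varepsilon_{t-j},A^{\ast}w\rangle$ is a scalar linear process whose spectral density is $|\Phi(e^{i\lambda})|^2\langle C_\varepsilon A^\ast w,A^\ast w\rangle/(2\pi)$, with $\Phi(z)=\sum_j\phi_j z^j$; its \emph{shape} in $\lambda$ is the same for all $w$, so the smoothness condition $\tfrac{d}{d\lambda}\log f(\lambda)=O(\lambda^{-1})$ of Assumption~\ref{assumlw}\ref{assumsr2} transfers automatically to all projections used below, and Assumption~\ref{assum1}\ref{assum1c} together with I$(0)$-ness forces $A^{\ast}P$ to be injective on $\ran P$ and $\Phi(1)\neq 0$, so $\langle X_t,Pv\rangle$ is a nondegenerate I$(0)$ scalar process whenever $Pv\neq0$, which holds a.s.\ by Assumption~\ref{assumlw}\ref{assumsr1}. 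Using \eqref{eqstart1}--\eqref{eqstart2} I would then record the exact identities $\langle Z_t^0,v\rangle=\Delta_+^{-d}\langle X_t,Pv\rangle+r_t$, where $r_t:=\langle(I-P)Y_t,v\rangle-\langle(I-P)Y_0,v\rangle$ has memory at most $d-b$, and $\langle\Delta Z_t,v\rangle=\Delta_+^{-(d-1)}\langle X_t,Pv\rangle+\Delta^{1-d+b}\langle(I-P)X_t,v\rangle$, using $\Delta\Delta_+^{-d}=\Delta_+^{-(d-1)}$ exactly, the second summand having memory $d-b-1<-1/2$.

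Next I would argue that the perturbation does not affect the local Whittle objective. In both decompositions the perturbation has strictly smaller memory than the leading fractionally integrated term ($d-b<d$ and $d-b-1<d-1$, using $b>0$), so at the Fourier frequencies $\lambda_j=2\pi j/T$, $j\le m$, with $1/m+m/T\to 0$, its periodogram and the cross term are of relative order $O_p(\lambda_j^{2b})$; a routine adaptation of the periodogram bounds underlying the proof of Theorem~1 of \citet{robinson1995gaussian} then shows that the Whittle contrast computed from $\langle Z_t^0,v\rangle$ (resp.\ $\langle\Delta Z_t,v\rangle$) differs from the one computed from $\Delta_+^{-d}\langle X_t,Pv\rangle$ (resp.\ $\Delta_+^{-(d-1)}\langle X_t,Pv\rangle$) by $o_p(1)$ uniformly over the admissible interval, so the two estimators share the same probability limit. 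It then remains to read off the scalar results. For (i), $\Delta_+^{-d}\langle X_t,Pv\rangle$ is a truncated I$(d)$ process with $d\in(1/2,1)$, which is covered by the extension of Robinson's consistency theorem to the nonstationary range (cf.\ \citealp{Li2020} and the references therein), and $d\in[\Delta_1^N,\Delta_2^N]$ places the true value in the admissible set, giving $\hat{d}_{LW}(\langle Z_t^0,v\rangle)\pto d$. For (ii), it is known that the level-based local Whittle estimator of an I$(d)$ process with $d\geq 1$ is inconsistent and converges in probability to the boundary value $1$; hence $\hat{d}_{LW}(\langle Z_t^0,v\rangle)\pto 1$. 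For (iii), $d-1\in(-1/2,1/2)$ is in the stationary range (the truncation in $\Delta_+$ being asymptotically immaterial there), so the standard consistency theorem of \citet{robinson1995gaussian}, with $d-1\in[\Delta_1^S,\Delta_2^S]$, yields $\hat{d}_{LW}(\langle\Delta Z_t,v\rangle)\pto d-1$.

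The main obstacle I anticipate is making the ``negligible perturbation'' step genuinely uniform over the whole admissible parameter interval: one must control the discrepancy between the two Whittle contrasts at every candidate value simultaneously, which requires bounds on $\sum_{j=1}^m\lambda_j^{-2a}|I_{r}(\lambda_j)|$ and the analogous cross-term sums that are uniform in the exponent $a$ over the admissible set. A secondary but essential point is (ii): rather than merely establishing inconsistency one must pin the probability limit to exactly $1$, which rests on the classical pile-up-at-the-boundary behaviour of the local Whittle objective in the nonstationary region and is most efficiently imported from the existing literature. Everything else --- the scalar reduction, the exact algebra of $\Delta\Delta_+^{-d}$, and the fact that the true memory lies in the admissible interval --- is routine.
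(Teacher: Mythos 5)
Your proposal is correct in outline and follows the same basic route as the paper's proof: use Assumption LW(i) to reduce $\langle X_t,v\rangle$ to a scalar linear process $\sum_{j}\phi_j u_{t-j}$ with i.i.d.\ innovations, use LW(ii) to ensure an almost surely nonzero long-run variance so that the projection genuinely carries the order-$d$ component, note that the initialization $Z_t\mapsto Z_t^0$ does not affect the periodogram, and then import known scalar local Whittle asymptotics. Where you differ is in the bookkeeping of the remainder and in the sources invoked. The paper runs no separate negligibility argument for the $(I-P)$-component: it verifies that the projected series satisfies the assumptions of Section 3 of \citet{phillips2004local} and reads (i) and (ii) off their Theorems 3.1 and 3.2 (the exact limit $1$ for $d\in[1,3/2)$ is their Theorem 3.2 --- convergence to unity, not a pile-up at the boundary of $[\Delta_1^N,\Delta_2^N]$, and at $d=1$ the estimator is in fact consistent), and it obtains (iii) from Theorem 3.1 of \citet{SHIMOTSU2006209} applied to the differenced series. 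You instead write out the decomposition into the truncated I$(d)$ term plus a lower-memory perturbation and propose to remove the perturbation from the Whittle contrast by periodogram bounds adapted from \citet{robinson1995gaussian}; this is more transparent about where the $(I-P)$-part goes, but the step you call a ``routine adaptation'' is the actual crux, and for parts (i)--(ii) Robinson's stationary-case machinery is not the right tool --- the uniform control over the admissible set in the nonstationary range is precisely what the Phillips--Shimotsu theory provides (and is what \citet{Li2020} also rely on). So your argument closes once the Robinson-based negligibility step is replaced by, or subsumed into, a verification of the hypotheses of \citet{phillips2004local} and \citet{SHIMOTSU2006209}, which is exactly how the paper proceeds.
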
 
\begin{proof}
First, initialization ($Z_t \mapsto Z_t^0$) does not affect the periodogram. Since $\mathbb{P}(v \notin \mathcal H_S)=0$ and $\rank P\sum_{j=0}^\infty \psi_j = q_{\dnn}$, we find that the long-run variance of $\langle X_t,v\rangle$ is equal to 
$\left \langle v,P\left(\sum_{j=0}^\infty \psi_j\right) C_{\varepsilon} \left(\sum_{j=0}^\infty \psi_j\right)^\ast P v \right\rangle$, which is nonzero almost surely if Assumption~\ref{assumlw}(\ref{assumsr1}) is true. Moreover, under Assumption~\ref{assumlw}(\ref{assumsr0}), we have $\langle X_t, v \rangle = \langle \sum_{j=0}^\infty \psi_j \varepsilon_{t-j},v \rangle = \sum_{j=0}^\infty\langle   \varepsilon_{t-j},\psi_j^\ast v \rangle = \sum_{j=0}^\infty \phi_j u_{t-j}$, where $u_t = \langle   \varepsilon_{t}, A^\ast v \rangle$ which is an i.i.d.\ sequence with mean zero and positive variance. Combining these results with Assumption~\ref{assumlw}(\ref{assumsr2}), one can verify that $\langle Z_t^0,v \rangle$ satisfies all the assumptions employed in Section 3 of \cite{phillips2004local}. Then the desired results~(\ref{prop1a}) and~(\ref{prop1b}) immediately follow from Theorems 3.1 and 3.2 of \cite{phillips2004local}. From similar arguments, we may also deduce~(\ref{prop1c})  from Theorem 3.1 of \cite{SHIMOTSU2006209}.
\end{proof}
Of course, if the time series $\langle Z_t^0,v \rangle$ satisfies some additional conditions employed in \commWS{Section 4 of \cite{phillips2004local}}, we then may establish the asymptotic distribution of $\hat{d}_{LW}(Z_t^0,v)$ for $\dn \in (1/2,1]$. However, as shown by \cite{phillips2004local}, this asymptotic distribution depends on values of $\dn$. A more convenient result is given below:
\begin{proposition}\label{prop2add}
Suppose that Assumptions~\ref{assum1} and~\ref{assumlw} hold with  $f_v(\lambda) = G_v (1+O(\lambda^\beta))$ (as $\lambda \to 0+$) for some $G_v \in (0, \infty)$ and $\beta \in (0,2]$, the power transfer function $\phi(\lambda) = \sum_{j=0}^\infty \phi_j e^{ij\lambda}$ is differentiable around the origin with $\sum_{j \geq M} \phi_j = O(1/\log^{4}(M+1))$ and $\sum_{k\geq M}\sum_{j=0}^\infty \phi_{j}\phi_{j+k} = O(1/\log^{4}(M+1))$ uniformly in $M=1,2,\ldots$. Moreover, assume that 
\begin{equation*}
	\left| \frac{d}{d\lambda} \phi(\lambda) \right| = O(\lambda^{-1}) \quad \text{as $\lambda \to 0+$}.
\end{equation*} 
Then, for $\dn \in (1/2,3/2)$ and $\dn-1 \in [\Delta_1^S,\Delta_2^S]$, we have  
\begin{equation*}
	m^{1/2}(\hat{d}_{LW} \langle \Delta Z_t,v \rangle - (\dn-1)) \dto N(0,1/4).
\end{equation*} 
as $1/m + m^{1+2\beta}(\log m)^2/T^{2\beta} \to 0$ and $T\to \infty$.
\end{proposition}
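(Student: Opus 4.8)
The plan is to reduce the functional problem to a scalar one --- local Whittle estimation of the memory of a truncated (type~II) fractionally integrated process of order $d-1\in(-1/2,1/2)$ whose short-memory input is a linear process driven by i.i.d.\ innovations --- and then to invoke the known asymptotic normality theory for that scalar problem, after checking that the hypotheses of the statement translate exactly into its assumptions. The variance $1/4$ in the conclusion is the usual Fisher-information constant for the scalar memory parameter, and the bandwidth requirement $1/m+m^{1+2\beta}(\log m)^2/T^{2\beta}\to 0$ is precisely the condition balancing the $O((m/T)^{2\beta})$ squared bias against the $O(1/m)$ variance in that theory.

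First I would note, exactly as in the proof of Proposition~\ref{prop1}, that replacing $Z_t$ by $Z_t^0$ (hence removing the intercept $\mu$ and the initialization) does not change the periodogram, so it suffices to analyze $\langle\Delta Y_t,v\rangle$. Using $\mathcal H=\ran P\oplus\mathcal H_S$ and self-adjointness of $P$, I would split $\langle\Delta Y_t,v\rangle=\Delta\langle Y_t,Pv\rangle+\Delta\langle Y_t,(I-P)v\rangle$. By~\eqref{eqstart1}, $\langle Y_t,Pv\rangle-\langle Y_0,Pv\rangle=\Delta_+^{-d}\langle X_t,Pv\rangle$, so for $t\ge 2$ the constant is annihilated by $\Delta$ and, up to the asymptotically negligible edge effect of $(1-L)\Delta_+^{-d}$, the leading component equals $\Delta_+^{-(d-1)}\langle X_t,Pv\rangle$. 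Under Assumption~\ref{assumlw}\ref{assumsr0} one has $\langle X_t,Pv\rangle=\sum_{j\ge 0}\phi_j w_{t-j}$ with $w_t=\langle\varepsilon_t,A^\ast Pv\rangle$ i.i.d.\ with mean zero and a.s.\ positive variance (this last point follows from Assumption~\ref{assum1}\ref{assum1c} together with $\mathbb P(v\in\mathcal H_S)=0$, as in the proof of Proposition~\ref{prop1}); its spectral density is a constant multiple of $|\phi(\lambda)|^2$, so the smoothness $f_v(\lambda)=G_v(1+O(\lambda^\beta))$ assumed on the spectral density of $\langle X_t,v\rangle$ carries over, while the stated tail conditions on $\{\phi_j\}$ and $|\phi'(\lambda)|=O(\lambda^{-1})$ are exactly the transfer-function regularity conditions needed below.

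Second, I would control the remainder $\Delta\langle Y_t,(I-P)v\rangle$. By~\eqref{eqstart2} this term is stationary with memory $d_\ell-1\le d-b-1<d-1$ for some $d_\ell\in[0,d-b]$, hence strictly less persistent than the leading component because $b>0$. By Cauchy--Schwarz, at the Fourier frequencies $\lambda_j=2\pi j/T$, $j=1,\dots,m$, its periodogram and its cross-periodogram with the leading component are of smaller relative order, and together with the discarded edge effect one shows that they contribute negligibly to the local Whittle objective and to its first two derivatives under the stated bandwidth rate. Consequently, for the purposes of the estimator, $\langle\Delta Z_t,v\rangle$ behaves like a type~II I$(d-1)$ process with $d-1\in(-1/2,1/2)$ whose short-memory input is $\sum_j\phi_j w_{t-j}$ and whose spectral density near the origin is $G_v|\lambda|^{-2(d-1)}(1+O(|\lambda|^\beta))$.

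Given this reduction, the conclusion $m^{1/2}\big(\hat{d}_{LW}\langle\Delta Z_t,v\rangle-(d-1)\big)\dto N(0,1/4)$ would follow from the asymptotic normality theorem for the local Whittle estimator of truncated fractionally integrated processes with a linear short-memory input in the range $(-1/2,1/2)$ --- the same source, \cite{SHIMOTSU2006209}, underlying the consistency statement in Proposition~\ref{prop1}\ref{prop1c} --- which reduces to \citepos{robinson1995gaussian} Theorem~2 in the purely stationary case. I expect the main obstacle to be the second step: verifying that the less-persistent component $\Delta\langle Y_t,(I-P)v\rangle$ and the truncation edge effects are uniformly negligible over the periodogram ordinates $\lambda_1,\dots,\lambda_m$, so that the effective spectral behavior of $\langle\Delta Z_t,v\rangle$ is genuinely that of a type~II I$(d-1)$ process with smoothness exponent $\beta$ and the cited theorem applies without modification.
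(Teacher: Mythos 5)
Your proposal follows essentially the same route as the paper's proof: reduce $\langle \Delta Z_t,v\rangle$ to a type~II fractionally integrated process of order $d-1$ whose short-memory input $\langle X_t,\cdot\rangle=\sum_{j\ge 0}\phi_j u_{t-j}$ is a scalar linear process in i.i.d.\ innovations, check the transfer-function/spectral smoothness and covariance-tail conditions, and invoke Theorem~4.1 of \cite{SHIMOTSU2006209} to get the $N(0,1/4)$ limit under the stated bandwidth rate. You are in fact somewhat more explicit than the paper's brief argument, which subsumes the truncation edge effects and the less-persistent $(I-P)$ component into the assertion that the assumptions of that theorem ``can easily be verified,'' whereas you flag their negligibility as the step requiring verification.
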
 
\begin{proof}
We note that $\langle X_t,v \rangle = \sum_{j=0}^\infty \phi_j u_{t-j}$, where $u_t = \langle \varepsilon_t, Av \rangle$ is an i.i.d.\ sequence. We thus have $\sum_{j \geq M} \mathbb{E}[\langle X_t,v \rangle\langle X_{t+k},v \rangle] = O(1/\log^{4}(M+1))$. Then one can easily verify that all the assumptions employed in \citet[Section 4]{SHIMOTSU2006209} are satisfied, and then we may deduce the desired result from their Theorem 4.1. 
\end{proof}

\subsubsection{Inference on $\ds$}\label{sec_appen_d2}

We then provide our estimation results for $\ds$. In this section, the following preliminary result will be used:  if Assumption~\ref{assum1} holds and  the first $K$ largest eigenvalues of $\mathbb{E}[(I-P){Y}_t\otimes (I-P){Y}_t]$ are distinct, then  $\tilde{v} = \sum_{j=q_{\dnn}+1}^{q_{\dnn}+K} a_j \hat{v}_j$ with $a_{q_{\dnn}+1} \neq 0$ (where $\hat{v}_j$ is defined in Section~\ref{sec_est_d2}) satisfies that 
\begin{equation}\label{eq001a}
\|\tilde{v} - \sgn{\langle \tilde{v}, v \rangle}v\|\pto 0
\end{equation}
for some fixed element $v$ with $(I-P)v \neq 0$. In this section, the asymptotic results given by Proposition 1 and Theorem 1 in \cite{Li2021} are crucial inputs. In this regard, it is worth mentioning that even if $\psi_j = \phi_j I$ is assumed by \cite{Li2021}, unlike in the present paper, their results can be extended to the case where Assumption~\ref{assumlw}(\ref{assumsr0}) is satisfied with only a slight modification.
\begin{proposition}\label{prop5}
Suppose that Assumption~\ref{assum1} holds and the first $K$ largest eigenvalues of $\mathbb{E}[(I-P){Y}_t\otimes (I-P){Y}_t]$ are distinct. Moreover, we suppose that Assumption~\ref{assumlw} holds for $v$ satisfying \eqref{eq001a} and $\ds \in [\Delta^S_1,\Delta^S_2]$.   Then,  as $1/m + m/T \to 0$, 
\begin{equation} \label{eq001ab}
	\hat{d}_{LW}(\langle \bar{Z}_t,\tilde{v} \rangle)  \pto  \ds,
\end{equation}
where $\tilde{v} = \sum_{j=q_{\dnn}+1}^{q_{\dnn}+K} a_j \hat{v}_j$ and $a_{q_{\dnn}+1} \neq 0$.
\end{proposition}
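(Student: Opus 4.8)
The plan is to reduce the claim to consistency of the ordinary scalar local Whittle estimator for a univariate I$(d-b)$ process, transferring the conclusion along \eqref{eq001a} and relying on the periodogram estimates of \cite{Li2021} (adapted, as noted before the statement, from $\psi_j=\phi_j I$ to Assumption~\ref{assumlw}\ref{assumsr0}).

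First I would pin down the fixed limit $v$ and the scalar series it defines. Since $\sum_{j=1}^{q_d}\hat{v}_j\otimes\hat{v}_j\pto P$ by Proposition~\ref{prop3}, and $\hat{v}_{q_d+1},\dots,\hat{v}_{q_d+K}$ are orthogonal to $\hat{v}_1,\dots,\hat{v}_{q_d}$, one gets $P\hat{v}_j\pto0$ for $q_d<j\le q_d+K$, hence $Pv=0$, i.e.\ $v\in\mathcal H_S$; the same block-perturbation bound underlying Proposition~\ref{prop3} moreover gives the rate $\|P\tilde{v}\|=O_p(T^{1/2-d})$ for the nonstationary leakage (the gap between the two blocks of $\hat{C}_{\bar{Z}}$ is of order $T^{2d-1}$ while the off-diagonal block is of order $T^{d-1/2}$). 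Because $v\in\mathcal H_S$, $\langle\bar{Z}_t,v\rangle$ is exactly the sample-demeaned version of $\langle Y_t,v\rangle=\Delta^{-(d-b)}\langle X_t,v\rangle$, and under Assumption~\ref{assumlw}\ref{assumsr0} one has $\langle X_t,v\rangle=\sum_j\phi_j u_{t-j}$ with $u_t=\langle\varepsilon_t,A^\ast v\rangle$ i.i.d., a short-memory scalar linear process whose spectral density is positive at the origin and satisfies Assumption~\ref{assumlw}\ref{assumsr2}. Hence $\langle Y_t,v\rangle$ is genuinely I$(d-b)$ with $d-b\in[\Delta_1^S,\Delta_2^S]\subset(-1/2,1/2)$, so standard scalar local Whittle consistency (cf.\ \citealp{robinson1995gaussian}; \citealp{Li2021}, Proposition~1) gives $\hat{d}_{LW}(\langle\bar{Z}_t,v\rangle)\pto d-b$, the demeaning being harmless since it leaves the periodogram at the Fourier frequencies $\lambda_j$, $j\ge1$, unchanged.

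The remaining and main step is to pass from the fixed $v$ to the estimated $\tilde{v}$. With $\sigma=\sgn\langle\tilde{v},v\rangle$ and using sign-invariance of the estimator, write $\langle\bar{Z}_t,\tilde{v}\rangle=\sigma\langle\bar{Z}_t,v\rangle+\langle P\bar{Z}_t,P\tilde{v}\rangle+\langle(I-P)\bar{Z}_t,(I-P)(\tilde{v}-\sigma v)\rangle$. The last term has periodogram $O_p(\|(I-P)(\tilde{v}-\sigma v)\|^2\lambda_j^{-2(d-b)})=o_p(\lambda_j^{-2(d-b)})$ by \eqref{eq001a}, hence is negligible in the local Whittle contrast by the usual continuity argument. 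The delicate term is the nonstationary leakage $\langle P\bar{Z}_t,P\tilde{v}\rangle$: it is a scalar I$(d)$ series scaled by $\|P\tilde{v}\|=O_p(T^{1/2-d})$, so its periodogram at $\lambda_j$ is of order $T^{1-2d}\lambda_j^{-2d}$, which actually dominates the I$(d-b)$ signal $\asymp\lambda_j^{-2(d-b)}$ at the lowest Fourier frequencies; one must show that, once summed with the weights $\lambda_j^{2\delta}$ entering the local Whittle objective over $j=1,\dots,m$, this leakage contributes only $o_p(m)$ uniformly in $\delta$ over $[\Delta_1^S,\Delta_2^S]$. This requires matching the bandwidth growth against the rate $T^{1-2d}$ at which the leakage coefficient vanishes, and making it precise --- uniformly in $\delta$ --- is exactly the role of the adapted \cite{Li2021} periodogram bounds. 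Once the contrast built from $\langle\bar{Z}_t,\tilde{v}\rangle$ is shown to converge, uniformly on $[\Delta_1^S,\Delta_2^S]$, to the contrast of a genuine I$(d-b)$ process, whose unique minimiser is $d-b$, the conclusion follows from the standard argmin argument.

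I expect the main obstacle to be precisely this control of the nonstationary leakage $\langle P\bar{Z}_t,P\tilde{v}\rangle$ in the periodogram: it is large at the lowest frequencies and becomes negligible in the averaged contrast only through the growth of the bandwidth, so a careful, $\delta$-uniform periodogram estimate --- adapting the arguments of \cite{Li2021} to the present linear-process setting under Assumption~\ref{assumlw}\ref{assumsr0} --- is where the real work lies; the stationary-perturbation part and the reduction to the scalar problem are comparatively routine.
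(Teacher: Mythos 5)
Your overall strategy is the one the paper uses: show that $\tilde v$ converges (up to sign) to a fixed limit $v$ lying in $\mathcal H_S$ via the eigenvector consistency behind Proposition~\ref{prop3} (Lemma 4.3 of \citealp{Bosq2000}), use that demeaning does not change the periodogram at nonzero Fourier frequencies, identify $\langle \bar Z_t, v\rangle$ as the demeaned version of a stationary scalar I$(d-b)$ linear process $\Delta^{-(d-b)}\langle X_t,v\rangle$ under Assumption~\ref{assumlw}\ref{assumsr0}, apply scalar local Whittle consistency (Proposition 1(i) of \citealp{Li2021}), and reduce everything else to showing that replacing $\tilde v$ by $v$ perturbs the estimator negligibly. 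The paper disposes of that last step by invoking the argument in the proof of Theorem 1 of \cite{Li2021}, after splitting $\tilde v=\tilde v_1+\tilde v_2$ into its components along the LRD and SRD eigenvectors of $\mathbb{E}[(I-P)Y_t\otimes(I-P)Y_t]$ so that the limit direction meets the conditions of their Proposition 1(i).

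The genuine gap is that you do not carry out precisely this replacement step. You isolate the leakage $\langle P\bar Z_t,P\tilde v\rangle$, assign it the rate $\|P\tilde v\|=O_p(T^{1/2-d})$, observe that its periodogram dominates the I$(d-b)$ signal at the lowest Fourier frequencies, and then state that proving its negligibility in the averaged contrast, uniformly in $\delta\in[\Delta_1^S,\Delta_2^S]$, is \emph{where the real work lies} and may require tying the growth of $m$ to $T$; that is, the decisive estimate is flagged rather than proved, and your own heuristic even suggests the conclusion could need a bandwidth lower bound absent from the statement, which only assumes $1/m+m/T\to0$. To close the argument you would either have to establish such a $\delta$-uniform periodogram bound showing the leakage washes out of the local Whittle objective under the stated bandwidth condition alone, or follow the paper's shortcut that negligibility of the whole replacement error follows from $\|\tilde v-\sgn(\langle\tilde v,v\rangle)v\|\pto 0$ via the proof of Theorem 1 of \cite{Li2021} (noting that this cited argument is developed in a purely stationary setting, so it does not by itself address the $\mathcal H_N$-leakage you single out). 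A secondary omission: the claim that $\langle Y_t,v\rangle$ is genuinely I$(d-b)$ (rather than of lower memory) rests on $a_{q_d+1}\neq 0$ together with the eigenvector convergence \eqref{eqpf001} and the conditions of Assumption~\ref{assumlw} for the limit direction — this is exactly why the paper performs the decomposition $\tilde v=\tilde v_1+\tilde v_2$ — whereas you assert positivity of the spectral density at the origin without linking it to the hypothesis $a_{q_d+1}\neq 0$.
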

\begin{proof}[Proof of Proposition~\ref{prop5}]
If the first $K$ largest eigenvalues of $\mathbb{E}[(I-P){Y}_t\otimes (I-P){Y}_t]$ are distinct, we know from Proposition~\ref{prop3} and Lemma 4.3 of \cite{Bosq2000} that $\hat{v}_{q_{\dnn}+j}$ converges to the eigenvector $v_{q_{\dnn}+j}$ corresponding to the $j$-th largest eigenvalue of $\mathbb{E}[(I-P){Y}_t\otimes (I-P){Y}_t]$ in the following sense:
\begin{equation} \label{eqpf001}
	\|\hat{v}_{q_{\dnn}+j} - \sgn(\langle \hat{v}_{q_{\dnn}+j},v_{q_{\dnn}+j}\rangle)v_{q_{\dnn}+j}\| \pto 0, \quad \text{for $j=1,\ldots, \min\{q_{\dss},K\}$}.	 
\end{equation}
As may be deduced from the fact that the periodogram is not affected by demeaning and the proof of Theorem 1  in \cite{Li2021}, replacing the periodogram associated with $\langle \bar{Z}_t, \tilde{v} \rangle$ with that of $\langle \bar{Z}_t, {v} \rangle$  causes only negligible changes if $\|\tilde{v} - \sgn(\langle \tilde{v},v\rangle)v\| \pto 0$, and thus the difference between  $\hat{d}_{LW}(\langle \bar{Z}_t,\tilde{v} \rangle)$ and $\hat{d}_{LW}(\langle \bar{Z}_t,v \rangle)$ becomes negligible.  Note that we may write $\tilde{v} = \tilde{v}_1 + \tilde{v}_2$, where  $\tilde{v}_1 = \sum_{j=q_{\dnn}+1}^{q_{\dnn}+q_{\dss}}  a_j \hat{v}_{j}$ and $\tilde{v}_2=  \sum_{j=q_{\dnn}+q_{\dss}+1}^K a_j \hat{v}_{j}$. \eqref{eqpf001} implies that,  for $k=1$ and $2$, $\|\tilde{v}_k - \sgn(\langle \tilde{v}_k,\bar{v}_k\rangle)\bar{v}_k\| \pto 0$, where $\bar{v}_1=\sum_{j=q_{\dnn}+1}^{q_{\dnn}+q_{\dss}}  a_j {v}_{j}$  and $\bar{v}_2=  \sum_{j=q_{\dnn}+q_{\dss}+1}^K a_j {v}_{j}$. We thus find that $\langle Y_t,v  \rangle$ for $v =\bar{v}_1+\bar{v}_2$ is not only stationary I$(\ds)$ but also satisfies all the requirements for Proposition 1(i) of \cite{Li2021} under Assumption~\ref{assumlw}. We thus conclude that $\hat{d}_{LW}(\langle \bar{Z}_t,v \rangle) \pto \dn$, which completes the proof given that the distance between $\hat{d}_{LW}(\langle \bar{Z}_t,\tilde{v} \rangle)$ and $\hat{d}_{LW}(\langle \bar{Z}_t,v \rangle)$ is negligible.
\end{proof}
Note that \eqref{eq001aa} is a special case of \eqref{eq001ab} when $K=1$. If some additional conditions given by Assumption 2$^\ast$ in \cite{Li2021} hold, the following may also be deduced from the proof of Theorem 1 of \cite{Li2021}:
\begin{equation*} 
m^{1/2}(\hat{d}_{LW}(\langle \bar{Z}_t,\tilde{v} \rangle) - \ds)  \dto N(0,1).
\end{equation*}
A detailed proof of this result is omitted since it is, in fact, similar to that of Proposition~\ref{prop5}; under all of the aforementioned assumptions, one may show that  (i) the time series $\langle Y_t,v \rangle$ becomes an I$(\ds)$ stationary linear process, (ii) $m^{1/2}(\hat{d}_{LW}(\langle \bar{Z}_t,v \rangle) - \ds)  \dto N(0,1)$ (Proposition 1 of \cite{Li2021}) and (iii) replacing $\tilde{v}$ with $v$ only has a negligible impact  (Theorem 1 and Remark 4 of \cite{Li2021}). 

\section{Additional simulation results}
\subsection{Supplementary results}  \label{sec_supplement_table} 
\renewcommand{\arraystretch}{0.9}
\begin{table}[!htb]
\tabcolsep 0.2in
	\caption{Finite sample performance of the estimators of $q_{\dnn}$} \label{tab1add}
	\begin{tabular}{@{\extracolsep{\fill}}lccccc@{}}
		\toprule 
		& \multicolumn{5}{c}{Relative frequency of $\hat{q}_{\dnn} = 3$ or $4$ (the true value + 1)}\\\midrule 
		$q_{\max}$ or $K$ & Method	& $T=200$ & $T=350$ & $T=500$  & $T=1000$  \\ \midrule 
		$4$ 	&{Proposed}	& 0.978& 0.971& 0.960 &0.948\\ 
		&LRS-type &0.348& 0.621 &0.775& 0.924\\ \midrule 
		$5, 6$ 	&Proposed	&0.962 &0.969 &0.960& 0.948 \\ 
		&LRS-type &0.348 &0.621 &0.775& 0.924\\ \midrule
	\end{tabular}
	\begin{justify}
		{\footnotesize Notes: This table provides additional information on the results reported in Table~\ref{tab1}.} 
	\end{justify}
\end{table}  

\begin{table}[!htb]
\tabcolsep 0.15in
	\caption{Finite sample performance of the estimators of $q_{\dnn}$ for different tuning parameters} \label{tab1addrv1}
	\begin{tabular}{@{\extracolsep{\fill}}lcccccc@{}}
		\toprule 
		& &\multicolumn{5}{c}{Relative frequency of correct determination of $q_{\dnn}$}\\\midrule 
	 $\alpha$& 	$q_{\max}$ or $K$ & Method	& $T=200$ & $T=350$ & $T=500$  & $T=1000$  \\ \midrule 
	0.4	&	$4$ &Proposed 	& 0.839& 0.936& 0.949& 0.945\\ 
	&	$5, 6$ 	&Proposed	&0.838 &0.936 &0.949& 0.945 \\ \midrule
	0.6 & $4$	&Proposed 	&0.748& 0.877& 0.910& 0.937\\ 
&	$5$ &Proposed 	&0.743& 0.877 &0.910& 0.937\\ 
 	&$6$ &Proposed 	& 0.742& 0.877& 0.910& 0.937\\
\midrule
	\end{tabular}
	\begin{justify}
		{\footnotesize Notes: This table provides additional information on the results reported in Table~\ref{tab1}, where the test statistics are computed with $\alpha=0.5$.} 
	\end{justify}
\end{table} 

\subsection{{Size-power properties of the variance-ratio test}} 

In Tables~\ref{tabsizepower}-\ref{tabsizepowerrv2}, we report the size-power properties of the variance-ratio test. 
\renewcommand{\arraystretch}{0.7}
\begin{table}[!htb]
\caption{Size and power of the variance-ratio test, $\alpha=0.5$} \label{tabsizepower}
\begin{tabular*}{1\linewidth}{@{\extracolsep{\fill}}lllcccc@{}}
	\toprule  
	$q_{\max}$	&Method & Hypothesis &  $T=200$ & $T=350$ & $T=500$ & $T=1000$ \\ \midrule
	4&  max-test	  & Size & 0.021 &0.028& 0.038& 0.052\\   
	&	&   Power & 0.781& 0.934& 0.967& 0.997\\ 
	&  trace-test	  & Size & 0.020 &0.025& 0.033& 0.046 \\   
	&	& Power & 0.734& 0.921& 0.968&0.998  \\ \midrule
	5&  max-test	  & Size & 0.020& 0.028 &0.036& 0.051\\   
	&	&   Power &  0.749& 0.928& 0.966& 0.996\\ 
	&  trace-test	  &Size &0.019& 0.024& 0.032& 0.046\\   
	&	&  Power & 0.700& 0.911&0.965 &0.998  \\ \midrule
	6&  max-test	  & Size & 0.018& 0.026& 0.035& 0.051\\   
	&	&   Power &  0.723&0.923& 0.965 &0.996\\ 
	&  trace-test	  & Size & 0.016& 0.024 &0.031& 0.045 \\   
	&	&  Power & 0.667& 0.903& 0.964 &0.998   \\ \bottomrule
\end{tabular*}  
\begin{justify}
{\footnotesize Notes: Based on 2,000 Monte Carlo replications. The reported power is computed by testing $H_0:q_{\dnn}=4$ under the simulation DGP. The tests are implemented based on $\Lambda_{s,\alpha}^0$ (max-test) and $\Lambda_{s,\alpha}^1$ (trace-test), respectively, with $\alpha = 0.5$,  $K =q+2$ and significance level $\eta = 0.05$.} 
\end{justify}
\end{table}

\begin{table}[!htb]
\caption{Size and power of the variance-ratio test, $\alpha=0.4$} \label{tabsizepowerrv1}
\begin{tabular*}{1\linewidth}{@{\extracolsep{\fill}}lllcccc@{}}
	\toprule  
	$q_{\max}$	&Method & Hypothesis &  $T=200$ & $T=350$ & $T=500$ & $T=1000$ \\ \midrule
	4&  max-test	  & Size & 0.024 &0.032 &0.041& 0.053\\   
	&	&  Power & 0.832& 0.964& 0.988& 0.999\\ 
	&  trace-test	  & Size & 0.020& 0.028& 0.034& 0.047 \\   
	&	& Power & 0.753& 0.940& 0.983& 0.999  \\ \midrule
	5&  max-test	  & Size & 0.022& 0.030& 0.040 &0.053 \\   
	&	&   Power &   0.803& 0.960 &0.987& 0.999\\ 
	&  trace-test	  & Size & 0.018 &0.027& 0.032& 0.046 \\   
	&	&  Power & 0.725& 0.934& 0.981& 0.999  \\ \midrule
	6&  max-test	  & Size & 0.020 &0.030& 0.039& 0.053\\   
	&	&  Power &   0.777& 0.953& 0.985& 0.999\\ 
	&  trace-test	  & Size & 0.017& 0.026& 0.032& 0.046 \\   
	&	&  Power & 0.700 &0.928& 0.980& 0.999   \\ \bottomrule
\end{tabular*}  
\begin{justify}
{\footnotesize Notes: The tests are implemented as in Table~\ref{tabsizepower}, but with $\alpha=0.4$.} 
\end{justify}
\end{table}

\begin{table}[!htb]
\caption{Size and power of the variance-ratio test, $\alpha=0.6$} \label{tabsizepowerrv2}
\begin{tabular*}{1\linewidth}{@{\extracolsep{\fill}}lllcccc@{}}
	\toprule  
	$q_{\max}$	&Method & Hypothesis &  $T=200$ & $T=350$ & $T=500$ & $T=1000$ \\ \midrule
	4&  max-test	  & Size & 0.020& 0.030& 0.035& 0.050 \\   
	&	& Power & 0.733& 0.890 &0.944& 0.986\\ 
	&  trace-test	  & Size & 0.020& 0.026& 0.033& 0.048 \\   
	&	&  Power & 0.700 &0.884& 0.946& 0.994  \\ \midrule
	5&  max-test	  & Size & 0.020& 0.028 &0.034 &0.049  \\   
	&	&    Power &  0.697& 0.875 &0.937& 0.986\\ 
	&  trace-test	  &Size &0.018 &0.025& 0.032& 0.048  \\   
	&	&  Power & 0.664 &0.874& 0.942& 0.992  \\ \midrule
	6&  max-test	  & Size & 0.018 &0.027& 0.034 &0.049 \\   
	&	&   Power &  0.658& 0.865& 0.931& 0.985\\ 
	&  trace-test	  & Size & 0.017&0.023& 0.032& 0.048  \\   
	&	& Power & 0.621& 0.864& 0.938& 0.992   \\ \bottomrule
\end{tabular*}  
\begin{justify}
{\footnotesize Notes: The tests are implemented as in Table~\ref{tabsizepower}, but with $\alpha=0.6$.} 
\end{justify}
\end{table}


\subsection{Sensitivity analysis and coverage performance of the Local Whittle estimators} \label{sec_sensitivity} 
In Tables~\ref{tab3a} and~\ref{tab4a}, we study sensitivity analysis of the local Whittle estimators. The accuracy of the confidence intervals of the memory parameters is documented in Table~\ref{tab5a}. 
\renewcommand{\arraystretch}{0.9}
\begin{table}[!htb]
\caption{Finite-sample performance of the Local Whittle estimators of $d$} \label{tab3a}
\begin{tabular*}{1\linewidth}{@{\extracolsep{\fill}}lcccccc@{}}
	\toprule  
	$m=\lfloor1+ T^{0.6} \rfloor$ &	Method & & $T=200$ & $T=350$ & $T=500$ & $T=1000$ \\ \midrule 
	Mean Bias &	Proposed&	&-0.0496&-0.0349& -0.0258& -0.0112
	\\  
	&	LRS-type &&   -0.1298& -0.0874& -0.0645& -0.0344
	\\ \midrule 
	Variance &	Proposed&	&   0.0125& 0.0078 &0.0063& 0.0036
	\\  
	&	LRS-type &&   0.0246& 0.0148& 0.0106& 0.0050
	\\ \midrule 
	MSE &	Proposed&	&   0.0149& 0.0090& 0.0070 &0.0038
	\\  
	&	LRS-type && 0.0415& 0.0224& 0.0147& 0.0062
	\\ \midrule  
	$m=\lfloor1+ T^{0.7} \rfloor$ &	Method & & $T=200$ & $T=350$ & $T=500$ & $T=1000$ \\ \midrule 
	Mean Bias &	Proposed&	& -0.0427& -0.0306& -0.0244& -0.0124
	\\  
	&	LRS-type && -0.1061& -0.0672 &-0.0472& -0.0229
	\\ \midrule 
	Variance &	Proposed&	&0.0089& 0.0055& 0.0041& 0.0025\\  
	&	LRS-type && 0.0240 &0.0149& 0.0104& 0.0045
	\\ \midrule 
	MSE &	Proposed&	& 0.0107& 0.0064& 0.0047& 0.0027
	\\  
	&	LRS-type &&    0.0353 &0.0194& 0.0127& 0.0050
	\\ \midrule 
\end{tabular*} 
{\footnotesize Notes: Based on 2,000 Monte Carlo replications. The estimates are computed as in Table~\ref{tab3}.}
\end{table}
\begin{table}[!htb]
\caption{Finite-sample performance of the Local Whittle estimators of $\ds$}\label{tab4a}
\begin{tabular*}{1\linewidth}{@{\extracolsep{\fill}}lcccccc@{}}
\toprule  
$m=\lfloor 1+ T^{0.6} \rfloor$ &	Method & & $T=200$ & $T=350$ & $T=500$ & $T=1000$ \\ \midrule 
Mean Bias &	Proposed&	&-0.1034& -0.0746& -0.0585& -0.0403\\  
&	LRS-type && -0.1590 &-0.1211& -0.1002 &-0.0774
\\ \midrule 
Variance &	Proposed&	&   0.0145 &0.0106& 0.0081 &0.0048
\\  
&	LRS-type & &0.0158& 0.0133& 0.0111& 0.0067
\\ \midrule 
MSE &	Proposed&	&  0.0252& 0.0162& 0.0115& 0.0065
\\  
&	LRS-type &&  0.0410& 0.0279& 0.0212& 0.0126
\\ \midrule 
$m=\lfloor 1+T^{0.7} \rfloor$&	Method & & $T=200$ & $T=350$ & $T=500$ & $T=1000$ \\ \midrule 
Mean Bias &	Proposed&	& -0.0537& -0.0294& -0.0194& -0.0107
\\  
&	LRS-type &&-0.1077 &-0.0662& -0.0498& -0.0325
\\ \midrule 
Variance &	Proposed&	&   0.0110& 0.0067 &0.0046& 0.0025
\\  
&	LRS-type && 0.0149& 0.0096& 0.0068& 0.0035
\\ \midrule 
MSE &	Proposed&	&  0.0139& 0.0076& 0.0050& 0.0026
\\  
&	LRS-type && 0.0264 &0.0140& 0.0093& 0.0046	
\\ \midrule 
\end{tabular*} 
\begin{justify}
{\footnotesize Notes: Based on 2,000 Monte Carlo replications. The estimates are computed as in Table~\ref{tab4}.} 
\end{justify}
\end{table}

\begin{table}[!htb]
\caption{Coverage performance of the pointwise confidence intervals of the memory parameter estimated by the local Whittle estimators with the 80\% nominal level}\label{tab5a}
\begin{tabular*}{1\linewidth}{@{\extracolsep{\fill}}lcccccc@{}}
\toprule  
& \multicolumn{6}{c}{Coverage probability differences} \\\midrule
$m$ &	Target & Method& $T=200$ & $T=350$ & $T=500$ & $T=1000$ \\ \midrule 
$\lfloor 1+ T^{0.6} \rfloor$ &	$d_{\dnn}$&Proposed	&0.1450& 0.0960&  0.0740 &0.0590
\\  
&&LRS-type	&0.3500& 0.2870&0.2515&  0.2015		\\  
&	$d_{\dss}$ & Proposed & 0.2545&0.1990& 0.1485& 0.1175
\\ 
&&LRS-type	& 0.4310& 0.3370	 	&  0.3135	 	 & 0.2950
\\  
\midrule 
$\lfloor 1+ T^{0.65} \rfloor$ & $d_{\dnn}$&Proposed	&0.1425& 0.1075& 0.0890& 0.0655
\\  
&&LRS-type	& 0.3295&  0.2715&  0.2385 &  0.1885
\\  
&	$d_{\dss}$ &Proposed&0.2080& 0.1345& 0.1080 & 0.0600
\\ 
&&LRS-type	&0.3765 & 0.2800 & 0.2405&  0.2080	
\\  
\midrule 
$\lfloor 1+ T^{0.7} \rfloor$ &	$d_{\dnn}$&Proposed	&0.1555&  0.1490& 0.1370 & 0.1125
\\  
&&LRS-type	&   0.3100&   0.2775		&0.2495&  0.2135
\\  
&$d_{\dss}$ &Proposed& 0.1820&  0.1370&0.0855&  0.0445
\\
&&LRS-type	& 0.3505& 0.2375&  0.2015&  0.1650
\\ \midrule 
& \multicolumn{6}{c}{Interval scores} \\\midrule
$m$ &	Target & Method& $T=200$ & $T=350$ & $T=500$ & $T=1000$ \\ \midrule 
$\lfloor 1+ T^{0.6} \rfloor$ &	$d_{\dnn}$&Proposed	&  0.5109&0.3944&0.3427  &0.2601
\\  
&&LRS-type	& 1.1273& 0.8195& 0.6354&0.4037	\\  
&	$d_{\dss}$ & Proposed & 0.6528&0.4961 & 0.4069 &0.2959 
\\ 
&&LRS-type	&0.9718&  0.7610& 0.6417   &0.4745\\  
\midrule 
$\lfloor 1+ T^{0.65} \rfloor$ & $d_{\dnn}$&Proposed	& 0.4577 &0.3493 & 0.3014&  0.2249
\\  
&&LRS-type	&  1.0650&  0.7695&  0.5995 &0.3698
\\  
&	$d_{\dss}$ &Proposed&0.5303&   0.3798& 0.3132  &  0.2227
\\ 
&&LRS-type	& 0.8501 & 0.5988&   0.4844 & 0.3288	
\\  
\midrule 
$\lfloor 1+ T^{0.7} \rfloor$ &	$d_{\dnn}$&Proposed	& 0.4292& 0.3385 & 0.2863 &   0.2244 	\\  
&&LRS-type	&1.0493& 0.7841 &  0.6212& 0.3971
\\  
&$d_{\dss}$ &Proposed&0.4629&  0.3272&  0.2559&    0.1814 
\\
&&LRS-type	& 0.7614&0.5001& 0.3885 &0.2580 
\\ \midrule 
\end{tabular*} 
\begin{justify}
{\footnotesize Notes: Based on 2,000 Monte Carlo replications. The estimates are computed as in Table~\ref{tab5}, and the reported number in each case is computed as the absolute value of the difference between the computed coverage rate and the nominal level 0.8. The interval score in each case is computed with the quantiles 0.1 and 0.9.}
\end{justify}
\end{table}
 


\newpage
\bibliographystyle{apalike}
\bibliography{FCIFTS_ref}	

\end{document}